\documentclass[12pt]{amsart}

\usepackage{amsmath,amsthm,amsfonts,amssymb}
\usepackage[margin=1in]{geometry}    
\usepackage{parskip}  

\newtheorem{definition}{Definition}[section]
\newtheorem{theorem}{Theorem}[section]
\newtheorem{lemma}{Lemma}[section]

\newtheorem{prop}{Proposition}[section]
\newtheorem{remark}{Remark}[section]
\newtheorem{hypothesis}{Hypothesis}[section]
\numberwithin{equation}{section}

\newcommand{\R}{\mathbb{R}}
\newcommand{\Z}{\mathbb{Z}}

\newcommand{\N}{\mathbb{N}}

\newcommand{\E}{\mathbb{E}}

\makeatletter
\@namedef{subjclassname@2020}{%
  \textup{2020} Mathematics Subject Classification}
\makeatother

\begin{document}

\title[Small Ball Probabilities for SPDE on Compact Manifolds]{Small Ball Probabilities for the Stochastic Heat Equation on Compact Manifolds}
\author{Jiaming Chen}
\address{Dept. of Mathematics
\\Imperial College
\\London, UK}
\email{j.chen1@imperial.ac.uk}
\keywords{Small ball probabilities, stochastic heat equation, compact manifolds.}
\subjclass[2020]{Primary, 60H15; Secondary, 58J65.}
\begin{abstract}
We consider the stochastic heat equation on a compact smooth Riemannian manifold without boundary satisfying
\begin{equation*}
\partial_tu(t,x)=\frac{1}{2}\Delta_Mu(t,x)+\sigma(t,x,u)\dot{W}(t,x),\quad (t,x)\in\mathbb{R}_+\times M,
\end{equation*}
where $\dot{W}$ is a centered Gaussian noise that is white in time and colored in space. Assuming that $\sigma$ is Lipschitz in $u$ and uniformly bounded, we estimate small ball probabilities for the solution $u$ when $u(0,x)\equiv 0$.
\end{abstract}
\maketitle

\section{Introduction}
Stochastic partial differential equations (SPDEs) of parabolic type, and in particular the stochastic heat equation, form a central class of models in probability theory. In the classical Euclidean setting, the stochastic heat equation
\begin{equation}
\partial_t u(t,x)=\frac{1}{2}\Delta u(t,x)+\sigma(t,x,u)\dot{W}(t,x)
\end{equation}
has been studied extensively using stochastic convolution techniques and the theory of stochastic evolution equations in Hilbert spaces (see \cite{walsh1986anintroductiontostochastic} and \cite{da2014stochastic} for details). These foundational works establish existence, uniqueness, and regularity of solutions under various assumptions on the nonlinearity and the driving noise. More recently, fine properties of solutions, including intermittency, sample path regularity, and tail behavior, have attracted significant attention (see \cite{khoshnevisan2025points, guo2025sample, chen2025global, chen2025parabolic, khoshnevisan2024small, hu2025spatio, qian2026temporal}).

In many applications, however, the underlying state space naturally carries a nontrivial geometric structure. This motivates the study of stochastic heat equations on Riemannian manifolds. Let $M$ be a $d$-dimensional compact smooth Riemannian manifold without boundary. We consider the stochastic heat equation
\begin{equation}
\label{she}
\partial_t u(t,x)=\frac{1}{2}\Delta_M u(t,x)+\sigma(t,x,u)\dot{W}(t,x),\quad (t,x)\in \R_+\times M,
\end{equation}
with deterministic initial condition $u(0,\cdot)=u_0:M\to\R$, where $\Delta_M=\mathrm{div}\circ\mathrm{grad}$ denotes the Laplace-Beltrami operator. A random field $\{u(t,x)\}_{(t,x)\in\R_+\times M}$ is said to be a mild solution to \eqref{she} if it satisfies
\begin{equation}
\label{sol}
u(t,x)=\int_M P_t(x,y)u_0(y)dy+\int_{[0,t]\times M}P_{t-s}(x,y)\sigma(s,y,u(s,y))W(dsdy),
\end{equation}
where $P_t(x,y)$ denotes the heat kernel on $M$. Precise estimates on $P_t$ can be obtained under geometric assumptions such as lower bounds on the Ricci curvature (see \cite{10.1007/BF02399203} and \cite{davies1989pointwise}). These analytic properties play a crucial role in the study of SPDEs on manifolds. The stochastic integral in \eqref{sol} is understood in the It\^o-Walsh sense.

The It\^o-Walsh solution theory for \eqref{she} is well developed in spatial dimension $d=1$ when $\dot{W}$ is space-time white noise. In higher dimensions, however, white noise becomes too singular, and it is well known that no function-valued solution exists for the stochastic heat equation driven by space-time white noise when $d\geq2$.

To study the equation in higher dimensions, it is therefore natural to regularize the noise in the spatial variable. In the Euclidean setting $\R^d$, Dalang \cite{dalang1999extending} extended Walsh’s theory to $d\geq2$ by considering noises that are white in time and colored in space, with homogeneous spatial covariance $G(x,x')=G(x-x')$. A necessary and sufficient condition for the existence of a random field solution is
\begin{equation*}
    \int_{\R^d}\frac{\hat{G}(d\xi)}{1+|\xi|^2}<\infty,
\end{equation*}
where $\hat{G}$ denotes the Fourier transform of $G$. This condition, commonly referred to as Dalang’s condition, can be interpreted as a regularity requirement on the noise.

In contrast to the Euclidean case, where Fourier analysis provides a natural tool via Bochner’s theorem, the construction on a compact manifold relies on the spectral decomposition of the Laplace-Beltrami operator, which plays the role of a Fourier transform in this geometric setting. The statement below is to construct an intrinsic family of Gaussian noises on compact Riemannian manifolds that are smoother than white noise and allow the study of \eqref{she} within the It\^o-Walsh framework. 

Let $L^2(M)$ denote the space of square-integrable functions on $M$, equipped with the inner product $\langle\cdot,\cdot\rangle$. Let $\{\phi_n\}_{n\geq0}$ be an orthonormal basis of $L^2(M)$ consisting of eigenfunctions of $-\Delta_M$, with corresponding eigenvalues $0=\lambda_0<\lambda_1\leq\lambda_2\leq\cdots$. Thus, $-\Delta_M\phi_n=\lambda_n\phi_n$, and for any $\varphi\in L^2(M)$ there exists a unique expansion
\begin{equation}
\label{decomp}
\varphi(x)=\sum_{n\geq 0} a_n\phi_n(x).
\end{equation}
In particular, $a_0=m_0^{-1/2}\int_M\varphi dm$, where $m_0$ denotes the volume of $M$.

To define a function-valued solution to \eqref{she}, we introduce a family of spatial Gaussian noises with covariance structure depending on parameters $\alpha,\rho\geq0$. Let $(\Omega, \mathcal{F}, P)$ be a complete probability space and let $W(\varphi)$ and $W(\psi)$ be centered Gaussian random variables for each $\varphi$ and $\psi$ in $L^2(M)$, with covariance
\begin{equation}
\label{cov}
\E[W(\varphi)W(\psi)] = \langle\varphi, \psi\rangle_{\alpha,\rho}:=\rho a_0b_0 + \sum_{n\neq 0} \frac{a_nb_n}{\lambda_n^\alpha},
\end{equation}
where $\{a_n\}$ and $\{b_n\}$ are the coefficients of $\varphi$ and $\psi$ in decomposition \eqref{decomp}, respectively. For $\rho>0$, let $\mathcal{H}^{\alpha,\rho}$ be the completion of $L^2(M)$ with respect to $\langle\cdot,\cdot\rangle_{\alpha,\rho}$.

\begin{remark}
From \eqref{cov}, we have the continuous embeddings $L^2(M)\subset\mathcal{H}^{\alpha,\rho}\subset\mathcal{H}^{\beta,\rho}$ for $0\leq\alpha<\beta$. Moreover, the choice $\rho=1$ and $\alpha=0$ recovers spatial white noise on $M$.
\end{remark}

For $\alpha,\rho>0$, the covariance $\langle\cdot,\cdot\rangle_{\alpha,\rho}$ admits an integral kernel representation. Define
\begin{equation*}
G_{\alpha}(x,y):=\frac{1}{\Gamma(\alpha)}\int_0^\infty t^{\alpha-1} \left(P_t(x,y)-\frac{1}{m_0}\right)dt\text{ and }G_{\alpha,\rho}(x,y):=\frac{\rho}{m_0} + G_{\alpha}(x,y).
\end{equation*}
Using the spectral expansion
\begin{equation*}
    P_t(x,y) = \frac{1}{m_0}+\sum_{n=1}^\infty e^{-\lambda_nt}\phi_n(x)\phi_n(y)
\end{equation*}
and the identity $\int_0^\infty t^{\alpha-1}e^{-\lambda t}dt=\Gamma(\alpha)\lambda^{-\alpha}$, we obtain
\begin{equation*}
    G_{\alpha,\rho}(x,y)=\frac{\rho}{m_0}+\sum_{n=1}^\infty\lambda_n^{-\alpha}\phi_n(x)\phi_n(y).
\end{equation*}
Consequently,
\begin{equation}
\label{covker}
\langle\varphi,\psi\rangle_{\alpha,\rho}=\int_{M^2}\varphi(x)G_{\alpha,\rho}(x,y)\psi(y)m(dx)m(dy).
\end{equation}

\begin{remark}
    Throughout the paper, we assume that Dalang’s condition $\alpha>(d-2)/2$ holds.
\end{remark}

\begin{remark}
The kernel $G_\alpha$ can be viewed as an analogue of the Riesz kernel on $\R^d$. Although $\int_M G_\alpha(x,y),m(dy)=0$ and $G_\alpha$ is not nonnegative, it was shown in \cite{chen2025parabolic} that $G_\alpha$ is bounded below on $M$. Hence, by choosing $\rho$ sufficiently large, the covariance function $G_{\alpha,\rho}$ can be made nonnegative.
\end{remark}

\begin{prop}[Lemma 2.9 in \cite{brosamler1983laws}]
\label{covker_bound}
For any $\alpha>0$, there exists a constant $C_\alpha>0$ such that
\begin{equation*}
\vert G_{\alpha}(x,y)\vert\leq\begin{cases}
    C_{\alpha} &\alpha>d/2\\
    C_{\alpha}(1+\log^-(d(x,y))) &\alpha=d/2\\
    C_{\alpha} d(x,y)^{2\alpha-d} &\alpha<d/2
\end{cases}
\end{equation*}
where $\log^-(z)=\max(0,-\log z)$ and $d(x,y)$ denotes the Riemannian distance on $M$.
\end{prop}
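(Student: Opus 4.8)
The plan is to work directly from the defining integral
$\Gamma(\alpha)\,G_\alpha(x,y)=\int_0^\infty t^{\alpha-1}\bigl(P_t(x,y)-m_0^{-1}\bigr)\,dt$,
splitting it at $t=1$ and feeding in two complementary pieces of heat-kernel information. For small times I will use the classical Gaussian upper bound on a compact manifold without boundary: there are constants $C_1,C_2>0$ with $P_t(x,y)\le C_1 t^{-d/2}\exp\!\bigl(-d(x,y)^2/(C_2 t)\bigr)$ for all $0<t\le 1$ and all $x,y\in M$ (this follows from a parametrix construction, or from Li–Yau-type estimates since $\mathrm{Ric}$ is bounded below on a compact manifold; cf. \cite{10.1007/BF02399203}, \cite{davies1989pointwise}). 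For large times I will use exponential relaxation to equilibrium: from the spectral expansion $P_t(x,y)-m_0^{-1}=\sum_{n\ge1}e^{-\lambda_n t}\phi_n(x)\phi_n(y)$, the bound $e^{-\lambda_n t}\le e^{-\lambda_1(t-1)}e^{-\lambda_n}$ valid for $n\ge1$ and $t\ge1$, and Cauchy–Schwarz, one gets $|P_t(x,y)-m_0^{-1}|\le e^{-\lambda_1(t-1)}\bigl(P_1(x,x)-m_0^{-1}\bigr)^{1/2}\bigl(P_1(y,y)-m_0^{-1}\bigr)^{1/2}\le C_3 e^{-\lambda_1 t}$, where $C_3<\infty$ because $z\mapsto P_1(z,z)$ is continuous on the compact set $M$.

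Granting these, the tail is easy: $\int_1^\infty t^{\alpha-1}|P_t(x,y)-m_0^{-1}|\,dt\le C_3\int_1^\infty t^{\alpha-1}e^{-\lambda_1 t}\,dt$, a finite constant depending only on $\alpha$ and $M$ and independent of $x,y$; it contributes to the ``bounded'' part of every case. For the bulk I bound $|P_t-m_0^{-1}|\le P_t+m_0^{-1}$; the contribution of $m_0^{-1}$ is $m_0^{-1}\int_0^1 t^{\alpha-1}\,dt=(m_0\alpha)^{-1}$, again a uniform constant, and the remaining piece is $\int_0^1 t^{\alpha-1}P_t(x,y)\,dt\le C_1\int_0^1 t^{\alpha-1-d/2}\exp\!\bigl(-r^2/(C_2 t)\bigr)\,dt$ with $r:=d(x,y)$.

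The key computation is the substitution $u=r^2/(C_2 t)$, which turns the last integral into $C_1 C_2^{\,d/2-\alpha}\,r^{2\alpha-d}\int_{r^2/C_2}^{\infty}u^{\,d/2-\alpha-1}e^{-u}\,du$, an incomplete Gamma integral whose behaviour near its lower endpoint splits into exactly the three regimes. If $\alpha>d/2$, then $d/2-\alpha-1<-1$ and $\int_{r^2/C_2}^\infty u^{\,d/2-\alpha-1}e^{-u}\,du\le\int_{r^2/C_2}^\infty u^{\,d/2-\alpha-1}\,du=(\alpha-d/2)^{-1}(r^2/C_2)^{\,d/2-\alpha}$, which cancels the prefactor $r^{2\alpha-d}$ and leaves a constant. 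If $\alpha=d/2$, the integral is $\int_{r^2/C_2}^\infty u^{-1}e^{-u}\,du\lesssim 1+\log^-\!(r)$ (split at $u=1$; the part on $[1,\infty)$ is a constant and the part on $[r^2/C_2,1]$ is at most $\log^-(r^2/C_2)$), while the prefactor is $r^0=1$. If $\alpha<d/2$, the integral is bounded by $\int_0^\infty u^{\,d/2-\alpha-1}e^{-u}\,du=\Gamma(d/2-\alpha)<\infty$, leaving $Cr^{2\alpha-d}$. Adding back the two uniform constants yields the three stated bounds, with one last bookkeeping step in the case $\alpha<d/2$: since $r\le\mathrm{diam}(M)$ and $2\alpha-d<0$, every additive constant is $\le C\,\mathrm{diam}(M)^{\,d-2\alpha}\,r^{2\alpha-d}$ and is absorbed into $C_\alpha\, d(x,y)^{2\alpha-d}$.

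The only part requiring genuine care is the change of variables together with the separate treatment of the incomplete Gamma function in the three exponent regimes, and the absorption of lower-order constants using compactness of $M$; everything else is bookkeeping. The Gaussian upper bound itself I would simply cite rather than reprove, since it is standard for closed manifolds.
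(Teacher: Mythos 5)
Your proposal is mathematically correct. Note, however, that the paper does not actually prove this proposition: it is quoted directly as Lemma~2.9 of Brosamler~\cite{brosamler1983laws} and used as a black box. So there is no ``paper's proof'' to compare against; what you have produced is a clean, self-contained derivation of a result the paper simply cites.

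Your argument is sound at every step. The large-time tail via the spectral gap is correct: from the expansion $P_t(x,y)-m_0^{-1}=\sum_{n\geq1}e^{-\lambda_n t}\phi_n(x)\phi_n(y)$, writing $e^{-\lambda_n t}\leq e^{-\lambda_1(t-1)}e^{-\lambda_n}$ for $t\geq1$ and applying Cauchy--Schwarz gives exactly $e^{-\lambda_1(t-1)}(P_1(x,x)-m_0^{-1})^{1/2}(P_1(y,y)-m_0^{-1})^{1/2}$, and the diagonal is bounded by continuity on the compact $M$; this is consistent with the bound \eqref{heatkernelbound2} that the paper separately invokes. The small-time Gaussian upper bound is standard on a closed manifold, and the substitution $u=r^2/(C_2 t)$ converting $\int_0^1 t^{\alpha-1-d/2}e^{-r^2/(C_2 t)}\,dt$ into $C_2^{\,d/2-\alpha}r^{2\alpha-d}\int_{r^2/C_2}^\infty u^{\,d/2-\alpha-1}e^{-u}\,du$ is computed correctly. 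The three-way case analysis on the incomplete Gamma integral then produces precisely the three bounds: exact cancellation of the $r$-powers for $\alpha>d/2$, a logarithmic divergence from $\int u^{-1}e^{-u}\,du$ near the lower endpoint for $\alpha=d/2$, and $\Gamma(d/2-\alpha)$ for $\alpha<d/2$. One small caveat worth making explicit: in the $\alpha>d/2$ case the substitution degenerates at $r=0$ (you formally get $0\cdot\infty$), but the original integral $\int_0^1 t^{\alpha-1-d/2}\,dt=(\alpha-d/2)^{-1}$ is directly finite there, so the constant bound holds without incident. Your final bookkeeping step absorbing additive constants into $C_\alpha\,d(x,y)^{2\alpha-d}$ using $d(x,y)\leq\mathrm{diam}(M)$ and $2\alpha-d<0$ is the right move and is exactly why compactness matters. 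In short: correct, self-contained, and a useful supplement to a result the paper leaves as an external citation.
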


We now define the noise on $\R_+\times M$ that is white in time and colored in space.
\begin{definition}
\label{color_noise}
Let $\alpha > 0$ and define the following Hilbert space of space-time functions
\[
\mathcal{H}_{\alpha,\rho} := L^2(\R_+;\mathcal{H^{\alpha,\rho}}(M))
\]
equipped with $\langle\cdot,\cdot\rangle_{\mathcal{H}_{\alpha,\rho}}$ such that for any $\varphi,\psi\in\mathcal{H}_{\alpha,\rho}$,
\[
\langle\varphi,\psi\rangle_{\mathcal{H}_{\alpha,\rho}}:=\int_{0}^\infty\langle\varphi(t,\cdot), \psi(t,\cdot)\rangle_{\alpha,\rho}dt.
\]
On a complete probability space $(\Omega, \mathcal{F},P)$, let $W_{\alpha,\rho}$ be a centered isonormal Gaussian process over $\mathcal{H}_{\alpha,\rho}$, i.e.,
\[
\E[W_{\alpha,\rho}(\varphi)W_{\alpha,\rho}(\psi)]=\langle\varphi,\psi\rangle_{\mathcal{H}_{\alpha,\rho}}
\]
for each $\varphi,\psi$ in $\mathcal{H}_{\alpha,\rho}$. We refer to $W_{\alpha,\rho}$ as colored noise on $M$ that is white in time.
\end{definition}

We impose the following assumptions on the noise coefficient $\sigma$.
\begin{hypothesis}
There exists a constant $\mathcal{D}>0$ such that for all $t\geq 0$, $x\in M$ and $u,v\in\R$,
\begin{equation}
\label{hypothesis1}
\vert \sigma(t,x,u)-\sigma(t,x,v)\vert\leq\mathcal{D}|u-v|.
\end{equation}
\end{hypothesis}
\begin{hypothesis}
There exist constants $\mathcal{C}_1$, $\mathcal{C}_2>0$ such that for all $t\geq 0$, $x\in M$ and $u\in\R$,
\begin{equation}
\label{hypothesis2}
\mathcal{C}_1\leq \sigma(t,x,u)\leq \mathcal{C}_2.
\end{equation}
\end{hypothesis}

\begin{remark}
    The first assumption ensures that $\sigma$ is Lipschitz continuous in the solution variable, while the second guarantees uniform nondegeneracy of the noise. Under Hypothesis \eqref{hypothesis1} and Dalang’s condition, it is well known that equation \eqref{she} admits a unique random field solution.
\end{remark}

Small ball problems have been well studied and one can see \cite{li2001gaussian} for an overview of known results on Gaussian processes and references on other processes. In summary, given a stochastic process $X_t$ starting at the origin, the small ball problem concerns the asymptotic behavior of
\[P\left(\sup_{0\leq t\leq T}\vert X_t\vert<\varepsilon\right)
\text{ as } \varepsilon\downarrow 0.\]

In our framework, the solution $u$ is generally non-Gaussian due to the dependence of $\sigma$ on $u$. Small ball probabilities for stochastic heat equations of this type were first studied in \cite{athreya2021small} for space-time white noise. Subsequent extensions include H\"older-type norms \cite{foondun2023small} and diffusion coefficients that are uniformly elliptic but only H\"older continuous in $u$ \cite{han2024support}. These works, however, are restricted to one-dimensional spatial domains due to the singularity of white noise.

More recently, \cite{chen2025small} adapted the approach of \cite{athreya2021small}, combined with Dalang’s framework \cite{dalang1999extending}, to study small ball probabilities for the fractional stochastic heat equation driven by spatially homogeneous noise on the $d$-dimensional torus. Their analysis relies on Fourier methods to control fluctuations of the noise term and employs a different temporal discretization scheme. 

The present work extends the study of small ball probabilities for stochastic heat equations to arbitrary compact smooth Riemannian manifolds. Building on the It\^o-Walsh framework and Dalang’s theory of spatially colored noise, we replace Euclidean Fourier analysis with the spectral decomposition of the Laplace–Beltrami operator, yielding an intrinsic and coordinate-free approach that explicitly reflects the underlying geometry. Within this setting and under the conditions on the noise coefficient, we establish small ball probability estimates for the resulting non-Gaussian solutions, thereby extending the one-dimensional theory developed in \cite{athreya2021small, foondun2023small, han2024support} and the torus-based analysis of \cite{chen2025small} to a broad class of compact manifolds. In particular, our approach allows us to treat the boundary regime $\alpha=d/2$, which is inaccessible in previous works relying on spatial homogeneity or flat geometry. This work can be viewed as an initial step toward a study of small ball probabilities for SPDEs on manifolds.

\section{Main Result}
\begin{theorem}
\label{thm} 
Under the hypothesis \eqref{hypothesis1} and \eqref{hypothesis2} on the noise coefficient $\sigma$, consider the solution to \eqref{she} with initial condition $u_0\equiv 0$. Then there exist positive constants $\textbf{C}_0,\textbf{C}_1,\textbf{C}_2,\textbf{C}_3,\mathcal{D}_0$ and $\varepsilon_0$, independent of $\varepsilon$, such that for all $0<\mathcal{D}<\mathcal{D}_0,0<\varepsilon<\varepsilon_0$ and $T>1$,
                \begin{equation*}
\begin{aligned}
\mathbf{C}_0 \exp\left(-\frac{\mathbf{C}_1 T}{\varepsilon^{\frac{2d+4}{h}}}\right)
&< P\left(\sup_{\substack{0<t\le T \\ x\in M}} |u(t,x)| < \varepsilon\right) \\
&<
\begin{cases}
\mathbf{C}_2 \exp\left(-\dfrac{\mathbf{C}_3 T}{\varepsilon^{\frac{H}{2\alpha+2-d}}}\right)
& \max\left(0,\dfrac{d}{2}-1\right)<\alpha<\dfrac{d}{2} \\[0.8em]
\mathbf{C}_2 \exp\left(-\dfrac{\mathbf{C}_3 T [\ln\ln(1/\varepsilon)]^2}{\varepsilon^{2}}\right)
& \alpha=\dfrac{d}{2} \\[0.8em]
1 & \alpha>\dfrac{d}{2}.
\end{cases}
\end{aligned}
\end{equation*}
        Here $h=\min(1,2\alpha-d+2)$ and $H=\min\left(\frac{6d-4\alpha}{d},4\alpha+8-2d\right)$.
\end{theorem}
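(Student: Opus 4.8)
The plan is to prove both inequalities through a common scheme. Since $u_0\equiv 0$, the mild solution is $u(t,x)=\int_{[0,t]\times M}P_{t-s}(x,y)\sigma(s,y,u(s,y))\,W(ds\,dy)$, and $t\mapsto u(t,\cdot)$ is a Markov process in $C(M)$. I would partition $[0,T]$ into $N\asymp T/\delta$ subintervals $[t_k,t_{k+1}]$ of a common length $\delta$ (a power of $\varepsilon$ to be optimised at the end), and on each subinterval use the decomposition $u(t,\cdot)=P_{t-t_k}u(t_k,\cdot)+v_k(t,\cdot)$ with $v_k(t,x)=\int_{[t_k,t]\times M}P_{t-s}(x,y)\sigma(s,y,u(s,y))\,W(ds\,dy)$. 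Because $P_r$ is a contraction on $C(M)$, the deterministic term $P_{t-t_k}u(t_k,\cdot)$ never enlarges the running sup-norm; but it is precisely the cost of \emph{cancelling} this $O(\varepsilon)$-sized term at every step that drives the exponents. The number of subintervals $N\asymp T/\delta$ is what produces the linear dependence on $T$ in all three regimes, consistent with $T>1$.

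For the lower bound, on the event $\sup_{[0,t_k]\times M}|u|<\varepsilon$ I would freeze the coefficient at the left endpoint, replacing $v_k$ by $\tilde v_k(t,x)=\int_{[t_k,t]\times M}P_{t-s}(x,y)\sigma(s,y,u(t_k,y))\,W(ds\,dy)$, which conditionally on $\mathcal{F}_{t_k}$ is a centred Gaussian field with coefficient in $[\mathcal{C}_1,\mathcal{C}_2]$; by Hypothesis~\eqref{hypothesis1} the discrepancy $v_k-\tilde v_k$ has coefficient of size $O(\mathcal{D}\varepsilon)$ on this event, and it is absorbed using $\mathcal{D}<\mathcal{D}_0$ together with Anderson's inequality. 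The core estimate is a Gaussian small-ball bound with a Cameron–Martin shift: conditionally, $\tilde v_k$ must stay within $\varepsilon$ of the deterministic trajectory $-P_{\cdot-t_k}u(t_k,\cdot)$. The centred small-ball factor is handled by entropy bounds — the canonical metric of $\tilde v_k(t_{k+1},\cdot)$ on $M$ is controlled via Proposition~\ref{covker_bound} and Weyl's law $\lambda_n\asymp n^{2/d}$, yielding spatial Hölder regularity of exponent $h/2$ and covering numbers of order $\varepsilon^{-2d/h}$ at the relevant scale — while the Cameron–Martin factor measures the cost of steering the $O(\varepsilon)$-sized value $u(t_k,\cdot)$ back into a small ball in time $\delta$. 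Taking $\delta\asymp\varepsilon^{4/h}$ balances these against $N\asymp T\varepsilon^{-4/h}$, and multiplying the conditional lower bounds gives $P(\sup|u|<\varepsilon)\ge\mathbf{C}_0\exp(-\mathbf{C}_1 T\varepsilon^{-(2d+4)/h})$. Conceptually this matches the exponent $(2d+4)/(2\alpha+2-d)$ predicted by parabolic scaling $(t,x)\mapsto(\lambda^2 t,\lambda x)$ of \eqref{she} on $\R^d$, which is recovered when $2\alpha+2-d\le 1$; the cap $h=\min(1,2\alpha+2-d)$ reflects the method exploiting only Hölder regularity up to exponent $1/2$.

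For the upper bound in the regime $\alpha\le d/2$, I would bound $P(\sup|u|<\varepsilon)$ above by $P(|u(t_{j+1},x_i)|<\varepsilon\text{ for all }i,j)$ for a well-separated family $x_1,\dots,x_P\in M$ chosen at the noise's roughness scale, and apply conditional anti-concentration: given $\mathcal{F}_{t_j}$, the vector of increments $(u(t_{j+1},x_i)-(P_\delta u(t_j,\cdot))(x_i))_i$ has conditional covariance bounded below — by Hypothesis~\eqref{hypothesis2} and the spectral expansion of $P_t$, the $\lambda_0=0$ mode alone contributes order $\delta$ — so its conditional joint density is bounded, via Malliavin-calculus density estimates for the solution, and $P(|u(t_{j+1},x_i)|<\varepsilon\ \forall i\mid\mathcal{F}_{t_j})\le q(\delta,\varepsilon)$. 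Iterating over $j$ gives $P(\sup|u|<\varepsilon)\le q^N$, and optimising $\delta$ against $N=T/\delta$, with the number of effectively independent modes at scale $\delta$ governed by the Riesz-type bound $|G_\alpha(x,y)|\lesssim d(x,y)^{2\alpha-d}$ from Proposition~\ref{covker_bound} (respectively the $\log^-$ bound when $\alpha=d/2$, which produces the $[\ln\ln(1/\varepsilon)]^2$ correction), yields the exponent $H/(2\alpha+2-d)$. When $\alpha>d/2$ the kernel $G_\alpha$ is bounded and the solution is too regular for this accumulation to improve on the trivial estimate $P(\cdot)<1$, which is all that is asserted there.

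The principal difficulty, common to both directions, is the multiscale balancing: at each time-scale $\delta$ one must correctly count the number and gauge the strength of the effectively independent spatial degrees of freedom of the solution — dictated by the competition between the $\varepsilon^2$-scale temporal fluctuation from the heat semigroup and the spatial regularity encoded by $\alpha$ through Proposition~\ref{covker_bound} and Weyl's law — and then match this against $N=T/\delta$; getting the powers of $\varepsilon$ exactly right in both $\delta$ and the per-step bound is where the work lies. Secondary but still delicate points are, for the lower bound, the Gaussian small-ball estimate with Cameron–Martin shift and the propagation of the small-ball constraint through the Markov chain while absorbing the Lipschitz error (hence the hypothesis $\mathcal{D}<\mathcal{D}_0$), and, for the upper bound, the quantitative bound on the conditional joint density of the non-Gaussian solution at many points simultaneously, for which Malliavin calculus appears essential.
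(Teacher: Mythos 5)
Your overall scheme — temporal discretization into steps $\delta$, the Markov property to telescope over steps, coefficient-freezing to reduce to Gaussian fields, a Cameron--Martin shift plus Anderson's inequality for the lower bound, and conditional anti-concentration for the upper bound — matches the paper in broad outline, and your bookkeeping for the lower-bound exponent $(2d+4)/h$ via $\delta\asymp\varepsilon^{4/h}$ times a per-step cost $\exp(-C\varepsilon^{-2d/h})$ is consistent with the paper's Proposition~\ref{prop}(b). You omit the Gaussian correlation inequality (Lemma~\ref{Gaussiancorr}), which the paper uses to factor the covering-ball probabilities, and the stopping-time device used to handle the freezing error $v_k-\tilde v_k$ (it is a stochastic integral whose integrand involves the unknown solution, so you cannot simply say its ``coefficient is $O(\mathcal{D}\varepsilon)$'' without stopping the process), but these are fixable. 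The upper bound, however, has a genuine gap.

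You argue that the conditional covariance of $(u(t_{j+1},x_i))_i$ at well-separated points is bounded below because ``the $\lambda_0=0$ mode alone contributes order $\delta$.'' That mode contributes the \emph{same} quantity $\tfrac{\rho}{m_0}\cdot O(\delta)$ to \emph{every} entry of the covariance matrix (it is the constant eigenfunction), hence a rank-one perturbation; it gives no lower bound on the smallest eigenvalue once there are many spatial points, which is exactly what your anti-concentration step needs. The real obstruction is that on a compact manifold with $\alpha\geq(d-2)/2$ the spatial correlations of the noise term between $\varepsilon^2$-separated points decay only polynomially (or logarithmically at $\alpha=d/2$, or not at all for $\alpha>d/2$), so the covariance matrix is close to singular at the parabolic time scale $\varepsilon^4$. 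This is precisely why the paper is forced to take the time step $t_1=c_0(\varepsilon)\varepsilon^4$ with $c_0(\varepsilon)\downarrow 0$ as in~\eqref{c0}, and to prove Lemma~\ref{coeffbound} showing the off-diagonal correlation sum can be made small; the explicit exponent $H=\min\bigl(\tfrac{6d-4\alpha}{d},\,4\alpha+8-2d\bigr)$ only emerges from the interplay between this $c_0(\varepsilon)$ shrinkage and the target threshold $\sqrt{f(t_1)}$, which your sketch does not produce. Separately, the paper does not use Malliavin density estimates at all: it decomposes $u=v_g+D$ with $v_g$ a frozen-coefficient Gaussian field, applies Anderson's inequality to $v_g$ over a filtration of nested geodesic balls $R_{1,j}$, and controls the error $D$ (whose coefficient is of size $O(\mathcal{D}\sqrt{f(t_1)})$) by the chaining bound in Lemma~\ref{larged}. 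This more elementary route is also what makes the hypothesis $\mathcal{D}<\mathcal{D}_0$ enter the upper bound, a dependence your Malliavin sketch leaves unexplained.
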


\begin{remark}
Except in the one-dimensional white noise case $(d=1,\alpha=0)$, the exponent of $\varepsilon$ appearing in the upper bound is strictly smaller than that in the lower bound. This discrepancy arises from the fact that the temporal coefficient $c_0$, defined in \eqref{c0}, depends explicitly on $\varepsilon$. Despite this suboptimality, the small ball estimates remain valid in all dimensions for both the upper and lower bounds. 
\end{remark}

\begin{remark}
    When $\alpha>d/2$, the driving noise is bounded and continuous, and its spatial correlations decay to a constant as the distance between two points increases. In this regime, the corresponding small ball probability admits only a trivial upper bound within our framework. 
\end{remark}

We now outline the main ideas underlying the proof of Theorem \ref{thm}. In \cite{athreya2021small}, the authors decomposed a one-dimensional spatial interval into small subintervals. To extend this approach to a $d$-dimensional compact manifold, we instead partition the domain into a collection of nested geodesic balls.

A key technical difficulty concerns the temporal coefficient $c_0$ introduced in \cite{athreya2021small}. In that work, $c_0$ was shown to be uniformly bounded, relying on the exponential decay of spatial correlations as the distance between points increases. In our setting, however, the spatial correlations decay much more slowly, and a uniform bound on $c_0$ is no longer available. Instead, we control $c_0$ in terms of $\varepsilon$, as specified in \eqref{c0}.

A central ingredient of the analysis is the Markov property of \eqref{she} with respect to the time variable $t$. This allows us to reduce the problem to studying the behavior of the (non-Gaussian) solution over short time intervals. To derive the upper bound in Theorem \ref{thm}, we employ a perturbative argument that approximates the solution $u$ by a Gaussian random field in small space-time regions. This approximation is combined with sharp estimates for an associated Gaussian process, whose behavior depends sensitively on the noise parameter $\alpha$. This dependence explains the specific choice of upper bounds in the sequence of events appearing in the next section.

For the lower bound, we apply the Gaussian correlation inequality together with a change-of-measure argument inspired by \cite{athreya2021small}. We further show that the approximation error can be controlled by an appropriate choice of the time intervals over which the coefficients are frozen. A carefully constructed stopping time plays a crucial role here, allowing us to overcome the obstruction that prevents a direct extension of the lower bound argument to dimensions $d>1$.

In particular, it is natural to ask how the present estimates behave as the Riesz kernel parameter approaches its critical value $\alpha=d/2$. This regime is closely related to the weak convergence problem as $\beta\downarrow 0$ studied in \cite{bezdek2016weak}, as well as Remark(a) following Theorem 2.1 in \cite{chen2024small}. Our results complement and extend the Euclidean theory of SPDEs and demonstrate that the entire framework can be naturally generalized to compact Riemannian manifolds, thereby covering a broad class of spaces with rich geometric and topological structure.

The remainder of the paper is organized as follows. In Section \ref{keyprop}, we state Proposition \ref{prop} and explain its relationship to Theorem \ref{thm}. Section \ref{estimates} collects several auxiliary estimates, and Section \ref{proofprop} contains the proof of Proposition \ref{prop}, thereby completing the argument.

We conclude by listing some conventions and notations used throughout the paper.
\begin{enumerate}
\item The symbols $C$ and $C'$ denote positive constants whose values may change from line to line. Dependence on specific parameters is indicated explicitly when relevant.
\item For notational simplicity, we suppress the indices $\alpha$ and $\rho$ and write $W$ in place of $W_{\alpha,\rho}$. We also write $dz$ instead of $m(dz)$ for integration over $M$.
\item $B_M(p,r)$ denotes the geodesic ball in $M$ centered at $p$ with radius $r$.
\item $\mathrm{diam}(M)$ denotes the diameter of the compact manifold $M$.
\end{enumerate}

\section{Key Proposition}
\label{keyprop}
Fix $\varepsilon>0$ and choose a maximal $\varepsilon^2$-separated set $\{x_j\}_{j=1}^{N(\varepsilon)}\subset M$,
that is, $d(x_j,x_k)\geq\varepsilon^2$ for $j\neq k$ and $\bigcup\limits_{j=1}^{N(\varepsilon)}B_M(x_j,\varepsilon^2)=M$. Next, we partition the time interval $[0,T]$ into subintervals of length $c_0\varepsilon^4$ where $c_0=c_0(\varepsilon)$ is a temporal coefficient satisfying
\begin{equation}
\label{c0}
c_0<\begin{cases}
    \exp\left(C\varepsilon^{-2d}\ln(\varepsilon)\right)&\alpha=d/2\\
    C\varepsilon^{\frac{8\alpha}{d-2\alpha}}&\alpha<d/2.
\end{cases}
\end{equation}
The constant $C>0$ defined in \eqref{selectc0} is independent of $\varepsilon$.

\begin{remark}
Unlike the space-time white noise case studied in \cite{athreya2021small}, the coefficient $c_0$ must depend on $\varepsilon$ due to the strong spatial correlations between distant points. As shown in the Lemma \ref{coeffbound}, when $\alpha<d/2$ the correlation sum decays polynomially; when $\alpha=d/2$, it decays logarithmically; and when $\alpha>d/2$ it remains bounded away from zero, in which case the correlation sum cannot be controlled for any choice of $c_0$ as $\varepsilon\downarrow0$. Nevertheless, in our setting, the precise value of $c_0$ does not enter either the upper or lower bounds for the small ball probabilities.
\end{remark}

We denote $t_i=ic_0\varepsilon^4$ and pick a reference point $x_0\in M$ to define the nested sets for $j=1,2,\dots,J(\varepsilon)$,
\begin{equation}
\label{Rij}
R_{i,j}:=\left\lbrace(t_i,x_k):d(x_0,x_k)\leq j\varepsilon^2
    \right\rbrace
\end{equation}
where $J(\varepsilon)\geq\frac{\text{diam}(M)}{\varepsilon^2}>J(\varepsilon)-1$. Thus, $R_{i,j}$ consists of all spatial separation points contained in the geodesic ball $B_M(x_0,j\varepsilon^2)$ at time $t_i$. 

According to Proposition~\ref{covker_bound}, the solution to \eqref{she} exhibits distinct behaviors depending on the value of $\alpha$. We therefore introduce the following function, which will be used throughout the paper:
\begin{equation}
\label{f_t}
        f(t) = \begin{cases}
            t & \alpha>d/2\\
            t\ln(1/t) & \alpha=d/2\\
            t^{\alpha+1-d/2} & \max(0,d/2-1)<\alpha<d/2.
        \end{cases}
    \end{equation}

For $n\geq0$, we define the events
\begin{equation}
\label{Fn}
F_n=\left\lbrace\vert u(t,x)\vert\leq \sqrt{f(t_1)}\text{ for all } (t,x)\in R_{n,J}\right\rbrace,
\end{equation}
which capture the boundedness of the solution at time $t_n$ over all spatial separation points.

In addition, let $E_{-1}=\Omega$ and recall $h=\min(1,2\alpha-d+2)$. For $n\geq0$, we define
\begin{equation}
\label{En}
E_n=\left\lbrace\vert u(t_{n+1},x)\vert\leq \frac{\mathcal{C}_3}{3}t_1^{h/4},\,\text{and }\vert u(t,x)\vert\leq \mathcal{C}_3t_1^{h/4}\text{ for all } t\in[t_n,t_{n+1}),x\in M\right\rbrace.
\end{equation}
This event enforces a uniform bound on the solution over the entire time interval $[t_n,t_{n+1})$, with a stricter constraint at time $t_{n+1}$. The constant $\mathcal{C}_3>0$ is chosen so that
\begin{equation}
\label{c3}
\mathcal{C}_3>6\mathcal{C}_2\sqrt{\frac{\ln C_5}{C_6}},
\end{equation}
where $\mathcal{C}_2$ is the uniform bound of $\sigma$ in \eqref{hypothesis2} and $C_5,C_6$ are positive constants appearing in Lemma \ref{larged}.

\begin{prop}
\label{prop}
Consider the solution to \eqref{she} with the initial condition $u_0\equiv 0$. There exist positive constants $\textbf{C}_4,\textbf{C}_5,\textbf{C}_6,\textbf{C}_7,\mathcal{D}_0$ and $\varepsilon_1$, independent of $\varepsilon$, such that for all $0<\varepsilon<\varepsilon_1$ and $0<\mathcal{D}<\mathcal{D}_0$ in \eqref{hypothesis1}, 
\begin{enumerate}
\item[(a)]
\begin{equation*}
P\left(F_n\bigg| \bigcap_{k=0}^{n-1}F_k\right)\leq\begin{cases}
    \textbf{C}_4\exp\left(-\frac{\textbf{C}_5}{\varepsilon^2}\right)&(d-1)/2<\alpha\leq d/2\\
    \textbf{C}_4\exp\left(-\frac{\textbf{C}_5}{t_1^{\min\left(\frac{d-2\alpha}{2d},\frac{2\alpha+2-d}{2}\right)}}\right)&\max(0,d/2-1)<\alpha\leq(d-1)/2,
\end{cases}
\end{equation*}
\item[(b)] 
\begin{equation*}
P\left(E_n\bigg| \bigcap_{k=-1}^{n-1}E_k\right)\geq \textbf{C}_6\exp\left(-\frac{\textbf{C}_7}{t_1^{d/2}}\right).
\end{equation*}
\end{enumerate}
\end{prop}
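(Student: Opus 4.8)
The two estimates are the per-step ingredients that, telescoped over the partition of $[0,T]$, yield the upper and lower bounds of Theorem \ref{thm}, so both are proved by conditioning on the past and invoking the Markov property of \eqref{she} in time, which turns each into a statement about the solution on a single interval of length $t_1=c_0\varepsilon^4$ started from the datum $u$ at the left endpoint $t_i$ (which, on the conditioning event, is small in $L^\infty$ — in part (a) this uses, besides the $F_k$'s, the a priori spatial regularity estimates of Section \ref{estimates} to pass from control at the $\varepsilon^2$-net to control everywhere). On such an interval we write $u(t,x)=(P_{t-t_i}u(t_i,\cdot))(x)+N(t,x)$, where $N$ is the Walsh integral of $P_{t-s}(x,y)\sigma(s,y,u(s,y))$ over $[t_i,t]\times M$; the deterministic term is controlled by $L^\infty$-contractivity of the heat semigroup, so everything reduces to estimates for $N$. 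The natural size of $N$ over one step is $\sqrt{f(t_1)}$, as the spectral representation of the covariance together with Proposition \ref{covker_bound} and Weyl's law show, and one checks $\sqrt{f(t_1)}\le t_1^{h/4}$; this is why the thresholds in \eqref{Fn} and \eqref{En} are phrased in these terms.

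For part (a), I would estimate $P(F_n\mid\bigcap_{k=0}^{n-1}F_k)$ by exhausting $R_{n,J}$ through the nested shells $R_{n,1}\subset\cdots\subset R_{n,J}$ and writing it as a product over $j$ of
\[
P\Big(\,|u(t_n,x)|\le\sqrt{f(t_1)}\ \text{for }x\in R_{n,j}\ \Big|\ |u(t_n,x)|\le\sqrt{f(t_1)}\ \text{for }x\in R_{n,j-1},\ \mathcal F_{t_{n-1}}\Big).
\]
Each factor is bounded by restricting to one point $x$ of the new shell and bounding below the conditional variance of $N(t_n,x)$ given $\mathcal F_{t_{n-1}}$ and $\{N(t_n,x_k):x_k\in R_{n,j-1}\}$: since $\sigma\ge\mathcal C_1>0$ the total contribution is $\gtrsim f(t_1)$, while the portion "explained" by the already-revealed points is controlled by the correlation sum $\sum_{x_k\in R_{n,j-1}}|\mathrm{Cov}(N(t_n,x),N(t_n,x_k))|\asymp t_1\sum_{x_k}|G_\alpha(x,x_k)|$, which Lemma \ref{coeffbound} estimates; the choice \eqref{c0} of $c_0$ is precisely the requirement that $t_1$ times this correlation sum be a small fraction of $f(t_1)$, so the conditional variance stays $\gtrsim f(t_1)$ and the factor is $\le 1-c<1$. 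Multiplying over the $J(\varepsilon)\asymp\mathrm{diam}(M)/\varepsilon^2$ shells gives $\exp(-C/\varepsilon^2)$ in the regime $(d-1)/2<\alpha\le d/2$; in the range $\max(0,d/2-1)<\alpha\le(d-1)/2$ the same bookkeeping, now sensitive to how the shell sizes and the correlation sum scale, produces the bound in terms of $t_1^{\min((d-2\alpha)/(2d),\,\alpha+1-d/2)}$, the two competing exponents coming from the correlation-sum estimate and from the fluctuation scale $f(t_1)=t_1^{\alpha+1-d/2}$ itself.

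For part (b), the plan is a freeze-and-change-of-measure argument on $[t_n,t_{n+1})$. I would replace $\sigma(s,y,u(s,y))$ by the frozen coefficient $\sigma(t_n,y,u(t_n,y))$, obtaining a field $v$ which is Gaussian conditionally on $\mathcal F_{t_n}$, and then: (i) lower-bound $P\big(\sup_{t\in[t_n,t_{n+1}),x}|v(t,x)|\le c\,t_1^{h/4}\mid\mathcal F_{t_n}\big)$; (ii) show that, on a suitable event, $\sup_{t\le t_{n+1},x}|u(t,x)-v(t,x)|$ is below the leftover slack $(\mathcal C_3/3-c')t_1^{h/4}$; (iii) transfer the estimate from $v$ to $u$ by a change-of-measure argument in the spirit of \cite{athreya2021small}, whose Radon–Nikodym derivative is controlled on the event in question. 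For (i) I would use the Gaussian correlation inequality to split the supremum over the space–time slab into a product of events over a spatial net refined to scale $\sqrt{t_1}$, the effective correlation length of $v$ over a step: there are $\asymp(\mathrm{diam}(M)/\sqrt{t_1})^d=t_1^{-d/2}$ such blocks, and, using $\mathcal C_1\le\sigma\le\mathcal C_2$ with the modulus-of-continuity bounds from Proposition \ref{covker_bound} and Lemma \ref{larged}, each keeps $|v|$ below $c\,t_1^{h/4}$ with probability bounded away from $0$ (here $\sqrt{f(t_1)}\le t_1^{h/4}$ enters), so the product is $\ge\exp(-C/t_1^{d/2})$. For (ii) I would introduce the stopping time $\tau=\inf\{t\ge t_n:\sup_x|u(t,x)|>\mathcal C_3 t_1^{h/4}\}$ and estimate $\sup_{t\le t_{n+1}\wedge\tau,x}|u(t,x)-v(t,x)|$ by iterating the mild formulation and a Gronwall argument, using the Lipschitz bound \eqref{hypothesis1} with $\mathcal D<\mathcal D_0$ small — so that, the freezing interval being only of length $t_1$, the error is with high conditional probability a small multiple of $t_1^{h/4}$, which simultaneously forces $\tau>t_{n+1}$ on that event.

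I expect the main obstacle in part (a) to be the uniform lower bound on the conditional variance across all $J(\varepsilon)$ shells: because the spatial correlations are long-range, distinct separation points are not independent (unlike in the white-noise case of \cite{athreya2021small}), and one must quantify exactly how much of the variance of $N(t_n,x)$ is carried by the already-exhausted shell — this is what dictates the $\varepsilon$-dependent choice \eqref{c0}. In part (b) the delicate point is the interplay of (ii) and (iii): one needs $\tau>t_{n+1}$ with conditional probability close to $1$ so that the frozen comparison is actually in force, which requires balancing the slack built into \eqref{En}, the smallness of $\mathcal D$, and the length of the freezing interval; and one must keep the Gaussian small-ball bound for $v$ sharp — in particular track the $t_1^{-d/2}$ exponent exactly — since it is raised to the power $T/t_1$ when the per-step bounds are telescoped in Theorem \ref{thm}.
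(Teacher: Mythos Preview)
Your overall architecture is right --- Markov reduction, shell-by-shell exhaustion for (a), freeze-plus-correlation-inequality for (b) --- but there are two genuine gaps.

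\textbf{Part (a).} You propose to bound below the conditional variance of $N(t_n,x)$ and conclude each shell factor is $\le 1-c<1$. But $N$ is \emph{not} Gaussian when $\sigma$ depends on $u$, so a large conditional variance does not by itself give anti-concentration; Anderson's inequality, which is what one actually invokes to pass from a variance lower bound to $P(|\,\cdot\,|\le\sqrt{f(t_1)})\le 1-c$, requires Gaussianity. The paper fixes this by first truncating (replacing $\sigma(s,y,u)$ by $\sigma(s,y,\bar f_{t_1}(u))$, which agrees with $\sigma$ on $F_1$) and then splitting $v=v_g+D$, where $v_g$ solves the equation with the \emph{deterministic} coefficient $\sigma(s,y,\bar f_{t_1}(u_0))$. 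Your shell/conditional-variance argument and the correlation-sum link to \eqref{c0} are then run on the genuinely Gaussian $v_g$; the remainder $D$, whose coefficient is bounded by $2\mathcal D\sqrt{f(t_1)}$ thanks to the truncation, is controlled separately by Lemma \ref{larged}. This is also where the smallness of $\mathcal D$ enters in (a), which your outline never uses. The two exponents in the second regime come from these two pieces: $\exp(-C/\varepsilon^2)$ from the Gaussian shells (rewritten via $\varepsilon^2\asymp t_1^{(d-2\alpha)/(2d)}$ from \eqref{c0}) and $\exp(-C/(\mathcal D^2 t_1^{\alpha+1-d/2}))$ from the $D$-error.

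\textbf{Part (b).} The change of measure is not for passing from the frozen $v$ back to $u$; that transfer is done exactly as in your step (ii), by a stopping time and a direct subtraction $P(E_0)\ge P(\widetilde D_0)-P(\sup|\widetilde M|>\cdots)$. The Girsanov drift is needed \emph{inside} the Gaussian step (i), to handle the initial condition: on the conditioning event one only knows $|u(t_n,\cdot)|\le\tfrac{\mathcal C_3}{3}t_1^{h/4}$, and $P_{t-t_n}u(t_n,\cdot)$ may already saturate that full budget at $t_{n+1}$, leaving no room for the stochastic part to satisfy the stricter endpoint constraint in $E_n$. The paper takes $\theta(t,x)=-P_t(u_0)(x)/(t_1\sigma(t,x))$ so that under $Q$ the deterministic part becomes $(1-t/t_1)P_t(u_0)(x)$, which vanishes at $t_1$; one then applies the Gaussian correlation inequality under $Q$ and pays the Cauchy--Schwarz cost $\exp(Ct_1^{h/2-1})$ to return to $P$. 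Finally, the stopping-time threshold matters: the paper stops when $|u(t,x)-u_0(x)|>t_1^{d/4}$, not $\mathcal C_3 t_1^{h/4}$, because this makes the freezing error $\widetilde M$ carry a coefficient bounded by $\mathcal D\,t_1^{d/4}$, so Lemma \ref{larged} yields $\exp(-C/(\mathcal D^2 t_1^{d/2}))$ --- exactly the rate needed to be absorbed by the Gaussian lower bound $\exp(-C'/t_1^{d/2})$ once $\mathcal D<\mathcal D_0$.
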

We now demonstrate how Theorem \ref{thm} follows from Proposition \ref{prop}.

\textbf{Proof of Theorem \ref{thm}}
\begin{proof}
The event $F_{n}$ in $\eqref{Fn}$ controls the behavior of $u(t,x)$ at the discrete time $t_n$. Taking the intersection over all such times yields
\[F:=\bigcap_{n=0}^{\left\lfloor\frac{T}{t_1}\right\rfloor}F_{n}\supset \left\lbrace\vert u(t,x)\vert\leq \sqrt{f(t_1)}, t\in[0,T],x\in M\right\rbrace.\]
Moreover,
\[P(F)=P\left(\bigcap_{n=0}^{\left\lfloor\frac{T}{t_1}\right\rfloor}F_{n}\right)=P(F_{0})\prod_{n=1}^{\left\lfloor\frac{T}{t_1}\right\rfloor}P\left(F_{n}\bigg| \bigcap_{k=0}^{n-1}F_{k}\right).\]
Since $u_0(x)\equiv 0$, $F_0=\Omega$, and Proposition \ref{prop}(a) therefore implies that, for $\max(0,d/2-1)<\alpha\leq(d-1)/2$,
\begin{align*}
P\left(\left\lbrace\vert u(t,x)\vert\leq t_1^\frac{2\alpha+2-d}{4}, t\in[0,T],x\in M\right\rbrace\right)&\leq P(F)\leq \left[\textbf{C}_4\exp\left(-\frac{\textbf{C}_5}{t_1^{\min\left(\frac{d-2\alpha}{2d},\frac{2\alpha+2-d}{2}\right)}}\right)\right]^{\left\lfloor\frac{T}{t_1}\right\rfloor}\\
&\leq \textbf{C}_2\exp\left(-\frac{\textbf{C}_3T}{t_1^{\min\left(\frac{3d-2\alpha}{2d},\frac{2\alpha+4-d}{2}\right)}}\right).
\end{align*}
Replacing $t_1^\frac{2\alpha+2-d}{4}$ with $\varepsilon_2$ yields
\begin{equation*}
P\left(\left\lbrace\vert u(t,x)\vert\leq \varepsilon_2, t\in[0,T],x\in M\right\rbrace\right)\leq\textbf{C}_2\exp\left(-\frac{\textbf{C}_3T}{\varepsilon_2^{\min\left(\frac{6d-4\alpha}{d(2\alpha+2-d)},\frac{4\alpha+8-2d}{2\alpha+2-d}\right)}}\right).
\end{equation*}

For $(d-1)/2<\alpha< d/2$, Proposition \ref{prop}(a) gives
\[
P\left(\left\lbrace\vert u(t,x)\vert\leq t_1^\frac{2\alpha+2-d}{4}, t\in[0,T],x\in M\right\rbrace\right)\leq\textbf{C}_2\exp\left(-\frac{\textbf{C}_3T}{t_1^{\frac{3d-2\alpha}{2d}}}\right),
\]
and replacing $t_1^\frac{2\alpha+2-d}{4}$ with $\varepsilon_2$ yields
\begin{equation*}
P\left(\left\lbrace\vert u(t,x)\vert\leq \varepsilon_2, t\in[0,T],x\in M\right\rbrace\right)\leq\textbf{C}_2\exp\left(-\frac{\textbf{C}_3T}{\varepsilon_2^{\frac{6d-4\alpha}{d(2\alpha+2-d)}}}\right).
\end{equation*}

Note that the exponent $\frac{6d-4\alpha}{d(2\alpha+2-d)}$ is decreasing on $(d-1)/2<\alpha< d/2$, and
\begin{equation*}
    \inf_{\alpha}\frac{6d-4\alpha}{d(2\alpha+2-d)}=2,
\end{equation*}
as $\alpha\to \frac{d}{2}^-$. At the critical value $\alpha=d/2$, an additional logarithmic correction appears. Indeed,
\begin{equation*}
    P\left(\left\lbrace\vert u(t,x)\vert\leq \sqrt{t_1\ln(1/t_1)}, t\in[0,T],x\in M\right\rbrace\right)\leq\textbf{C}_2\exp\left(-\frac{\textbf{C}_3T}{t_1\varepsilon^2}\right).
\end{equation*}
Replacing $\sqrt{t_1\ln(1/t_1)}$ with $\varepsilon_2$, we obtain
\begin{equation*}
    t_1\varepsilon^2\leq\frac{C\varepsilon_2^2\varepsilon^{2+2d}}{\ln(1/\varepsilon)}\leq\frac{C\varepsilon_2^2\ln(1/\varepsilon_2)^{-C'}}{\ln\ln(1/\varepsilon_2)}\leq\frac{C\varepsilon_2^2}{[\ln\ln(1/\varepsilon_2)]^{2}},
\end{equation*}
which implies
\begin{equation*}
    P\left(\left\lbrace\vert u(t,x)\vert\leq \varepsilon_2, t\in[0,T],x\in M\right\rbrace\right)\leq\textbf{C}_2\exp\left(-\frac{\textbf{C}_3T[\ln\ln(1/\varepsilon_2)]^2}{\varepsilon_2^{2}}\right).
\end{equation*}

We now turn to the lower bound. The event $E_{n}$ in $\eqref{En}$ deals with the behavior of $u(t,x)$ over the entire interval $[t_n,t_{n+1}]$. Hence
\[E:=\bigcap_{n=-1}^{\left\lfloor\frac{T}{t_1}\right\rfloor}E_{n}\subset \left\lbrace\vert u(t,x)\vert\leq t_1^{h/4}, t\in[0,T],x\in M\right\rbrace,\]
and
\[P(E)=P\left(\bigcap_{n=-1}^{\left\lfloor\frac{T}{t_1}\right\rfloor}E_{n}\right)=P(E_{-1})\prod_{n=0}^{\left\lfloor\frac{T}{t_1}\right\rfloor}P\left(E_{n}\bigg| \bigcap_{k=-1}^{n-1}E_{k}\right).\]
With $u_0(x)\equiv 0$ and $E_{-1}=\Omega$, Proposition \ref{prop}(b) immediately yields
\begin{equation*}
    \begin{split}
        P\left(\left\lbrace\vert u(t,x)\vert\leq \mathcal{C}_3t_1^{h/4}, t\in[0,T],x\in M\right\rbrace\right)&\geq P(E)\geq \left[\textbf{C}_6\exp\left(-\frac{\textbf{C}_7}{t_1^{d/2}}\right)\right]^{\left\lfloor\frac{T}{t_1}\right\rfloor+1}\\
        &\geq \textbf{C}_0\exp\left(-\frac{\textbf{C}_1T}{t_1^{d/2+1}}\right).
    \end{split}
\end{equation*}
Finally, replacing $t_1^{h/4}$ with $\varepsilon_3$ gives
\begin{equation*}
P\left(\left\lbrace\vert u(t,x)\vert\leq \varepsilon_3, t\in[0,T],x\in M\right\rbrace\right)\geq\textbf{C}_0\exp\left(-\frac{\textbf{C}_1T}{\varepsilon_3^{\frac{2d+4}{h}}}\right).
\end{equation*}
The proof is complete, and the rest of this paper is devoted to the proof of Proposition \ref{prop}.
\end{proof}

\section{Preliminaries}
\label{estimates} 
In this section, we recall basic facts on the heat kernel on a compact Riemannian manifold and collect several estimates that will be used throughout the paper.

\subsection{Heat kernel estimates}
Let $M$ be a compact $d$-dimensional Riemannian manifold without boundary. A function $P:(0,\infty)\times M \times M\to\R$ is called a fundamental solution (or heat kernel) of \eqref{she} if \begin{enumerate}
    \item $P\in C^{1,2,2}\left((0,\infty)\times M\times M\right)$;
    \item for all $x,y\in M$ and $t>0$,
    \[
        \partial_t P(t,x,y) = \frac{1}{2}\Delta_x P(t,x,y);
    \]
    \item for every $x\in M$ and $f\in C(M)$,
    \[
        \lim_{t\downarrow 0}\int_M P(t,x,y)f(y)\,dy = f(x).
    \]
\end{enumerate}
We write $P_t(x,y)=P(t,x,y)$ to simplify our notation. A fundamental solution $P$ can be constructed in \cite{Minakshisundaram:1953xh} with the method of parametrix (see also \cite{minakshisundaram1949some}). We recall the following estimate for small $t$, in \cite{Minakshisundaram:1953xh}, for all $n\geq0$, there exists $C>0$ such that
\begin{equation}
\label{heatkernelbound1}
    P_t(x,y)\leq (2\pi t)^{-\frac{d}{2}}\exp\left(-\frac{d(x,y)^2}{2t}\right)+Ct^n\quad x,y\in M,t\leq 1,
\end{equation}
and the following estimate for large $t\geq 1$ from \cite{baxter1976energy}: There exist $C,C'>0$ such that
\begin{equation}
    \label{heatkernelbound2}
    \sup_{x,y\in M}\vert P_t(x,y)-m_0^{-1}\vert\leq C\exp(-C't).
\end{equation}

The next lemma provides $L^1$-bounds on spatial and temporal increments of the heat kernel. These estimates extend Lemma 4.1 in \cite{chen2025small}.
\begin{lemma}
\label{heat_kernel_integral_bounds}
    There exist positive constants $C,C'$ depending only on $M$ such that, for all $x,y\in M$ and all $0\leq s\leq t\leq 1$,
    \begin{equation}
    \label{heat_kernel_spatial_integral_bound}
        \int_M\vert P_t(x,w)-P_t(y,w)\vert dw\leq \min\left(\frac{Cd(x,y)}{\sqrt{t}},2\right),
    \end{equation}
    \begin{equation}
    \label{heat_kernel_temporal_integral_bound}
        \int_M\vert P_t(x,w)-P_s(x,w)\vert dw\leq \min\left(C'\log\left(\frac{t}{s}\right),2\right).
    \end{equation}
\end{lemma}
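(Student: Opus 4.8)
The overall strategy is to reduce both inequalities to well-understood Gaussian estimates via the short-time heat kernel bound \eqref{heatkernelbound1}, using the fact that for $t \le 1$ the manifold kernel behaves, up to a polynomially small additive error, like the Euclidean Gaussian kernel. The trivial bound by $2$ in each case is immediate: $\int_M P_t(x,w)\,dw = 1$ for all $t$ (since $P_t$ is a probability kernel, being the transition density of Brownian motion on $M$), so the left-hand side of each inequality is at most $\int_M P_t(x,w)\,dw + \int_M P_t(y,w)\,dw = 2$ and similarly in the temporal case. So the content is the first term in each minimum, and one only needs it when $d(x,y)/\sqrt t$ (resp.\ $\log(t/s)$) is small.

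For the spatial bound \eqref{heat_kernel_spatial_integral_bound}: I would first establish a gradient estimate $|\nabla_x P_t(x,w)| \le C t^{-1/2}\big((2\pi t)^{-d/2}\exp(-c\,d(x,w)^2/t) + C' t^n\big)$ for $t \le 1$, which follows from the parametrix construction (differentiating the Minakshisundaram–Pleijel expansion term by term) or alternatively from standard Li–Yau / Cheng–Yau type gradient estimates for the heat kernel on a compact manifold. Then write $P_t(x,w) - P_t(y,w) = \int_\gamma \nabla P_t(\cdot,w)$ along a minimizing geodesic $\gamma$ from $x$ to $y$ of length $d(x,y)$, so that $|P_t(x,w)-P_t(y,w)| \le d(x,y)\sup_{z\in\gamma}|\nabla_z P_t(z,w)|$; integrating in $w$ and using that $\int_M (2\pi t)^{-d/2}\exp(-c\,d(z,w)^2/t)\,dw$ is bounded uniformly in $z$ and $t\le 1$ (comparison with Euclidean Gaussian integral in normal coordinates, using compactness and volume comparison) gives the bound $C d(x,y)/\sqrt t$. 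One subtlety is that the supremum of $d(z,w)$ over $z \in \gamma$ differs from $d(x,w)$, but this only affects constants in the exponent, which is harmless after integrating in $w$; and the additive $t^n$ error contributes $C d(x,y) t^{n-1/2}$, absorbed into the constant for $t \le 1$ and $n$ large.

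For the temporal bound \eqref{heat_kernel_temporal_integral_bound}: here I would use the semigroup/Chapman–Kolmogorov relation and write, via $\partial_r P_r = \tfrac12\Delta P_r$, that $P_t(x,w) - P_s(x,w) = \tfrac12\int_s^t \Delta_x P_r(x,w)\,dr$, but the $L^1_w$ norm of $\Delta_x P_r$ is of order $r^{-1}$, which integrates to exactly $\log(t/s)$ — this is the source of the logarithm. More carefully, I would bound $\int_M |\partial_r P_r(x,w)|\,dw \le C/r$ for $r \le 1$ using \eqref{heatkernelbound1} together with a matching estimate for $\partial_t P_t$ from the parametrix expansion (again the Gaussian prefactor $t^{-d/2}$ differentiated in $t$ produces the $t^{-1}$ times a Gaussian, whose $w$-integral is $O(1)$), then integrate $\int_s^t C r^{-1}\,dr = C\log(t/s)$. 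The main obstacle — and the step deserving the most care — is obtaining the pointwise derivative estimates ($\nabla_x P_t$ and $\partial_t P_t$) with the correct Gaussian decay and the correct powers of $t$ uniformly for $t\le 1$; once those are in hand, both integral bounds follow by integrating the Gaussian factor over $M$, which is uniformly bounded by compactness and volume comparison, plus the elementary truncation at $2$. I would cite the parametrix estimates from \cite{Minakshisundaram:1953xh} (and standard references for heat-kernel gradient bounds) rather than rederiving them.
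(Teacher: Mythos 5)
Your proposal follows essentially the same route as the paper: both prove the spatial bound via the mean value theorem along a minimizing geodesic plus a Gaussian gradient estimate for $\nabla_x P_t$ (you invoke the parametrix or Li--Yau; the paper cites a gradient estimate with $V(x,\sqrt t)$ normalization and Bishop--Gromov), and both prove the temporal bound by writing $P_t-P_s=\int_s^t\partial_r P_r\,dr$, bounding $\int_M|\partial_r P_r|\,dw\le C/r$, and integrating, with the trivial cutoff at $2$ from $\int_M P_t(x,\cdot)=1$. The only difference is which specific heat-kernel derivative estimate you cite, which does not change the structure of the argument.
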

\begin{proof}
    Let $\gamma:[0,1]\to M$ be a minimizing geodesic jointing $x$ and $y$ with $\gamma(0)=x$ and $\gamma(1)=y$. For each fixed $w\in M$, the map $s\mapsto P_t(\gamma(s),w)$ is differentiable. By the mean value theorem,
    \begin{equation}
    \label{mean_value_theorem}
        \vert P_t(x,w)-P_t(y,w)\vert\leq d(x,y)\sup_{z\in \text{Im}(\gamma)}\vert\nabla_xP_t(z,w)\vert.
    \end{equation}
For a compact $n$-dimensional Riemannian manifold with Ricci curvature bounded below, \cite{10.1007/BF02399203} concluded that the heat kernel satisfies Gaussian-type gradient estimates for small $t$:
\begin{equation}
\label{heat_kernel_gradient}
    \vert\nabla_xP_t(x,w)\vert\leq \frac{C}{\sqrt{t}V(x,\sqrt{t})}\exp\left(-\frac{C'd(x,w)^2}{t}\right),
\end{equation}
where $V(x,r)$ denotes the volume of the geodesic ball of radius $r$ centered at $x$. Integrating \eqref{mean_value_theorem} over $w \in M$ and applying \eqref{heat_kernel_gradient} gives
\begin{equation*}
\begin{split}
        \int_M\vert P_t(x,w)-P_t(y,w)\vert dw&\leq d(x,y)\sup_{z\in \text{Im}(\gamma)}\int_M\vert\nabla_xP_t(z,w)\vert dw\\
    &\leq d(x,y)\sup_{z\in \text{Im}(\gamma)}\int_M\frac{C}{\sqrt{t}V(z,\sqrt{t})}\exp\left(-\frac{C'd(z,w)^2}{t}\right) dw.
\end{split}
\end{equation*}
Via the Bishop-Gromov volume comparison theorem, $M$ satisfies a local volume doubling property for small $t$, which implies
\begin{equation*}
    \int_M\exp\left(-\frac{C'd(z,w)^2}{t}\right) dw\leq CV(z,\sqrt{t}).
\end{equation*}
Hence,
\begin{equation*}
    \int_M\vert P_t(x,w)-P_t(y,w)\vert dw\leq \frac{Cd(x,y)}{\sqrt{t}}.
\end{equation*}
Since the integral is trivially bounded by $2$, \eqref{heat_kernel_spatial_integral_bound} follows.

For the temporal bound, since $P_t(x,w)$ is $C^1$ in $t$,
\begin{equation*}
\begin{split}
        \int_M\vert P_t(x,w)-P_s(x,w)\vert dw &\leq \int_M\int_s^t\vert \partial_rP_r(x,w)\vert drdw\\
        &\leq \int_{s}^{t}\int_M\frac{C}{rV(x,\sqrt{r})}\exp\left(-\frac{C'd(x,w)^2}{r}\right)dwdr\\
        &\leq C\int_s^t\frac{1}{r}dr= C\log\left(\frac{t}{s}\right),
\end{split}
\end{equation*}
where the second inequality follows from equation $(4.1)$ in \cite{coulhon2020gradient} and Fubini's theorem. Again, the integral is clearly bounded by $2$, giving \eqref{heat_kernel_temporal_integral_bound}.
\end{proof}

\subsection{Noise term estimates}
We denote the stochastic integral term in \eqref{sol} (the noise term) by
\begin{equation}
    \label{noise_term}
    N(t,x):=\int_{[0,t]\times M}P_{t-s}(x,y)\sigma(s,y,u(s,y))W(dsdy),
\end{equation}
and we will estimate its regularity in the following lemmas.

\begin{lemma}[Spatial regularity of the noise term]\label{spatialregularity} There exists a positive constant $C$, independent of $\mathcal{C}_2$ in \eqref{hypothesis2}, such that for any $\alpha>\max\left(\frac{d-2}{2},0\right)$, $\xi\in(0,\min\left(2\alpha-d+2,1\right))$, $t\in[0,1]$ and $x,y\in M$, we have
\begin{equation*}
\mathbb{E}\left[(N(t,x)-N(t,y))^2\right]\leq\begin{cases}
    C\mathcal{C}_2^2d(x,y)^{2\xi} &\alpha>d/2\\
    C\mathcal{C}_2^2d(x,y)^{\xi} &\alpha\leq d/2.
    \end{cases}
\end{equation*}
\end{lemma}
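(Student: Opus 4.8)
The plan is to compute the $L^2$-norm of the spatial increment $N(t,x)-N(t,y)$ directly from the covariance structure of the colored noise. Using the isometry for Walsh stochastic integrals together with \eqref{covker}, we have
\[
\mathbb{E}\left[(N(t,x)-N(t,y))^2\right]=\int_0^t\!\!\int_{M^2}\!\Delta_{s}(x,y,z)\,G_{\alpha,\rho}(z,w)\,\Delta_{s}(x,y,w)\,dz\,dw\,ds,
\]
where $\Delta_s(x,y,z):=\big(P_{t-s}(x,z)-P_{t-s}(y,z)\big)\sigma(s,z,u(s,z))$. Since $\sigma$ is uniformly bounded by $\mathcal{C}_2$ in \eqref{hypothesis2}, and absorbing the constant contribution $\rho/m_0$ of $G_{\alpha,\rho}$ into $G_\alpha$ plus a harmless term, it suffices to bound
\[
\mathcal{C}_2^2\int_0^t\!\!\int_{M^2}\big|P_{s}(x,z)-P_{s}(y,z)\big|\,\big|G_{\alpha}(z,w)\big|\,\big|P_{s}(x,w)-P_{s}(y,w)\big|\,dz\,dw\,ds
\]
after the change of variable $s\mapsto t-s$; the $\rho/m_0$ part vanishes because $\int_M\big(P_{s}(x,z)-P_{s}(y,z)\big)\,dz=0$.

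The key steps are as follows. First I would split the $s$-integral at a threshold $s_0=d(x,y)^2$ (capped at $1$). For the near-diagonal part $s\le s_0$, I would control $|G_\alpha|$ using Proposition \ref{covker_bound}: when $\alpha>d/2$ it is bounded, so one factor of $\int_M|P_s(x,z)-P_s(y,z)|\,dz$ can be pulled out and estimated by \eqref{heat_kernel_spatial_integral_bound} as $\min(Cd(x,y)/\sqrt s,2)$, while the remaining $z$-integral is bounded by $2$; integrating $\min(Cd(x,y)/\sqrt s,2)$ over $s\in[0,s_0]$ yields a contribution of order $d(x,y)^2$, which is $\le Cd(x,y)^{2\xi}$ for the relevant range of $\xi$. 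When $\alpha\le d/2$, $G_\alpha$ has an integrable singularity ($d(z,w)^{2\alpha-d}$, or logarithmic at $\alpha=d/2$), so after applying $\sup_z\int_M|G_\alpha(z,w)|\,dw<\infty$ (finite since $2\alpha-d>-d$), one factor of the heat-kernel increment integrates to $\min(Cd(x,y)/\sqrt s,2)\le 2$ and the other to $2$, giving a near-diagonal contribution of order $s_0=d(x,y)^2\le Cd(x,y)^\xi$. Second, for the far part $s_0<s\le t$, I would bound $|P_s(x,z)-P_s(y,z)|$ pointwise using the gradient estimate \eqref{heat_kernel_gradient}, $|P_s(x,z)-P_s(y,z)|\le Cd(x,y)\,s^{-1/2}V(x,\sqrt s)^{-1}\exp(-C'd(x,z)^2/s)$ (uniformly along the connecting geodesic as in the proof of Lemma \ref{heat_kernel_integral_bounds}); then the $z$- and $w$-integrals against $|G_\alpha(z,w)|$ reduce, via the volume-doubling bound $\int_M\exp(-C'd(z,w)^2/s)\,dw\le CV(z,\sqrt s)$, to estimating $\int_0^{\mathrm{diam}(M)}|G_\alpha(\cdot)|$-type quantities against Gaussian weights, leaving an $s$-integral of the form $d(x,y)^2\int_{s_0}^{1}s^{-a}\,ds$ with $a$ depending on $\alpha$ and $d$ through Proposition \ref{covker_bound}; choosing $\xi<\min(2\alpha-d+2,1)$ makes this integral converge (or contribute a harmless $s_0^{1-a}$) and produces the stated powers $d(x,y)^{2\xi}$ for $\alpha>d/2$ and $d(x,y)^{\xi}$ for $\alpha\le d/2$. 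Finally I would combine the two regimes, noting that since $t\le1$ and $d(x,y)\le\mathrm{diam}(M)$, all lower-order powers of $d(x,y)$ are dominated by the stated exponent, and collect constants; crucially none of the constants depend on $\mathcal{C}_2$ beyond the explicit prefactor $\mathcal{C}_2^2$, as the Lipschitz constant $\mathcal{D}$ never enters.

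The main obstacle I expect is the bookkeeping in the far regime $s>s_0$: one must carefully track how the exponent $a$ in $\int s^{-a}\,ds$ arises from the interplay between the $s^{-1/2}V(x,\sqrt s)^{-1}\sim s^{-1/2-d/2}$ decay of each heat-kernel gradient factor, the two Gaussian integrations that each recover a factor $V\sim s^{d/2}$, and the scaling of the $G_\alpha$ kernel (which contributes $s^{\alpha-d/2}$ when $\alpha<d/2$, a logarithmic factor when $\alpha=d/2$, and nothing when $\alpha>d/2$), in order to see that the resulting integral is controlled precisely for $\xi$ below the critical threshold $\min(2\alpha-d+2,1)$ and that the case split $\alpha>d/2$ versus $\alpha\le d/2$ yields the exponents $2\xi$ and $\xi$ respectively. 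A secondary technical point is justifying the uniform gradient bound along the minimizing geodesic and the volume-doubling estimate on the (fixed, compact) manifold for all $s\le1$, but this is already established in the proof of Lemma \ref{heat_kernel_integral_bounds} and can be invoked directly.
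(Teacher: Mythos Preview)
Your overall strategy---splitting the $s$-integral at $s_0=d(x,y)^2$ and using the gradient estimate in the far regime---is workable, but the near-diagonal step for $\alpha\le d/2$ contains a genuine gap. You claim that after invoking $\sup_z\int_M|G_\alpha(z,w)|\,dw<\infty$ the two heat-kernel increment factors can be bounded separately by their $L^1$ norms, yielding a contribution of order $s_0=d(x,y)^2$. This factorization is not valid: for a singular kernel $G$ coupling $z$ and $w$, the bilinear form $\int|f(z)||G(z,w)||f(w)|\,dz\,dw$ is \emph{not} controlled by $\|f\|_{L^1}^2\cdot\sup_z\int|G(z,w)|\,dw$. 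Concretely, when $\alpha<d/2$ the quantity $\int_{M^2}|K_s(z)||G_\alpha(z,w)||K_s(w)|\,dz\,dw$ behaves like $s^{\alpha-d/2}$ as $s\downarrow0$ (replace $|K_s|\le P_s(x,\cdot)+P_s(y,\cdot)$ and let the heat semigroup act on the Riesz-type kernel), so integrating over $[0,s_0]$ produces $d(x,y)^{2\alpha-d+2}$, not $d(x,y)^2$. The repair requires exactly the convolution estimate
\[
\int_M P_{t-s}(x,w)\,G_{\alpha,\rho}(w,z)\,dw\;\le\;C(t-s)^{\alpha-d/2},
\]
which is the main computation you are missing.

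The paper's proof is organized quite differently and avoids the time split altogether. For $\alpha>d/2$ it uses the one-line interpolation $\min(Cd(x,y)/\sqrt{t-s},\,2)\le C\,d(x,y)^{\xi}(t-s)^{-\xi/2}$ on the squared $L^1$ heat-kernel increment and integrates. For $\alpha<d/2$ it dominates one factor $|K_s(w)|\le P_{t-s}(x,w)+P_{t-s}(y,w)$, applies the convolution estimate displayed above to collapse the $w$-integral to $(t-s)^{\alpha-d/2}$, and then uses the $L^1$ increment bound \eqref{heat_kernel_spatial_integral_bound} on the remaining factor; the resulting integral $d(x,y)^{\xi}\int_0^t(t-s)^{\alpha-(d+\xi)/2}\,ds$ converges precisely for $\xi<2\alpha-d+2$. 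For $\alpha=d/2$ it handles the logarithmic kernel via H\"older's inequality. This route is shorter and makes transparent why the exponent drops from $2\xi$ to $\xi$ when $\alpha\le d/2$: only one increment factor is spent to gain decay in $d(x,y)$, while the other is sacrificed to tame the $G_\alpha$ singularity.
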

\begin{proof}
To simplify our notation, we fix $t,x$ and y, and define, for $0<s<t$,
\begin{equation*}
    K_s(w):=P_{t-s}(x,w)-P_{t-s}(y,w).
\end{equation*}
By Definition \ref{color_noise} and \eqref{covker}, we have
\begin{equation}
\label{spatial_regularity_proof}
\begin{split}
&\mathbb{E}\left[(N(t,x)-N(t,y))^2\right]= \mathbb{E}\left[W^2_{\alpha,\rho}(K_t(\cdot)\sigma(t,\cdot,u(t,\cdot)))\right]\\
&\leq \mathbb{E}\int_0^t\int_{M^2}\left\vert K_s(z)K_s(w)G_{\alpha,\rho}(w,z)\sigma(s,z,u(s,z))\sigma(s,w,u(s,w))\right\vert dwdzds\\
&\leq \sup_{r,v}\mathbb{E}\left[\sigma(r,v,u(r,v))^2\right]\int_0^t\int_{M^2}\vert K_s(z)\vert\cdot \vert K_s(w)\vert\cdot G_{\alpha,\rho}(w,z)dwdzds.
\end{split}
\end{equation}

\textbf{Case 1: $\alpha>d/2$}

By \eqref{hypothesis2}, Proposition \ref{covker_bound}, and Lemma \ref{heat_kernel_integral_bounds},
\begin{equation*}
\begin{split}
\mathbb{E}\left[(N(t,x)-N(t,y))^2\right]&\leq C\mathcal{C}_2^2
\int_0^t\int_{M^2}\vert K_s(z)\vert\cdot \vert K_s(w)\vert dwdzds\\
&\leq C\mathcal{C}_2^2\int_0^t\min\left(\frac{Cd(x,y)}{\sqrt{t-s}},2\right)^2ds\\
&\leq C\mathcal{C}_2^2d(x,y)^{2\xi}\int_0^t(t-s)^{-\xi}ds\leq C\mathcal{C}_2^2d(x,y)^{2\xi},
\end{split}
\end{equation*}
where we used $\min(cx,2)\leq\max(c,2)x^\xi$ for $x>0, 0<\xi<1$ and $\int_0^t(t-s)^{-\xi}ds\leq\frac{1}{1-\xi}$.

\textbf{Case 2: $\alpha=d/2$}

Using H\"older's inequality and the integrability of the logarithmic singularity on compact manifolds: for $p \in (1,\infty)$,
\begin{equation*}
        \Vert\log^-(d(\cdot,z))P_t(x,\cdot)\Vert_{L^1}\leq\Vert P_t(x,\cdot)\Vert_{L^p}\cdot\Vert \log^-(d(\cdot,z))\Vert_{L^{\frac{p}{p-1}}}\leq Ct^{-\frac{d}{2}(1-\frac{1}{p})}.
\end{equation*}
Applying Proposition \ref{covker_bound}, we obtain
\begin{equation*}
    \begin{split}
\mathbb{E}&\left[(N(t,x)-N(t,y))^2\right]\leq C\mathcal{C}_2^2
\int_0^t\int_{M^2}\vert K_s(z)\vert\cdot \vert K_s(w)\vert \cdot(1+\log^-(d(w,z))) dwdzds\\
&\leq C\mathcal{C}_2^2\left[d(x,y)^{2\xi}+\int_0^t\int_M\vert K_s(z)\vert \left(\int_M\log^-(d(w,z))[P_{t-s}(x,w)+P_{t-s}(y,w)] dw\right)dzds\right]\\
&\leq C\mathcal{C}_2^2\left(d(x,y)^{2\xi}+d(x,y)^{\xi}\int_0^t(t-s)^{-\frac{\xi}{2}-\frac{d}{2}(1-\frac{1}{p})}ds\right)\leq C\mathcal{C}_2^2d(x,y)^{\xi},
\end{split}
\end{equation*}
where the integral $\int_0^t(t-s)^{-\frac{\xi}{2}-\frac{d}{2}(1-\frac{1}{p})}ds$ converges when choosing $p<\frac{d}{d-2+\xi}$.

\textbf{Case 3: $\alpha<d/2$}
\begin{equation}
\label{heat_kernel_conv}
\begin{split}
        \int_MP_{t-s}(x,w)G_{\alpha,\rho}(w,z)dw&=\int_MP_{t-s}(x,w)\left(\frac{\rho}{m_0}+\frac{1}{\Gamma(\alpha)}\int_0^\infty r^{\alpha-1} \left(P_r(w,z)-\frac{1}{m_0}\right)dr\right)dw\\
        &=C+C'\int_0^\infty r^{\alpha-1}\left(P_{t-s+r}(x,z)-\frac{1}{m_0}\right)dr\\
        &\leq C+C'\int_0^\infty r^{\alpha-1}(t-s+r)^{-d/2}dr\\
        &=C+C'(t-s)^{\alpha-\frac{d}{2}}\int_0^\infty u^{\alpha-1}(1+u)^{-d/2}du\leq C(t-s)^{\alpha-\frac{d}{2}},
\end{split}
\end{equation}
where integral interchanging follows the equation $(2.8)$ in \cite{brosamler1983laws} and $\int_0^\infty u^{\alpha-1}(1+u)^{-d/2}du$ converges when $0<\alpha<d/2$. Similarly, the same estimate holds for $P_{t-s}(y,w)$. Substituting \eqref{heat_kernel_conv} into \eqref{spatial_regularity_proof} yields
\begin{equation*}
\begin{split}
        \mathbb{E}\left[(N(t,x)-N(t,y))^2\right]&\leq C\mathcal{C}_2^2\int_0^t\int_{M^2}\vert K_s(z)\vert\cdot[P_{t-s}(x,w)+P_{t-s}(y,w)]G_{\alpha,\rho}(w,z)dwdzds\\
        &\leq C\mathcal{C}_2^2\int_0^t\int_{M}\vert K_s(z)\vert(t-s)^{\alpha-\frac{d}{2}}dzds\\
        &\leq C\mathcal{C}_2^2d(x,y)^\xi\int_0^t(t-s)^{\alpha-\frac{d+\xi}{2}}ds\leq C\mathcal{C}_2^2d(x,y)^\xi
\end{split}
\end{equation*}
valid for $0 < \xi < \min(2\alpha-d+2,1)$ and Dalang's condition ensures $2\alpha-d+2>0$.
\end{proof}

\begin{lemma}[Temporal regularity of the noise term]\label{temporalregularity} There exists a positive constant $C$, independent of $\mathcal{C}_2$ in \eqref{hypothesis2}, such that for any $\alpha>\max\left(\frac{d-2}{2},0\right)$, $\zeta\in\left(0,\min\left(\alpha+1-\frac{d}{2},\frac{1}{2}\right)\right)$, $0\leq s\leq t\leq 1$ and $x\in M$, we have
\begin{equation*}
\mathbb{E}\left[(N(t,x)-N(s,x))^2\right]\leq\begin{cases}
    C\mathcal{C}_2^2(t-s)^{2\zeta}&\alpha>d/2\\
    C\mathcal{C}_2^2(t-s)^{\zeta} &\alpha\leq d/2.
    \end{cases}
\end{equation*}
\end{lemma}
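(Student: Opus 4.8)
The plan is to mimic the structure of the proof of Lemma \ref{spatialregularity}, replacing the spatial increment kernel $K_s(w)=P_{t-s}(x,w)-P_{t-s}(y,w)$ by a temporal increment kernel. Fix $0\le s\le t\le1$ and $x\in M$. Writing the noise term as a single Walsh integral, the difference $N(t,x)-N(s,x)$ splits into two pieces: the part coming from the interval $[s,t]$, namely $\int_{[s,t]\times M}P_{t-r}(x,y)\sigma(r,y,u(r,y))W(dr\,dy)$, and the part over $[0,s]$ with integrand difference $\bigl(P_{t-r}(x,y)-P_{s-r}(x,y)\bigr)\sigma(r,y,u(r,y))$. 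By the It\^o isometry for colored noise (Definition \ref{color_noise} and \eqref{covker}) and Hypothesis \eqref{hypothesis2}, after bounding $\sigma^2\le\mathcal{C}_2^2$ I would control each piece by
\[
\mathcal{C}_2^2\int_0^s\!\!\int_{M^2}\bigl|\Delta_r(z)\bigr|\,\bigl|\Delta_r(w)\bigr|\,G_{\alpha,\rho}(w,z)\,dw\,dz\,dr
\;+\;
\mathcal{C}_2^2\int_s^t\!\!\int_{M^2}P_{t-r}(x,z)P_{t-r}(x,w)\,G_{\alpha,\rho}(w,z)\,dw\,dz\,dr,
\]
where $\Delta_r(w):=P_{t-r}(x,w)-P_{s-r}(x,w)$.

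The second (near-diagonal) integral is handled exactly as in the three cases of Lemma \ref{spatialregularity}: in the regime $\alpha<d/2$ one uses the computation \eqref{heat_kernel_conv} giving $\int_M P_{t-r}(x,w)G_{\alpha,\rho}(w,z)\,dw\le C(t-r)^{\alpha-d/2}$, then $\int_M P_{t-r}(x,z)\,dz\le C$, so the integral is $\le C\int_s^t(t-r)^{\alpha-d/2}\,dr\le C(t-s)^{\alpha+1-d/2}\le C(t-s)^\zeta$ since $\zeta<\alpha+1-d/2$; the cases $\alpha=d/2$ (logarithmic, via the H\"older bound on $\|\log^- d(\cdot,z)P_t(x,\cdot)\|_{L^1}$) and $\alpha>d/2$ (bounded $G_\alpha$) are analogous and easier, producing $(t-s)^{\zeta}$ and $(t-s)^{2\zeta}$ respectively. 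For the first integral I would integrate out the $G_{\alpha,\rho}$ factor against one of the heat kernels (again using \eqref{heat_kernel_conv} when $\alpha<d/2$, Proposition \ref{covker_bound} when $\alpha\ge d/2$) to reduce to $\mathcal{C}_2^2\int_0^s (t-r)^{\alpha-d/2}\int_M|\Delta_r(z)|\,dz\,dr$ (or the corresponding bounded/logarithmic version), and then invoke the temporal $L^1$-bound \eqref{heat_kernel_temporal_integral_bound}, $\int_M|\Delta_r(z)|\,dz\le\min\bigl(C'\log\frac{t-r}{s-r},2\bigr)$. Using $\min(C'\log\frac{t-r}{s-r},2)\le C\bigl(\frac{t-s}{s-r}\bigr)^\beta$ for small $\beta>0$ and $\int_0^s(t-r)^{\alpha-d/2}(s-r)^{-\beta}\,dr\le C(t-s)^{\alpha+1-d/2-\beta}$ up to renaming $\zeta=\alpha+1-d/2-\beta$, one obtains the claimed power; when $\alpha\ge d/2$ the exponent $\alpha-d/2$ is replaced by $0$ (or a harmless logarithm), and a slightly more careful splitting of the $r$-integral at $r=(s+t)/2$ yields $(t-s)^{\zeta}$ — or $(t-s)^{2\zeta}$ in the continuous regime $\alpha>d/2$, matching the spatial lemma by the same $\min(x,2)\le x^{2\zeta}$ trick after noting the $L^1$-increment is itself $O\bigl((t-s)^{1-}\bigr)$ there.

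The main obstacle I anticipate is the first integral in the singular regime $\alpha<d/2$: one must simultaneously exploit the time-singularity $(t-r)^{\alpha-d/2}$ from the covariance convolution and the logarithmic temporal-increment bound $\log\frac{t-r}{s-r}$, which blows up as $r\uparrow s$, and check that the resulting integral still converges and produces the right power of $t-s$ under the constraint $\zeta<\min(\alpha+1-d/2,\tfrac12)$. The key is to convert the logarithm into a small negative power $(s-r)^{-\beta}(t-s)^\beta$, choose $\beta$ small enough that $\alpha-d/2-\beta>-1$, and split at $r=(s+t)/2$ so that near $r=s$ the factor $(t-r)^{\alpha-d/2}$ is comparable to $(t-s)^{\alpha-d/2}$ and the remaining $\int(s-r)^{-\beta}dr$ is finite, while on $[0,(s+t)/2]$ the increment bound is the constant $2$ and $\int_0^{(s+t)/2}(t-r)^{\alpha-d/2}dr$ contributes $(t-s)^{\alpha+1-d/2}$; the $\tfrac12$ cap on $\zeta$ never binds in this case since $\alpha+1-d/2$ can be arbitrarily small near criticality, so the stated range of $\zeta$ is exactly what the estimates support.
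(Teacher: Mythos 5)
Your proposal follows essentially the same route as the paper: split $N(t,x)-N(s,x)$ into the $[0,s]$ piece with kernel $L_r=P_{t-r}(x,\cdot)-P_{s-r}(x,\cdot)$ and the $[s,t]$ piece (with the cross term vanishing by orthogonality of the Walsh integral over disjoint time intervals), bound $\sigma^2\le\mathcal{C}_2^2$, convolve $G_{\alpha,\rho}$ with the heat kernel via \eqref{heat_kernel_conv} (or Proposition \ref{covker_bound} at and above criticality), and use the temporal $L^1$-increment bound \eqref{heat_kernel_temporal_integral_bound} converted to a power of $t-s$.

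A few of your intermediate details in the singular regime $\alpha<d/2$ are off, though the final power comes out right. After bounding $|L_r(w)|\le P_{t-r}(x,w)+P_{s-r}(x,w)$ and integrating against $G_{\alpha,\rho}$, the dominant factor is $(s-r)^{\alpha-d/2}$ (not $(t-r)^{\alpha-d/2}$, which is the \emph{smaller} of the two since $\alpha-d/2<0$ and $s-r<t-r$); dropping the $(s-r)^{\alpha-d/2}$ term would underestimate the singularity at $r=s$, and it is precisely that term that produces the constraint $\zeta<\alpha+1-d/2$. Relatedly, your claimed bound $\int_0^s(t-r)^{\alpha-d/2}(s-r)^{-\beta}\,dr\le C(t-s)^{\alpha+1-d/2-\beta}$ is not correct — that integral is $O(1)$, not decaying in $t-s$ — and the split at $r=(s+t)/2$ is not in the range $[0,s]$ at all. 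The paper avoids all of this: once one has the correct factor $(s-r)^{\alpha-d/2}$ and uses $\int_M|L_r|\,dz\le C(t-s)^\zeta(s-r)^{-\zeta}$, the bound $(t-s)^\zeta\int_0^s(s-r)^{\alpha-d/2-\zeta}\,dr\le C(t-s)^\zeta$ follows directly for $\zeta<\alpha+1-d/2$, with no splitting of the $r$-integral. Your handling of $I_2$ and of the regimes $\alpha\ge d/2$ matches the paper.
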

\begin{proof}
As in the simplification used in Lemma \ref{spatialregularity}, for fix $s$, $t$ and $x$, we define, for $0<r<s$,
\begin{equation*}
L_r(z):=P_{t-r}(x-z)-P_{s-r}(x-z).
\end{equation*}
By the independence of the noise on disjoint time intervals (Definition \ref{color_noise}), we can split the difference:
\begin{equation*}
\begin{split}
\mathbb{E}[(N(t,x)-N(s,x))^2]&=\mathbb{E}\left[\left(\int_0^s\int_{M}L_r(z)\sigma(r,z,u(r,z))W(dzdr)\right.\right.\\
&\hspace{2cm}\left.\left.+\int_s^t\int_{M}P_{t-r}(x-z)\sigma(r,z,u(r,z))W(dzdr)\right)^2\right]\\
&=\mathbb{E}\left[\left(\int_0^s\int_{M}L_r(z)\sigma(r,z,u(r,z))W(dzdr)\right)^2\right]\\
&\hspace{2cm}+\mathbb{E}\left[\left(\int_s^t\int_{M}P_{t-r}(x-z)\sigma(r,z,u(r,z))W(dzdr)\right)^2\right]\\
&=:I_1+I_2
\end{split}
\end{equation*}
where the cross term vanishes.

\textbf{Estimate of $I_1$:}

As in Lemma \ref{spatialregularity},
\begin{equation*}
I_1\leq \sup_{r,v}\mathbb{E}\left[\sigma(r,v,u(r,v))^2\right]\int_0^s\int_{M^2}\vert L_r(z)\vert\cdot \vert L_r(w)\vert\cdot G_{\alpha,\rho}(w,z)dwdzdr.
\end{equation*}

\textbf{$\bullet$ Case 1: $\alpha>d/2$}
Using \eqref{hypothesis2}, Proposition \ref{covker_bound} and \eqref{heat_kernel_temporal_integral_bound},
\begin{equation*}
\begin{split}
I_1&\leq C\mathcal{C}_2^2
\int_0^s\int_{M^2}\vert L_r(z)\vert\cdot \vert L_r(w)\vert dwdzdr\\
&\leq C\mathcal{C}_2^2\int_0^s\min\left(C\log\left(\frac{t-r}{s-r}\right),2\right)^2dr\\
&\leq C\mathcal{C}_2^2(t-s)^{2\zeta}\int_0^s(s-r)^{-2\zeta}dr\leq C\mathcal{C}_2^2(t-s)^{2\zeta},
\end{split}
\end{equation*}
where we used $\min(c\log(1+x),2)\leq\max(c,2)x^\zeta$ for $x>0, 1>\zeta>0$ and $\int_0^s(s-r)^{-2\zeta}ds<\infty$ when $\zeta<\frac{1}{2}$.

\textbf{$\bullet$ Case 2: $\alpha=d/2$}
By Proposition \ref{covker_bound} and the logarithmic integrability,
\begin{equation*}
    \begin{split}
I_1&\leq C\mathcal{C}_2^2
\int_0^s\int_{M^2}\vert L_r(z)\vert\cdot \vert L_r(w)\vert \cdot(1+\log^-(d(w,z))) dwdzdr\\
&\leq C\mathcal{C}_2^2\left[(t-s)^{2\zeta}+\int_0^s\int_M\vert L_r(z)\vert\int_M\log^-(d(w,z))[P_{t-r}(x-w)+P_{s-r}(x-w)]dwdzdr\right]\\
&\leq C\mathcal{C}_2^2\left[(t-s)^{2\zeta}+(t-s)^{\zeta}\int_0^s(s-r)^{-\zeta-\frac{d}{2}(1-\frac{1}{p})}ds\right]\leq C\mathcal{C}_2^2(t-s)^{\zeta},
\end{split}
\end{equation*}
with $p<\frac{d}{d-2+2\zeta}$ to ensure convergence.

\textbf{$\bullet$ Case 3: $\alpha<d/2$}
\begin{equation*}
\begin{split}
        I_1&\leq C\mathcal{C}_2^2\int_0^s\int_{M}\vert L_r(z)\vert\int_M[P_{t-r}(x-w)+P_{s-r}(x-w)]G_{\alpha,\rho}(w,z)dwdzdr\\
        &\leq C\mathcal{C}_2^2(t-s)^{\zeta}\int_0^s(s-r)^{-\zeta}\left[(t-r)^{\alpha-\frac{d}{2}}+(s-r)^{\alpha-\frac{d}{2}}\right]dr\\
        &\leq C\mathcal{C}_2^2(t-s)^{\zeta}\int_0^s(s-r)^{-\zeta+\alpha-\frac{d}{2}}dr\leq C\mathcal{C}_2^2(t-s)^{\zeta}.
\end{split}
\end{equation*}
Note that the integral $\int_0^s(s-r)^{\zeta+\alpha-\frac{d}{2}}dr$ converges when $\zeta<\alpha-\frac{d}{2}+1$. 

\textbf{Estimate of $I_2$:}

The result is straightforward since the heat kernel is always positive. 
\begin{equation*}
\label{I2}
\begin{split}
I_2&\leq \sup_{r,v}\mathbb{E}\left[\sigma(r,v,u(r,v))^2\right]\int_s^t\int_{M^2}P_{t-r}(x,z)P_{t-r}(x,w)G_{\alpha,\rho}(w,z)dwdzdr\\
&\leq C\mathcal{C}_2^2\int_s^t\int_{M^2}P_{t-r}(x,z)P_{t-r}(x,w)G_{\alpha,\rho}(w,z)dwdzdr.\\
&\leq\begin{cases}
    C\mathcal{C}_2^2\int_s^t1dr=C\mathcal{C}_2^2(t-s)\leq C\mathcal{C}_2^2(t-s)^{2\zeta}&\alpha>d/2\\
    C\mathcal{C}_2^2\left((t-s)+\int_s^t(t-r)^{-\frac{d}{2}(1-\frac{1}{p})}dr\right)\leq C\mathcal{C}_2^2(t-s)^{\zeta}&\alpha=d/2\\
    C\mathcal{C}_2^2\int_s^t(t-r)^{\alpha-\frac{d}{2}}dr\leq C\mathcal{C}_2^2(t-s)^{\alpha-\frac{d}{2}+1}\leq C\mathcal{C}_2^2(t-s)^{\zeta}&\alpha<d/2.
\end{cases}
\end{split}
\end{equation*}
Summing the estimates for $I_1$ and $I_2$ completes the proof.
\end{proof}

The regularity estimates for the noise term yield two corresponding upper bounds on its tail probabilities, as detailed below.
\begin{lemma} There exist positive constants $C_1,C_2,C_3,C_4$, independent on $\mathcal{C}_2$ in \eqref{hypothesis2}, such that, for all $0\leq s\leq t\leq 1$, $x,y\in M$, $\alpha>\max\left(\frac{d-2}{2},0\right)$, $\zeta\in\left(0,\min\left(\alpha+1-\frac{d}{2},\frac{1}{2}\right)\right)$ and $\xi\in(0,\min\left(2\alpha-d+2,1\right))$, we have
\begin{equation}
\label{spacep}
P(\vert N(t,x)-N(t,y)\vert>\kappa)\leq\begin{cases}
C_1\exp\left(-\frac{C_2\kappa^2}{\mathcal{C}_2^2d(x,y)^{2\xi}}\right)
    &\alpha>d/2\\
    C_1\exp\left(-\frac{C_2\kappa^2}{\mathcal{C}_2^2d(x,y)^{\xi}}\right) &\alpha\leq d/2,
    \end{cases}
\end{equation}
\begin{equation}
\label{timep}
P(\vert N(t,x)-N(s,x)\vert>\kappa)\leq\begin{cases}
C_3\exp\left(-\frac{C_4\kappa^2}{\mathcal{C}_2^2(t-s)^{2\zeta}}\right)
    &\alpha>d/2\\
   C_3\exp\left(-\frac{C_4\kappa^2}{\mathcal{C}_2^2(t-s)^{\zeta}}\right) &\alpha\leq d/2.
    \end{cases}
\end{equation}
\end{lemma}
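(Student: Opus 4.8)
The plan is to derive both tail bounds from the second-moment estimates of Lemmas \ref{spatialregularity} and \ref{temporalregularity} together with the fact that, for fixed $t$ and $x$ (respectively fixed $s,t,x$), the increment $N(t,x)-N(t,y)$ is a centered Gaussian random variable. This Gaussianity is the crucial structural feature: although the full solution $u$ is non-Gaussian because $\sigma$ depends on $u$, the noise term $N$ defined in \eqref{noise_term} is, for frozen space-time arguments, a Wiener integral $W_{\alpha,\rho}(\Phi)$ of the deterministic (given the solution) integrand $\Phi_s(z) = [P_{t-s}(x,z)-P_{t-s}(y,z)]\,\sigma(s,z,u(s,z))$ against the isonormal Gaussian process $W_{\alpha,\rho}$ from Definition \ref{color_noise}. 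Hence $N(t,x)-N(t,y)$ is a mean-zero Gaussian with variance $\sigma^2_{\text{inc}} := \E[(N(t,x)-N(t,y))^2]$, and similarly for the temporal increment.

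First I would record the elementary Gaussian tail estimate: if $Z\sim\mathcal N(0,v^2)$ then $P(|Z|>\kappa)\le 2\exp(-\kappa^2/(2v^2))$ for all $\kappa>0$. Applying this with $Z=N(t,x)-N(t,y)$ and $v^2=\sigma^2_{\text{inc}}$ gives $P(|N(t,x)-N(t,y)|>\kappa)\le 2\exp(-\kappa^2/(2\sigma^2_{\text{inc}}))$. Next I would invoke Lemma \ref{spatialregularity}, which bounds $\sigma^2_{\text{inc}}$ above by $C\mathcal C_2^2 d(x,y)^{2\xi}$ when $\alpha>d/2$ and by $C\mathcal C_2^2 d(x,y)^{\xi}$ when $\alpha\le d/2$. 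Substituting these upper bounds into the denominator (which only weakens the exponent, hence is valid) yields
\begin{equation*}
P(|N(t,x)-N(t,y)|>\kappa)\le 2\exp\left(-\frac{\kappa^2}{2C\mathcal C_2^2 d(x,y)^{2\xi}}\right)\quad(\alpha>d/2),
\end{equation*}
and the analogous bound with exponent $\xi$ when $\alpha\le d/2$. Setting $C_1=2$ and $C_2=1/(2C)$ (the same $C$ as in Lemma \ref{spatialregularity}) gives \eqref{spacep}. The temporal bound \eqref{timep} is obtained identically, using Lemma \ref{temporalregularity} in place of Lemma \ref{spatialregularity}: $Z=N(t,x)-N(s,x)$ is Gaussian with variance at most $C\mathcal C_2^2(t-s)^{2\zeta}$ for $\alpha>d/2$ and $C\mathcal C_2^2(t-s)^{\zeta}$ for $\alpha\le d/2$, so the Gaussian tail bound produces $C_3,C_4$ with the claimed dependence; importantly $C_2,C_4$ are independent of $\mathcal C_2$ because the $\mathcal C_2^2$ factor is carried explicitly in the denominator, matching the independence claim in the hypotheses of the two regularity lemmas.

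The only point requiring genuine care — and the step I would expect to be the main obstacle to state cleanly — is justifying the Gaussianity of the increment. One must argue that, conditionally on the solution path $\{u(s,z)\}$, or more precisely by a suitable measurability/Walsh-integral argument, $N(t,x)-N(t,y)$ is a Wiener integral against the white-in-time isonormal noise with a predictable integrand, and therefore its law is Gaussian with the stated variance; this is standard in the It\^o-Walsh framework (the stochastic integral of a predictable integrand against a martingale measure that is Gaussian in the spatial variable is itself Gaussian with variance given by the Walsh isometry), and the variance is exactly the quantity bounded in Lemmas \ref{spatialregularity} and \ref{temporalregularity}. Once this is in place, everything else is the one-line Gaussian tail inequality plus substitution. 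I would therefore phrase the proof as: (i) identify the increment as a mean-zero Gaussian via the Walsh isometry and Definition \ref{color_noise}; (ii) apply the standard sub-Gaussian tail bound; (iii) insert the variance estimates of the preceding two lemmas and relabel constants.
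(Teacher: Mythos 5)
Your step (i) --- the claim that $N(t,x)-N(t,y)$ is a centered Gaussian random variable --- is false, and the entire argument rests on it. The parenthetical assertion that ``the stochastic integral of a predictable integrand against a martingale measure that is Gaussian in the spatial variable is itself Gaussian with variance given by the Walsh isometry'' is precisely the misconception that must be avoided here. The integrand $K_s(z)\sigma(s,z,u(s,z))$ is random: it depends on the solution $u$, which is a measurable functional of the very noise $W$ you are integrating against. Conditioning on the solution path does not restore Gaussianity, because it simultaneously conditions on (essentially all of) $W$, destroying the isonormal structure. A one-dimensional model makes the failure transparent: $\int_0^t\sigma(B_s)\,dB_s$ with $\sigma$ bounded and non-constant is an It\^o integral of a predictable integrand against a Gaussian martingale, yet it is not Gaussian (take $\sigma(x)=x$, which yields $\frac12(B_t^2-t)$). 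Without Gaussianity, the standard Gaussian tail bound in your step (ii) has no basis, and the second-moment estimates of Lemmas \ref{spatialregularity} and \ref{temporalregularity} alone would give only a polynomial Chebyshev bound, not an exponential one.

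The correct route, and the one the paper takes, replaces ``Gaussian'' by ``continuous local martingale with bounded quadratic variation.'' For fixed $t,x,y$, the process $s\mapsto\mathcal{N}_s := N_t(s,x)-N_t(s,y)=\int_{[0,s]\times M}K_r(z)\sigma(r,z,u(r,z))\,W(drdz)$ is a continuous local martingale started at $0$, so by the Dambis--Dubins--Schwarz theorem $\mathcal{N}_s = B_{\langle\mathcal{N}\rangle_s}$ for some Brownian motion $B$. Because $|\sigma|\le\mathcal{C}_2$ holds pointwise by Hypothesis \eqref{hypothesis2}, the quadratic variation $\langle\mathcal{N}\rangle_t=\int_0^t\|K_s(\cdot)\sigma(s,\cdot,u(s,\cdot))\|^2_{\alpha,\rho}\,ds$ is bounded almost surely, not merely in expectation, by the same deterministic quantity estimated in the proof of Lemma \ref{spatialregularity}; the reflection principle for $B$ then delivers the sub-Gaussian tail in \eqref{spacep}. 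For the temporal bound \eqref{timep}, the increment $N(t,x)-N(s,x)$ is first split into two local-martingale contributions $U_s$ over $[0,s]$ and $V_{t-s}$ over $[s,t]$, each of which is handled by the same DDS-plus-reflection argument, followed by a union bound. Your conclusion is correct and your use of the two regularity lemmas is in the right spirit, but the mechanism that converts the second-moment bounds into exponential tails must be the martingale time-change argument, not a Gaussianity claim that does not hold.
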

\begin{proof}
For a fixed $t$, we define
\[N_t(s,x):=\int_{[0,s]\times M} P_{t-r}(x-y)\sigma(r,y,u(r,y))W(drdy).\]
Note that $N_t(t,x)=N(t,x)$ as defined in \eqref{noise_term}, and that $N_t(s,x)$ is a continuous $\mathcal{F}_s^W$ adapted $\R$ valued martingale in $s$ since the integrand does not depend on $s$. For fixed $t,x$ and $y$, let
\[\mathcal{N}_s:=N_t(s,x)-N_t(s,y)=\int_{[0,s]\times M} K_r(z)\sigma(r,z,u(r,z))W(drdz),\]
so that $\mathcal{N}_t=N(t,x)-N(t,y)$. Since $\mathcal{N}_s$ is a continuous local martingale with $\mathcal{N}_0=0$, it can be represented as a time-changed Brownian motion, i.e.,
\[\mathcal{N}_t=B_{\langle \mathcal{N}\rangle_t}.\]
Lemma \ref{spatialregularity} provides bounds for $\langle \mathcal{N}\rangle_t$ in three regimes, and applying the reflection principle to the Brownian motion $B_{\langle \mathcal{N}\rangle_t}$, with $x$ and $y$ interchanged, yields the result in \eqref{spacep}.

For a fixed $x$, we define 
\[U_{q_1}:=\int_{[0,{q_1}]\times M} L_r(y)\sigma(r,y,u(r,y))W(drdy)\]
where $0\leq q_1\leq s$. Note that $U_{q_1}$ is a continuous $\mathcal{F}_{q_1}^W$ adapted local martingale with $U_0=0$. In addition, we define
\[V_{q_2}:=\int_{[0,{q_2}]\times M}P_{t-s-r}(x-y)\sigma(r+s,y,u(r+s,y))W(drdy)\]
where $0\leq q_2\leq t-s$. Note that $V_{q_2}$ is also a continuous $\mathcal{F}_{q_2}^W$ adapted local martingale with $V_0=0$. Thus, both $U_{q_1}$ and $V_{q_2}$ admit representations as time-changed Brownian motions, namely,
\[U_s=B_{\langle U\rangle_s}\text{~and~}V_{t-s}=B'_{\langle V\rangle_{t-s}},\]
where $B$, $B'$ are two different Brownian motions. It is clear that $N(t,x)-N(s,x)=U_s+V_{t-s}$, which leads to
\[P(N(t,x)-N(s,x)>\kappa)\leq P(U_s>\kappa/2)+P(V_{t-s}>\kappa/2).\]
Lemma \ref{temporalregularity} provides bounds for $\langle U\rangle_s$ and $\langle V\rangle_{t-s}$ in three regimes, and applying the reflection principle to two Brownian motions, with $s$ and $t$ interchanged, yields the result in \eqref{timep}.
\end{proof}

\begin{definition}
    \label{definition}
    For $n\in\N$, define the Euclidean grid
\begin{equation*}
    G_{n}:=\left\lbrace\left(\frac{j}{2^{2n}},\frac{k_1}{2^n},\dots,\frac{k_d}{2^n}\right)\in\R_+\times\R^d:j,k_1,\dots,k_d\in\Z\right\rbrace.
\end{equation*}
Two Euclidean grid points $y,y'\in G_{n}$ are “nearest neighbors” if they differ by at most one coordinate step:
\begin{enumerate}
    \item Either $y=y'$ in all but one spatial coordinate and $\vert y_i-y_i'\vert=\frac{1}{2^n}$ for that coordinate, or
    \item They differ in one step in time index and match in space.
\end{enumerate}
\end{definition}

The following lemma employs a generic dyadic chaining argument to estimate a tail probability over a small space-time region, carrying the approach of \cite{athreya2021small}. Moreover, the lemma relaxes the exponent of $\varepsilon$, allowing us to obtain exponential bounds on the probability that are valid across different regimes.
\begin{lemma}\label{larged}Given any reference point $p$ on $M$, there exist positive constants $C_5,C_6,\varepsilon_0$, independent of $\mathcal{C}_2$ in \eqref{hypothesis2} and $\varepsilon$, such that for all $\gamma,\kappa$, $0<\varepsilon<\varepsilon_0$ and $\gamma\varepsilon^4\leq 1$, we have
\begin{equation*}
P\left(\sup_{\substack{0<t\leq\gamma\varepsilon^4\\ x\in B_M\left(p,\varepsilon^2\right)}}\vert N(t,x)\vert>\kappa\varepsilon^{m}\right)\leq \frac{C_5}{1\wedge \sqrt{\gamma^d}}\exp\left(-\frac{C_6\kappa^2(\gamma\varepsilon^4)^{g(m)}}{\mathcal{C}_2^2\gamma^{m/2}}\right),
\end{equation*}
where $m\geq \min(1,2\alpha-d+2)$ and $g(m) = \frac{m-\min(1,2\alpha-d+2)}{2}$.
\end{lemma}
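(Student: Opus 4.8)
The plan is to run a dyadic chaining estimate in the spirit of \cite{athreya2021small}, transplanted to $M$ via normal coordinates, exploiting crucially that $N(0,x)=0$ for every $x$. The point is that, after chaining down to a grid point whose time coordinate is $0$, there is no residual ``base point'' term, so $\sup_Q|N|$ over $Q:=(0,\gamma\varepsilon^4]\times B_M(p,\varepsilon^2)$ is controlled by a convergent sum of level-by-level increment maxima (continuity of $N$, needed to pass to the limit along a chain, follows from Lemmas \ref{spatialregularity}--\ref{temporalregularity} and Kolmogorov's criterion). By compactness of $M$ there is $\varepsilon_0>0$ such that for all $p\in M$ and $\varepsilon<\varepsilon_0$ the ball $B_M(p,\varepsilon^2)$ lies in a normal coordinate chart on which Riemannian and Euclidean distances are comparable uniformly in $p$, which lets me pull back the grids $G_n$ of Definition \ref{definition} and the notion of nearest-neighbor pair.

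First I would record the parabolic scale of $Q$, namely $\sqrt{\gamma\varepsilon^4}=\sqrt\gamma\,\varepsilon^2$. When $\gamma\ge1$ the ball $B_M(p,\varepsilon^2)$ (rescaled radius $\gamma^{-1/2}\le1$) sits inside a single parabolic cell, and the argument runs inside one rescaled unit box with the level-$0$ reference point taken at time $0$; when $\gamma<1$ one first tiles $B_M(p,\varepsilon^2)$ by $\asymp\gamma^{-d/2}$ sub-balls of radius $\sqrt\gamma\,\varepsilon^2$, proves the estimate on each, and combines by a union bound --- this is the origin of the factor $C_5/(1\wedge\sqrt{\gamma^d})$. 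Next I would sharpen Lemmas \ref{spatialregularity} and \ref{temporalregularity} on the short window $(0,\gamma\varepsilon^4]$ by rerunning their proofs while retaining the power of $t$ that was discarded there (all the relevant integrals, e.g. $\int_0^t(t-s)^{\alpha-(d+\xi)/2}\,ds$, remain finite under Dalang's condition, and the borderline $\alpha=d/2$ is handled by the same H\"older-in-$L^p$ bound for $\log^-$). This gives, for $(t,x),(s,y)\in Q$, the refined estimates
\[
\mathbb{E}\big[(N(t,x)-N(t,y))^2\big]\le C\mathcal{C}_2^2(\gamma\varepsilon^4)^{\nu}d(x,y)^{\xi},\qquad \mathbb{E}\big[(N(t,x)-N(s,x))^2\big]\le C\mathcal{C}_2^2(\gamma\varepsilon^4)^{\mu}|t-s|^{\zeta},
\]
with $\xi\in(0,\min(2\alpha-d+2,1))$, $\zeta=\xi/2$, and $\nu=\mu=\alpha+1-\frac{d}{2}-\frac{\xi}{2}>0$.

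Since a level-$n$ spatial step in $Q$ has length $\lesssim\sqrt\gamma\,\varepsilon^2 2^{-n}$ and a level-$n$ temporal step has length $\gamma\varepsilon^4 2^{-2n}$, the identity $\nu+\zeta=\mu+\zeta=\alpha+1-\frac{d}{2}\ge\frac{h}{2}$ (with $h=\min(1,2\alpha-d+2)$) collapses both contributions to the same scale: each level-$n$ nearest-neighbor increment has variance $\sigma_n^2\le C\mathcal{C}_2^2(\gamma\varepsilon^4)^{\alpha+1-d/2}2^{-2n\zeta}\le C\mathcal{C}_2^2(\gamma\varepsilon^4)^{h/2}2^{-2n\zeta}$, using $\gamma\varepsilon^4\le1$ (for $\alpha>d/2$ the same computation yields a stronger power of $\gamma\varepsilon^4$). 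The chaining is then routine: with $\Delta_n$ the largest $|N(y)-N(y')|$ over level-$n$ nearest-neighbor pairs, $\sup_Q|N|\le(d+1)\sum_{n\ge0}\Delta_n$; applying the sub-Gaussian tails \eqref{spacep}--\eqref{timep}, a union bound over the $\lesssim2^{n(d+2)}$ pairs per cell, and thresholds $a_n=a_0 2^{-\beta n}$ with $0<\beta<\zeta$ and $(d+1)\sum_n a_n\le\kappa\varepsilon^m$ (so $a_0\asymp\kappa\varepsilon^m$), the series $\sum_n 2^{n(d+2)}\exp(-c a_n^2/\sigma_n^2)$ is dominated by its $n=0$ term. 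Reinstating the $\asymp\gamma^{-d/2}$ union bound for $\gamma<1$ yields
\[
P\!\left(\sup_Q|N|>\kappa\varepsilon^m\right)\le\frac{C_5}{1\wedge\sqrt{\gamma^d}}\exp\!\left(-\frac{C_6\,\kappa^2\,\varepsilon^{2m}}{\mathcal{C}_2^2\,(\gamma\varepsilon^4)^{h/2}}\right)
\]
(trivial after enlarging $C_5$ when the exponent is $O(1)$); since $\varepsilon^{2m}/(\gamma\varepsilon^4)^{h/2}=(\gamma\varepsilon^4)^{(m-h)/2}/\gamma^{m/2}=(\gamma\varepsilon^4)^{g(m)}/\gamma^{m/2}$, this is exactly the claimed bound.

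I expect the main obstacle to be getting the $\gamma$-dependence right uniformly over $\gamma<1$ and $\gamma\ge1$: one must balance the combinatorial prefactor $(1\wedge\sqrt{\gamma^d})^{-1}$ --- the number of parabolic cells needed to cover the fixed radius-$\varepsilon^2$ ball, which blows up as $\gamma\downarrow0$ --- against the fluctuation amplitude $\mathcal{C}_2(\gamma\varepsilon^4)^{h/4}$ on a single cell, which shrinks as $\gamma\downarrow0$; the latter is only visible through the $t$-refined increment estimates, since Lemmas \ref{spatialregularity}--\ref{temporalregularity} as stated are too lossy on a window of length $\gamma\varepsilon^4$. For $\gamma\ge1$ one additionally checks that $B_M(p,\varepsilon^2)$ fits inside a single coarsest cell (rescaled radius $\le1$) and that the temporal refinement is still legitimate on $(0,\gamma\varepsilon^4]$ (since $\gamma\varepsilon^4\le1$ by hypothesis). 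The remaining points --- uniformity of the charts via compactness, the $\log^-$ correction at $\alpha=d/2$, and the fact that $\xi,\zeta$ stay strictly below their endpoints (harmless, as $\varepsilon_0,C_5,C_6$ may depend on the fixed admissible exponents) --- are routine.
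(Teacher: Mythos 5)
Your argument is correct and reaches the stated bound, but it takes a genuinely different route from the paper.  The paper's proof leaves Lemmas \ref{spatialregularity}--\ref{temporalregularity} (and hence \eqref{spacep}--\eqref{timep}) untouched, so its level-$n$ increment tails carry no $\gamma$-dependence at all; the $\gamma$-scaling is instead smuggled in through the chaining budget $\mathcal{M}=\frac{1-2^{-\delta_1}}{(3+d)2^{\delta n_0}}\asymp(\gamma\varepsilon^4)^{\delta/2}$ defined in \eqref{M}, together with the constraint \eqref{deltaconstraint} and a final infimum over the free parameter $\delta=\tfrac{m-2\zeta\wedge\xi}{2}$.  You instead rerun the proofs of Lemmas \ref{spatialregularity} and \ref{temporalregularity} on the window $(0,\gamma\varepsilon^4]$ and keep the power of $t$ the paper discarded, which gives $t$-refined variance bounds and, after parabolic rescaling, the clean level-$n$ estimate $\sigma_n^2\lesssim\mathcal{C}_2^2(\gamma\varepsilon^4)^{\alpha+1-d/2}2^{-2n\zeta}\le\mathcal{C}_2^2(\gamma\varepsilon^4)^{h/2}2^{-2n\zeta}$.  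Your route is arguably the more transparent one: it fixes $\xi,\zeta$ once and for all, so there is no $\inf_\delta$ step whose constants degenerate as $\delta\downarrow g(m)$, and it even reveals that away from $\alpha=d/2$ the exponent $h$ in the lemma could be sharpened to $2(\alpha+1-d/2)$ for free.  Both proofs reduce $\gamma<1$ to $\gamma=1$ by tiling $B_M(p,\varepsilon^2)$ into $\asymp\gamma^{-d/2}$ sub-balls of radius $\sqrt\gamma\,\varepsilon^2$ and rescaling $(\varepsilon,\kappa)\mapsto(\gamma^{1/4}\varepsilon,\kappa\gamma^{-m/4})$, so you align with the paper there.

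Two small imprecisions worth fixing.  First, the displayed identity $\nu=\mu=\alpha+1-\tfrac d2-\tfrac\xi2$ is not literal at $\alpha=d/2$: the H\"older step $\lVert\log^-(d(\cdot,z))P_t(x,\cdot)\rVert_{L^1}\le\lVert P_t\rVert_{L^p}\lVert\log^-\rVert_{L^q}$ leaves a residual $t^{-d/(2q)}$, so $\nu=1-\tfrac\xi2-\tfrac{d}{2q}$ for a fixed admissible $q$; the argument only needs $\nu+\zeta\ge h/2=\tfrac12$, which you can enforce by choosing $q\ge d$, but the formula should carry the caveat.  Second, for $\alpha>d/2$ the spatial H\"older exponent from Lemma \ref{spatialregularity} is $2\xi$, not $\xi$, so the matching convention is $\zeta=\xi$ rather than $\zeta=\xi/2$ in that regime (you flag this in words but the displayed formula doesn't reflect it).  Neither affects the conclusion.
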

\begin{proof} Since $M$ is a compact smooth manifold, there exist $\varepsilon_0>0$ such that for every $p\in M$ and every $0<\varepsilon<\varepsilon_0$, the exponential map at $p$ induces a normal coordinate chart
\begin{equation*}
    \phi_p:=Id\times\exp_p^{-1}:\left[0,\gamma\varepsilon^4\right]\times B_M(p,\varepsilon^2)\to \left[0,\gamma\varepsilon^4\right]\times B_{\R^d}(0,\varepsilon^2),
\end{equation*}
which is a smooth diffeomorphism. Moreover, the Riemannian distance is uniformly bi-Lipschitz equivalent to the Euclidean distance in normal coordinates: for $x,y\in \left[0,\gamma\varepsilon^4\right]\times B_M(p,\varepsilon^2)$,
\begin{equation}
\label{biLip}
    C\vert\phi_p(x)-\phi_p(y)\vert\leq d(x,y)\leq C'\vert\phi_p(x)-\phi_p(y)\vert
\end{equation}
with Lipschitz constants $C,C'>0$ independent of $p$ and $\varepsilon$.

Fix $\gamma\geq 1$. For each $n\in\N$, we define the grid 
\[\mathbb{G}_n=\left\lbrace\left(\frac{j}{2^{2n}},\frac{k_1}{2^{n}},...,\frac{k_d}{2^{n}}\right)\in \left[0,\gamma\varepsilon^4\right]\times B_{\R^d}(0,\varepsilon^2):j,k_1,...,k_d\in\Z\right\rbrace.\]
Set $n_0=\left\lceil \log_2\left(\gamma^{-1/2}\varepsilon^{-2}\right)\right\rceil$, so that 
\begin{equation}
\label{n0}
\log_2\left(2\gamma^{-1/2}\varepsilon^{-2}\right)>n_0\geq\log_2\left(\gamma^{-1/2}\varepsilon^{-2}\right)>n_0-1.
\end{equation}
For $n< n_0$, $\mathbb{G}_n$ contains only the origin. For $n\geq n_0$, using \eqref{n0}, the cardinality of $\mathbb{G}_n$ is bounded by
\begin{equation*}
\#\mathbb{G}_n\leq C(d)\cdot\left(\gamma\varepsilon^4 2^{2n}+1\right)\cdot\left(2\varepsilon^2 2^n+1\right)^d \leq C(d)2^{(2+d)(n-n_0)}.
\end{equation*}
We choose parameters $0<\delta_1(\alpha,d)<\delta_2(\alpha,d)<m/2$, and set $\delta: = \delta_2-\delta_1$. They are chosen to satisfy the following constraint
\begin{equation}
\label{deltaconstraint}
2\zeta\wedge\xi=\begin{cases}
    m-2\delta & \min(0,d/2-1)<\alpha\leq d/2\\
    m/2-\delta & \alpha>d/2
\end{cases}
\end{equation}
for any $\zeta\in\left(0,\min\left(\alpha+1-\frac{d}{2},\frac{1}{2}\right)\right)$ and $\xi\in(0,\min\left(2\alpha-d+2,1\right))$. 

We now focus on the regime $\min(0,d/2-1)<\alpha\leq d/2$. The case $\alpha>d/2$ can be treated analogously and follows easily from the argument below. Define
\begin{equation}
\label{M}
\mathcal{M}=\frac{1-2^{-\delta_1}}{(3+d)2^{\delta n_0}}
\end{equation}
and consider the event
\[A(n,\kappa)=\left\lbrace\vert N(s)-N(q)\vert\leq \kappa \mathcal{M}\varepsilon^{m} 2^{-\delta_1n}2^{\delta_2n_0}\text{ for all nearest neighbors $\phi_p(s),\phi_p(q)\in \mathbb{G}_n$}\right\rbrace.\]
If the pair is nearest in space in the sense of Definition \ref{definition} case $\textbf{1}$, then \eqref{spacep} and \eqref{biLip} imply
\[P\left(\vert N(s)-N(q)\vert> \kappa \mathcal{M}\varepsilon^{m} 2^{-\delta_1n}2^{\delta_2n_0}\right)\leq C_1\exp\left(-\frac{C_2\kappa^2\mathcal{M}^2\varepsilon^{2m}}{2^{-n\xi}\mathcal{C}_2^2}2^{-2\delta_1n}2^{2\delta_2n_0}\right).\]
If the pair is nearest in time in the sense of Definition \ref{definition} case $\textbf{2}$, then \eqref{timep} implies
\[P\left(\vert N(s)-N(q)\vert> \kappa \mathcal{M}\varepsilon^{m}2^{-\delta_1n}2^{\delta_2n_0}\right)\leq C_3\exp\left(-\frac{C_4\kappa^2\mathcal{M}^2\varepsilon^{2m}}{2^{-2n\zeta}\mathcal{C}_2^2}2^{-2\delta_1n}2^{2\delta_2n_0}\right).\]
Therefore, a union bound over all nearest-neighbor pairs in $\mathbb{G}_n$ gives
\begin{align*}
&P(A^c(n,\kappa))\leq \sum_{\substack{\phi_p(s),\phi_p(q)\in \mathbb{G}_n\\ \text{nearest neighbors}}}P\left(\vert N(s)-N(q)\vert> \kappa \mathcal{M}\varepsilon^{m} 2^{-\delta_1n}2^{\delta_2n_0}\right)\\
&\leq C2^{(2+d)(n-n_0)}\exp\left(-\frac{C'\kappa^2\mathcal{M}^2\varepsilon^{2m}}{\mathcal{C}_2^2}2^{n(2\zeta\wedge \xi)}2^{-2\delta_1n}2^{2\delta_2n_0}\right)\\
&=C2^{(2+d)(n-n_0)}\exp\left(-\frac{C'\kappa^2\mathcal{M}^2}{\mathcal{C}_2^2}\left(\varepsilon^{2m} 2^{n_0m}\right)2^{n(2\zeta\wedge\xi)}2^{-2\delta_1n}2^{-n_0m}2^{2\delta_2n_0}\right)\\
&\leq C2^{(2+d)(n-n_0)}\exp\left(-\frac{C'\kappa^2\mathcal{M}^2}{\mathcal{C}_2^2\gamma^{m/2}}2^{(m-2\delta_2)(n-n_0)}\right),
\end{align*}
where $C,C'$ are positive constants depending only on $\alpha,d,M$. The last inequality follows from that $\varepsilon^{4} 2^{2n_0}\geq\frac{1}{2}\gamma^{-1}$ by the definition of $n_0$ in \eqref{n0}, and our choices of $\delta_1,\delta_2$ and $\delta$ in \eqref{deltaconstraint}. Let $A(\kappa)=\bigcap\limits_{n\geq n_0}A(n,\kappa)$ and we can bound $P(A^c(\kappa))$ by summing $P(A^c(n,\kappa))$ for all $n\geq n_0$,
\begin{align*}
P\left(A^c(\kappa)\right)&\leq\sum_{n\geq n_0}P\left(A^c(n,\kappa)\right)\leq \sum_{n\geq n_0}C2^{(2+d)(n-n_0)}\exp\left(-\frac{C'\kappa^2\mathcal{M}^2}{\mathcal{C}_2^2\gamma^{m/2}}2^{(m-2\delta_2)(n-n_0)}\right)\\
&\leq C_5\exp\left(-\frac{C'\kappa^2\mathcal{M}^2}{\mathcal{C}_2^2\gamma^{m/2}}\right).
\end{align*}
From definitions \eqref{n0} and \eqref{M}, it follows that
\begin{align*}
P\left(A^c(\kappa)\right)&\leq C_5\exp\left(-\frac{C'\kappa^22^{-2\delta n_0}}{\mathcal{C}_2^2\gamma^{m/2}}\right)\leq C_5\exp\left(-\frac{C_6\kappa^2(\gamma\varepsilon^4)^\delta}{\mathcal{C}_2^2\gamma^{m/2}}\right)
\end{align*}
for any $\delta=\frac{m-2\zeta\wedge\xi}{2}$. Taking the infimum over admissible $\delta$ yields the optimal bound
\begin{align*}
P\left(A^c(\kappa)\right)&\leq \inf_{\delta}C_5\exp\left(-\frac{C_6\kappa^2(\gamma\varepsilon^4)^\delta}{\mathcal{C}_2^2\gamma^{m/2}}\right)\leq
C_5\exp\left(-\frac{C_6\kappa^2(\gamma\varepsilon^4)^{\frac{m-\min(1,2\alpha-d+2)}{2}}}{\mathcal{C}_2^2\gamma^{m/2}}\right).
\end{align*}

Now for any point $(t,x)$ in $\left[0,\gamma\varepsilon^4\right]\times B_{\R^d}(0,\varepsilon^2)$, whose normal coordinate is in a grid $\mathbb{G}_n$ for some $n \geq n_0$, a standard chaining argument similar to page 128 of \cite{dalang2009minicourse} yields a path from the $(0,p)$ to $(t,x)$ as $(0,p)= p_0,p_1,...,p_l = (t,x)$ such that the normal coordinate of each pair $p_jp_{j+1}$ is the nearest neighbor in some grid $\mathbb{G}_k, n_0\leq k \leq n$. For each grid, at most $3$ temporal and $d$ spatial increments occur. On the event $A(\kappa)$, we obtain
\begin{align*}
\vert N(t,x)\vert\leq \sum_{j=0}^{l-1}\vert N(p_j)-N(p_{j+1})\vert\leq (3+d)\sum_{n\geq n_0}\kappa \mathcal{M}\varepsilon^{m} 2^{-\delta_1n}2^{\delta_2n_0}\leq\kappa\varepsilon^{m} .
\end{align*}
Points in $\bigcup\limits_n\mathbb{G}_n$ are dense in $[0,\gamma\varepsilon^4]\times B_{\R^d}(0,\varepsilon^2)$, and we may extend $N(t,x)$ to a continuous version. Hence, for $\gamma\geq 1$,
\[
P\left(\sup_{\substack{0\leq t\leq\gamma\varepsilon^4\\ x\in B_M\left(p,\varepsilon^2\right)}}\vert N(t,x)\vert>\kappa\varepsilon^{m}\right)\leq P(A^c(\kappa))\leq  C_5\exp\left(-\frac{C_6\kappa^2(\gamma\varepsilon^4)^{\frac{m-\min(1,2\alpha-d+2)}{2}}}{\mathcal{C}_2^2\gamma^{m/2}}\right).
\]
For $0<\gamma<1$, the time interval $\left[0,\gamma\varepsilon^4\right]$ is shorter relative to the spatial scale, so a direct application of the previous grid method is not optimal. We therefore cover the geodesic ball $B_M(p,\varepsilon^2)$ by $\frac{C}{\sqrt{\gamma^d}}$ pieces of smaller ball with radius $\sqrt{\gamma}\varepsilon^2$. If the supremum over the whole region is large, then it must be large in at least one sub-ball, so a union bound implies that
\[P\left(\sup_{\substack{0\leq t\leq\gamma\varepsilon^4\\ x\in B_M\left(p,\varepsilon^2\right)}}\vert N(t,x)\vert>\kappa\varepsilon^{m}\right)\leq \frac{C}{\sqrt{\gamma^d}} P\left(\sup_{\substack{0\leq t\leq\gamma\varepsilon^4\\ x\in B_M\left(p,\sqrt{\gamma}\varepsilon^2\right)}}\vert N(t,x)\vert>\kappa\varepsilon^{m}\right)\]
\[=\frac{1}{\sqrt{\gamma^d}}P\left(\sup_{\substack{0\leq t\leq(\sqrt{\gamma}\varepsilon^2)^2\\ x\in B_M\left(p,\sqrt{\gamma}\varepsilon^2\right)}}\vert N(t,x)\vert>\frac{\kappa}{\gamma^{m/4}}\left(\gamma^{1/4}\varepsilon\right)^{m}\right)\leq \frac{C_5}{\sqrt{\gamma^d}}\exp\left(-\frac{C_6\kappa^2(\gamma\varepsilon^4)^{g(m)}}{ \mathcal{C}_2^2\gamma^{m/2}}\right).\]
Combining both cases completes the proof.
\end{proof}

\begin{remark}
\label{largeremark}
If $m$ is sufficiently large so that $g(m)>0$, then letting $\varepsilon\downarrow0$ only yields a trivial upper bound, and no meaningful exponential tail estimate can be expected in this regime. Nevertheless, a nontrivial estimate may still be recovered when the noise bound $\mathcal{C}_2$ is sufficiently small.
\end{remark}

\section{Proof of Proposition \ref{prop}}
\label{proofprop}
\subsection*{Proof of Proposition \ref{prop}(a)} Recall the definitions of $F_n$ in \eqref{Fn} and $f(t_1)$ in \eqref{f_t}. By the Markov property of the solution $u(t,\cdot)$ (see Theorem 9.14, p.248 of \cite{da2014stochastic}), we have
\begin{equation*}
P\left(F_{j}\vert\sigma\{u(t_i,\cdot)\}_{0\leq i<j}\right)=P\left(F_{j}\vert u(t_{j-1},\cdot)\right).
\end{equation*}
If we can show that $P\left(F_{j}\vert u(t_{j-1},\cdot)\right)$ admits a uniform bound of the form independent of $j$, then the same bound holds for $P\left(F_{j}\Big\vert \bigcap\limits_{k=0}^{j-1}F_{k}\right)$, since this conditional probability is evaluated under a realization of ${u(t_k,\cdot)}_{0\le k<j}$. Consequently, it suffices to establish
\begin{equation}
\label{F1}
P(F_1)\leq \begin{cases}
    \textbf{C}_4\exp\left(-\frac{\textbf{C}_5}{\varepsilon^2}\right)&(d-1)/2<\alpha\leq d/2\\
    \textbf{C}_4\exp\left(-\frac{\textbf{C}_5}{t_1^{\min\left(\frac{d-2\alpha}{2d},\frac{2\alpha+2-d}{2}\right)}}\right)&\max(0,d/2-1)<\alpha\leq(d-1)/2,
    \end{cases}
\end{equation}
where $\textbf{C}_4$, $\textbf{C}_5$ are independent of $u_0$, under the assumption that $\vert u(t,x)\vert < \sqrt{f(t_1)}$ for all $(t,x)\in R_{0,J}$ as defined in \eqref{Rij}. 

To obtain a uniform bound for $P(F_1)$, we introduce the truncated function
\[\bar{f}_{t_1}(x)=\begin{cases}
x & \vert x\vert\leq \sqrt{f(t_1)}\\
\frac{x}{\vert x\vert}\cdot \sqrt{f(t_1)} & \vert x\vert> \sqrt{f(t_1)}.
\end{cases}\]
By construction, $|\bar{f}_{t_1}(x)|\leq \sqrt{f(t_1)}$. Consider the following two SPDEs with common initial condition $u_0$:
\begin{equation}
\begin{split}
\label{shef}
&\partial_tv(t,x)=\frac{1}{2}\Delta_Mv(t,x)+\sigma(t,x,\bar{f}_{t_1}(v(t,x))) \dot{W}(t,x),\\
&\partial_tv_g(t,x)=\frac{1}{2}\Delta_Mv_g(t,x)+\sigma(t,x,\bar{f}_{t_1}(u_0(x))) \dot{W}(t,x).
\end{split}
\end{equation}
Here $v_g$ is a Gaussian random field. We decompose $v(t,x)$ into
\[v(t,x)=v_g(t,x)+D(t,x),\]
where
\[D(t,x)=\int_{[0,t]\times M} P_{t-s}(x,y)[\sigma(s,y,\bar{f}_{t_1}(v(s,y)))-\sigma(s,y,\bar{f}_{t_1}(u_0(y)))]W(dsdy).\]
By the Lipschitz continuity of $\sigma$ in its third variable (Assumption \eqref{hypothesis1}),
\begin{equation}
\label{sigmadiff1}
\vert \sigma(s,y,\bar{f}_{t_1}(v(s,y)))-\sigma(s,y,\bar{f}_{t_1}(u_0(y)))\vert\leq \mathcal{D}\vert \bar{f}_{t_1}(v(s,y))-\bar{f}_{t_1}(u_0(y))\vert\leq 2\mathcal{D}\sqrt{f(t_1)}.
\end{equation}

We recall that $R_{i,j}$ in \eqref{Rij} and define a new sequence of events,
\[H_j=\left\lbrace\vert v(t,x)\vert\leq \sqrt{f(t_1)},\forall (t,x)\in R_{1,j}\setminus R_{1,j-1}\right\rbrace.\]
Clearly, by the properties of $\bar f_{t_1}$ and \eqref{Fn},
\[F_{1}=\bigcap_{j=1}^{J}H_j.\]
Introduce auxiliary events
\[A_j=\left\lbrace\vert v_g(t,x)\vert\leq 2\sqrt{f(t_1)},\forall (t,x)\in R_{1,j}\setminus R_{1,j-1}\right\rbrace\]
and
\[B_j=\left\lbrace\vert D(t,x)\vert>\sqrt{f(t_1)}, \text{~for some~}(t,x)\in R_{1,j}\setminus R_{1,j-1}\right\rbrace.\]
It is straightforward to check that $H_j^c\supset A_j^c\cap B_j^c$, which implies
\begin{equation}
\label{sumprob}
\begin{split}
P(F_{1})&= P\left(\bigcap_{j=1}^{J}H_j\right)\leq P\left(\bigcap_{j=1}^{J}[A_j\cup B_j]\right)\leq P\left(\left(\bigcap_{j=1}^{J}A_j\right)\bigcup\left(\bigcup_{j=1}^{J}B_j\right)\right)\\
&\leq P\left(\bigcap_{j=1}^{J}A_j\right)+\sum_{j=1}^{J} P(B_j),
\end{split}
\end{equation}
where the second inequality can be shown by induction. 

For $j\geq1$,
\begin{equation*}
\begin{split}
B_j&\subseteq \left\lbrace\sup_{(t,x)\in R_{1,j}\setminus R_{1,j-1}}\vert D(t,x)\vert>\sqrt{f(t_1)}\right\rbrace\\
&\subseteq\bigcup_{(t,x)\in R_{1,j}\setminus R_{1,j-1}}\left\lbrace\sup_{\substack{0\leq s\leq c_0\varepsilon^4\\ y\in B_M(x,\varepsilon^2)}}\vert D(s,y)\vert>\sqrt{f(t_1)}\right\rbrace,\\
\end{split}
\end{equation*}
which yields a union bound 
\begin{equation}
    \label{probB}
    \sum_{j=1}^{J}P(B_j)\leq\frac{C}{\varepsilon^{2d}}P\left(\sup_{\substack{0\leq s\leq c_0\varepsilon^4\\ y\in B_M(x_0,\varepsilon^2)}}\vert D(s,y)\vert>\sqrt{f(t_1)}\right).
\end{equation}

\textbf{Case 1: $\max(0,d/2-1)<\alpha\leq(d-1)/2$}

In this case, $m=2\alpha-d+2$ in Lemma \ref{larged}. By \eqref{sigmadiff1} and Remark \ref{largeremark},
\begin{equation*}
P\left(\sup_{\substack{0\leq s\leq c_0\varepsilon^4\\ y\in B_M(x,\varepsilon^2)}}\vert D(s,y)\vert>\sqrt{f(t_1)}\right)\leq \frac{C_5}{1\wedge \sqrt{{c_0}^d}}\exp\left(-\frac{C_6}{4\mathcal{D}^2t_1^{\alpha+1-d/2}}\right).
\end{equation*}
Hence,
\begin{equation*}
\sum_{j=1}^{J}P(B_j)
\leq\frac{C(d)}{\varepsilon^{2d}}\cdot\frac{C_5}{1\wedge \sqrt{{c_0}^d}}\exp\left(-\frac{C_6}{4\mathcal{D}^2t_1^{\alpha+1-d/2}}\right).
\end{equation*}

\textbf{Case 2: $(d-1)/2<\alpha<d/2$}

In this regime, $m=1$ in Lemma \ref{larged}.
\begin{equation*}
P\left(\sup_{\substack{0\leq s\leq c_0\varepsilon^4\\ y\in B_M(x,\varepsilon^2)}}\vert D(s,y)\vert>\sqrt{f(t_1)}\right)\leq \frac{C_5}{1\wedge \sqrt{{c_0}^d}}\exp\left(-\frac{C_6t_1^{\alpha+(d-1)/2}}{4\mathcal{D}^2t_1^{\alpha+1-d/2}}\right),
\end{equation*}
which yields
\begin{equation*}
\sum_{j=1}^{J}P(B_j)\leq\frac{C(d)}{\varepsilon^{2d}}\cdot\frac{C_5}{1\wedge \sqrt{{c_0}^d}}\exp\left(-\frac{C_6}{4\mathcal{D}^2t_1^{1/2}}\right).
\end{equation*}

\textbf{Case 3: $\alpha=d/2$}

A similar computation produces
\begin{equation*}
    \begin{split}
        P\left(\sup_{\substack{0\leq s\leq c_0\varepsilon^4\\ y\in B_M(x,\varepsilon^2)}}\vert D(s,y)\vert>\sqrt{f(t_1)}\right)&\leq P\left(\sup_{\substack{0\leq s\leq c_0\varepsilon^4\\ y\in B_M(x,\varepsilon^2)}}\vert D(s,y)\vert>\sqrt{c_0\ln(1/c_0)\varepsilon^4}\right)\\
        &\leq \frac{C_5}{1\wedge \sqrt{{c_0}^d}}\exp\left(-\frac{C_6\ln(1/c_0)}{4\mathcal{D}^2t_1^{1/2}\ln(1/t_1)}\right),
    \end{split}
\end{equation*}
and therefore
\begin{equation*}
\sum_{j=1}^{J}P(B_j)\leq\frac{C(d)}{\varepsilon^{2d}}\cdot\frac{C_5}{1\wedge \sqrt{{c_0}^d}}\exp\left(-\frac{C_6\ln(1/c_0)}{4\mathcal{D}^2t_1^{1/2}\ln(1/t_1)}\right).
\end{equation*}

Note that in the equation defining $v_g$, the coefficient $\sigma$ is deterministic; consequently, $v_g$ is a Gaussian random field. The following lemma establishes bounds on the variance of the noise term $N(t_1,x)$, as well as an upper bound on the covariance between $N(t_1,x)$ and $N(t_1,y)$.
\begin{lemma}
    Let $N(t,x)$ and $N(t,y)$ be noise terms corresponding to a deterministic coefficient $\sigma(t,x,u)=\sigma(t,x)$, and let $t\downarrow0$. Then there exist positive constants $C_7,C_8,C_9$ such that
    \begin{equation}
    \label{varbound}
        C_7f(t)\leq \text{Var}[N(t,x)]\leq C_8f(t),
    \end{equation}
    where $f(t)$ is defined in \eqref{f_t}. Moreover,
    \begin{equation}
    \label{covbound}
        \text{Cov}[N(t,x)N(t,y)]\leq\begin{cases}
        C_9t & \alpha>d/2\\
        C_9t(1+\log^-(d(x,y))) & \alpha=d/2\\
        C_9td(x,y)^{2\alpha-d} & \max(0,d/2-1)<\alpha<d/2.
        \end{cases}
    \end{equation}
\end{lemma}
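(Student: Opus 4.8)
The plan is to pass through the It\^o--Walsh isometry to rewrite both quantities as deterministic space-time integrals, to strip off the coefficient $\sigma$ using Hypothesis~\eqref{hypothesis2}, to reduce everything to the single ``free kernel''
$K_r(x,y):=\int_{M^2}P_r(x,z)P_r(y,w)G_{\alpha,\rho}(z,w)\,dz\,dw$, and then to evaluate $K_r$ exactly by subordination and the heat kernel bounds of Section~\ref{estimates}.

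\emph{Step 1 (isometry and removal of $\sigma$).} Since $\sigma$ is deterministic, $N(t,\cdot)$ is Gaussian and, by Definition~\ref{color_noise} and \eqref{covker},
\[
\mathrm{Cov}\big[N(t,x),N(t,y)\big]=\int_0^t\int_{M^2}P_{t-s}(x,z)P_{t-s}(y,w)\,\sigma(s,z)\sigma(s,w)\,G_{\alpha,\rho}(z,w)\,dz\,dw\,ds,
\]
with $\mathrm{Var}[N(t,x)]$ the case $y=x$. Writing $\sigma\sigma=\mathcal{C}_1^2+(\sigma\sigma-\mathcal{C}_1^2)$ with $0\le\sigma\sigma-\mathcal{C}_1^2\le\mathcal{C}_2^2-\mathcal{C}_1^2$, and using the lower bound $G_\alpha\ge-C_0$ from \cite{chen2025parabolic} (so that $(G_{\alpha,\rho})^-\le C_0$ and $|G_{\alpha,\rho}|\le G_{\alpha,\rho}+2C_0$) together with $\int_{M^2}P_r(x,z)P_r(y,w)\,dz\,dw=1$, one gets
\[
\mathcal{C}_1^2\int_0^t K_{t-s}(x,y)\,ds-Ct\;\le\;\mathrm{Cov}\big[N(t,x),N(t,y)\big]\;\le\;\mathcal{C}_2^2\int_0^t K_{t-s}(x,y)\,ds+Ct,
\]
the lower inequality being used only for $x=y$; for $\alpha>d/2$ I would instead get the cleaner $\mathrm{Var}[N(t,x)]\ge\rho m_0^{-1}\mathcal{C}_1^2\,t$ directly by retaining only the zeroth spectral mode (using $\sigma\ge\mathcal{C}_1$ and $\int_M P_r(x,y)\,dy=1$). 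So everything reduces to two-sided control of $\int_0^t K_{t-s}(x,x)\,ds$ and an upper bound on $\int_0^t K_{t-s}(x,y)\,ds$.

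\emph{Step 2 (subordination and heat kernel bounds).} Using $G_{\alpha,\rho}=\rho/m_0+G_\alpha$, the representation $G_\alpha(z,w)=\Gamma(\alpha)^{-1}\int_0^\infty\tau^{\alpha-1}(P_\tau(z,w)-m_0^{-1})\,d\tau$, the semigroup property and symmetry of the heat kernel, and Fubini (legitimate for fixed $r>0$, since $P_\tau(x,x)-m_0^{-1}=\sum_{n\ge1}e^{-\lambda_n\tau}\phi_n(x)^2\ge0$ near $\tau=0$ and decays exponentially for $\tau$ large by \eqref{heatkernelbound2}), I obtain the key identity
\[
K_r(x,y)=\frac{\rho}{m_0}+\frac{1}{\Gamma(\alpha)}\int_0^\infty\tau^{\alpha-1}\Big(P_{2r+\tau}(x,y)-\frac{1}{m_0}\Big)\,d\tau .
\]
Now \eqref{heatkernelbound1} gives $P_t(x,x)\le(2\pi t)^{-d/2}+C$ and, since $P_t(x,x)\ge m_0^{-1}$, the matching on-diagonal lower bound $P_t(x,x)\ge\tfrac12(2\pi t)^{-d/2}$ for small $t$, uniformly in $x$ (from the Minakshisundaram--Pleijel expansion \cite{Minakshisundaram:1953xh}); for $x\ne y$ one keeps the Gaussian factor $e^{-d(x,y)^2/(2(2r+\tau))}$ from \eqref{heatkernelbound1}. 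Splitting the $\tau$-integral at $\tau=2r$ and $\tau=1$, substituting $\tau=2rv$ (resp. $v=d(x,y)^2/(4\tau)$ and using $w^{d/2-\alpha}e^{-w/4}\le C$), one finds, for small $r$, that $K_r(x,x)$ is comparable to $1$, to $\log(1/r)$, or to $r^{\alpha-d/2}$ in the regimes $\alpha>d/2$, $\alpha=d/2$, $\max(0,d/2-1)<\alpha<d/2$ respectively (the exponent $\alpha-d/2>-1$, by Dalang's condition, making $r\mapsto K_r(x,x)$ integrable near $0$), and that
\[
K_r(x,y)\le C\Big(1+\mathbf{1}_{\{\alpha\le d/2\}}\log^-(d(x,y))+\mathbf{1}_{\{\alpha<d/2\}}\,d(x,y)^{2\alpha-d}\Big)
\]
uniformly in $r\le1$.

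\emph{Conclusion and main obstacle.} Integrating the bounds on $K_{t-s}(x,x)$ over $s\in[0,t]$ turns $\int_0^t K_{t-s}(x,x)\,ds$ into a quantity comparable to $t$, $t\log(1/t)$, or $t^{\alpha+1-d/2}$ respectively, i.e.\ comparable to $f(t)$; feeding this into Step~1 (and, when $\alpha\le d/2$, using $f(t)/t\to\infty$ to absorb the $\pm Ct$ error as $t\downarrow0$) yields \eqref{varbound}, while the $K_r(x,y)$ bound integrates to \eqref{covbound}. The delicate point is the lower bound in \eqref{varbound} in the subcritical and critical regimes, where one must recover the exact power or logarithm of $t$ rather than a mere $\mathrm{Var}\ge ct$: this is precisely where the uniform on-diagonal lower bound $P_t(x,x)\gtrsim t^{-d/2}$ is needed (it is not among the quoted estimates and must be supplied), and where the lower bound $G_\alpha\ge-C_0$ is essential to keep the $-Ct$ error under control so it can be absorbed into $cf(t)$. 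The covariance bound and the variance upper bound are, by comparison, routine bookkeeping once the subordination identity of Step~2 is in hand; the only care needed there is tracking the Gaussian factor $e^{-d(x,y)^2/\cdots}$ so that the stated $d(x,y)$-dependence comes out.
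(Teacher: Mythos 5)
Your proposal follows the same route as the paper: Walsh isometry to a deterministic integral, then the semigroup identity plus the subordination representation of $G_{\alpha,\rho}$ to arrive at the key formula $K_r(x,y)=\rho/m_0+\Gamma(\alpha)^{-1}\int_0^\infty\tau^{\alpha-1}\big(P_{2r+\tau}(x,y)-m_0^{-1}\big)\,d\tau$, followed by the small-time Minakshisundaram bound \eqref{heatkernelbound1} and the large-time bound \eqref{heatkernelbound2} with a split of the $\tau$-integral, and finally the regime analysis $\alpha\gtrless d/2$ to recover $f(t)$. You differ from the paper in a few sub-steps, each of which is a reasonable (arguably cleaner) variant: you make the lower bound $G_\alpha\ge-C_0$ explicit to control sign issues, whereas the paper tacitly relies on the remark that $\rho$ can be taken large; for the covariance upper bound you derive a uniform-in-$r$ bound on $K_r(x,y)$ and then integrate in $s$, whereas the paper performs a change of variables $u=2s+r$ and an explicit $I_1+I_2$ split; and for the variance lower bound when $\alpha>d/2$ you retain only the zeroth spectral mode to get $\mathrm{Var}[N(t,x)]\ge\rho m_0^{-1}\mathcal{C}_1^2 t$ directly, which is a clean observation the paper does not use. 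You also correctly flag the one genuinely unstated ingredient: the paper's lower bound in the critical and subcritical regimes hinges on the on-diagonal lower bound $P_t(x,x)-m_0^{-1}\gtrsim t^{-d/2}$ for small $t$, which is used implicitly in the paper's computation but is not among the quoted heat kernel estimates; supplying it from the Minakshisundaram--Pleijel expansion, as you suggest, is indeed what is needed.
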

\begin{proof}
    By Definition \ref{color_noise}, together with the uniform upper bound on $\sigma$ in \eqref{hypothesis2}, the representation of $G_{\alpha,\rho}$ in Proposition \ref{covker_bound}, and the heat kernel bounds in \eqref{heatkernelbound1},\eqref{heatkernelbound2} applied after splitting the $r$-integration, we obtain
    \begin{equation*}
    \begin{split}
        \text{Var}[N(t,x)] &= \int_{[0,t]\times M^2}P_{t-s}(x,y)P_{t-s}(x,z)\sigma(s,y)\sigma(s,z)G_{\alpha,\rho}(y,z)dydzds\\
        &\leq C\mathcal{C}_2^2\left[t+ \int_0^t\int_0^\infty r^{\alpha-1}\left(P_{2s+r}(x,x)-\frac{1}{m_0}\right)drds\right]\\
        &\leq C\mathcal{C}_2^2\left[t+\int_0^t\int_{1-2s}^\infty r^{\alpha-1}e^{-C(2s+r)}drds +\int_0^t\int_0^1r^{\alpha-1}(2s+r)^{-d/2}drds\right]\\
        & \leq C\mathcal{C}_2^2\left[t+t^{\alpha+1-d/2}\int_0^1\int_0^{\frac{1}{t}}w^{\alpha-1}(2v+w)^{-d/2}dwdv \right].
    \end{split}
    \end{equation*}
    The behavior of the remaining integral as $t\downarrow0$ depends on $\alpha-d/2$:

    $\bullet$ If $\alpha>d/2$,  
    \begin{equation*}
        t^{\alpha+1-d/2}\int_0^1\int_0^{\frac{1}{t}}w^{\alpha-1}(2v+w)^{-d/2}dwdv \leq t^{\alpha+1-d/2}\int_0^1\int_0^{\frac{1}{t}}w^{\alpha-1-d/2}dwdv=t.
    \end{equation*}

    $\bullet$ If $\alpha=d/2$, one checks in all subcases ($\alpha=1$, $\alpha>1$, or $\alpha=1/2$) that
    \begin{equation*}
        t^{\alpha+1-d/2}\int_0^1\int_0^{\frac{1}{t}}w^{\alpha-1}(2v+w)^{-d/2}dwdv\overset{(\alpha=1)}{=}t\int_0^1\int_0^{\frac{1}{t}}(2v+w)^{-1}dwdv\leq Ct\ln(1/t);
    \end{equation*}
    \begin{equation*}
    \begin{split}
        t^{\alpha+1-d/2}&\int_0^1\int_0^{\frac{1}{t}}w^{\alpha-1}(2v+w)^{-d/2}dwdv\overset{(\alpha>1)}{=}t\int_0^1\int_0^{\frac{1}{t}}w^{\alpha-1}(2v+w)^{-\alpha}dwdv\\
        &=t\int_0^1\int_0^{\frac{1}{1+2vt}}\frac{x^{\alpha-1}}{1-x}dxdv\leq t\int_0^1\int_0^{\frac{1}{1+2vt}}\frac{1}{1-x}dxdv\leq Ct\ln(1/t);
    \end{split}
    \end{equation*}
    \begin{equation*}
    \begin{split}
        t^{\alpha+1-d/2}&\int_0^1\int_0^{\frac{1}{t}}w^{\alpha-1}(2v+w)^{-d/2}dwdv\overset{(\alpha=1/2)}{=}t\int_0^1\int_0^{\frac{1}{t}}w^{-\frac{1}{2}}(2v+w)^{-\frac{1}{2}}dwdv\\
        &=t\int_0^1\ln\left(\frac{1+\sqrt{\frac{1}{1+2vt}}}{1-\sqrt{\frac{1}{1+2vt}}}\right)dv\leq t\ln\left(\frac{2}{1-\sqrt{\frac{1}{1+2t}}}\right)\leq Ct\ln(1/t).
    \end{split}
    \end{equation*}

    $\bullet$ If $\alpha<d/2$, the integral converges when $t\downarrow0$,
    \begin{equation*}
        t^{\alpha+1-d/2}\int_0^1\int_0^{\frac{1}{t}}w^{\alpha-1}(2v+w)^{-d/2}dwdv \leq Ct^{\alpha+1-d/2}.
    \end{equation*}
    
Using the uniform lower bound on $\sigma$, restricting to small $r$ and $t<1/4$, and proceeding as above, we obtain
     \begin{equation*}
    \begin{split}
        \text{Var}[N(t,x)] &= \int_{[0,t]\times M^2}P_{t-s}(x,y)P_{t-s}(x,z)\sigma(s,y)\sigma(s,z)G_{\alpha,\rho}(y,z)dydzds\\
        &\geq C\mathcal{C}_1^2\left[t+\int_0^t\int_0^\infty r^{\alpha-1}\left(P_{2s+r}(x,x)-\frac{1}{m_0}\right)drds\right]\\
        &\geq C\mathcal{C}_1^2\left[t-\int_0^t\int_{1-2s}^\infty r^{\alpha-1}e^{-C(2s+r)}drds +\int_0^t\int_0^{\frac{1}{2}}r^{\alpha-1}(2s+r)^{-d/2}drds\right]\\
        & \geq C\mathcal{C}_1^2\left[t+t^{\alpha+1-d/2}\int_0^1\int_0^{\frac{1}{2t}}w^{\alpha-1}(2v+w)^{-d/2}dwdv \right].
    \end{split}
    \end{equation*}

    For $\alpha\ge d/2$, the last integral produces a contribution of order $t$ (or $t\log(1/t)$ when $\alpha=d/2$):
    \begin{equation*}
    \begin{split}
         t^{\alpha+1-d/2}&\int_0^1\int_0^{\frac{1}{2t}}w^{\alpha-1}(2v+w)^{-d/2}dwdv \geq t^{\alpha+1-d/2}\int_0^1\int_0^{\frac{1}{2t}}w^{\alpha-1}(2+w)^{-d/2}dwdv\\
         &\geq t^{\alpha+1-d/2}\int_{1}^{\frac{1}{2t}}w^{\alpha-1}(2+w)^{-d/2}dw\geq Ct^{\alpha+1-d/2}\int_{1}^{\frac{1}{2t}}w^{\alpha-1-d/2}dw,
    \end{split}
    \end{equation*}
    and the lower bound follows. When $\max(0,d/2-1)<\alpha<d/2$, it converges and yields a contribution of order $t^{\alpha+1-d/2}$:
    \begin{equation*}
        t^{\alpha+1-d/2}\int_0^1\int_0^{\frac{1}{2t}}w^{\alpha-1}(2v+w)^{-d/2}dwdv \geq Ct^{\alpha+1-d/2},
    \end{equation*}
    which completes the proof of \eqref{varbound}.

    The covariance satisfies
    \begin{equation*}
    \begin{split}
        \text{Cov}[N(t,x)N(t,y)] &= \int_{[0,t]\times M^2}P_{t-s}(x,w)P_{t-s}(y,z)\sigma(s,w)\sigma(s,z)G_{\alpha,\rho}(w,z)dwdzds\\
        &\leq C\mathcal{C}_2^2\left[t+ \int_0^t\int_0^\infty r^{\alpha-1}\left(P_{2s+r}(x,y)-\frac{1}{m_0}\right)drds\right]\\
        &\leq C\mathcal{C}_2^2\left[t+\int_0^t\int_{1-2s}^\infty r^{\alpha-1}e^{-C(2s+r)}drds+\int_0^t\int_0^{1-2s}r^{\alpha-1}drds\right.\\
        &\quad\left.+\int_0^t\int_0^{1-2s}r^{\alpha-1}(2s+r)^{-d/2}\exp\left(-\frac{d(x,y)^2}{4s+2r}\right)drds\right].
    \end{split}
    \end{equation*}
    Denote the last term by
    \begin{equation*}
        I(t) = \int_0^t\int_0^{1-2s}r^{\alpha-1}(2s+r)^{-d/2}\exp\left(-\frac{d(x,y)^2}{4s+2r}\right)drds.
    \end{equation*}
After the change of variables $u=2s+r$ and splitting the integration domain, one obtains
    \begin{equation*}
        \begin{split}
             I(t)&=C\int_0^t\int_{2s}^{1}(u-2s)^{\alpha-1}u^{-d/2}\exp\left(-\frac{d(x,y)^2}{2u}\right)duds\\
             &=C\int_0^{1}u^{-d/2}\exp\left(-\frac{d(x,y)^2}{2u}\right)\left(\int_{0}^{\min(t,u/2)}(u-2s)^{\alpha-1}ds\right)du\\
             &\leq C\left[\int_0^{4t}u^{\alpha-d/2}\exp\left(-\frac{d(x,y)^2}{2u}\right)du\right.\\
             &\quad \left.+\int_{4t}^1u^{-d/2}\exp\left(-\frac{d(x,y)^2}{2u}\right)\left(\int_{0}^{t}(u-2s)^{\alpha-1}ds\right)du\right]\\
            &:=I_1(t)+I_2(t).
        \end{split}
    \end{equation*}
$I_1$ can be bounded by
\begin{equation*}
    I_1(t)\leq Cd(x,y)^{2\alpha-d+2}\int_{\frac{d(x,y)^2}{8t}}^\infty v^{-\alpha+d/2-2}e^{-v}dv\leq C\frac{t^{\alpha-d/2+2}}{d(x,y)^2}\exp\left(-\frac{d(x,y)^2}{8t}\right)\leq Ct.
\end{equation*}
 Since $u-2t\in(2t,u)\subset(u/2,u)$ when $u\in(4t,1)$, by the mean value theorem, we obtain
 \begin{equation*}
     \int_{0}^{t}(u-2s)^{\alpha-1}ds\leq Ctu^{\alpha-1},
 \end{equation*}
and $I_2$ can be bounded by
\begin{equation*}
    I_2(t)\leq Ct\int_0^{1}u^{\alpha-1-d/2}\exp\left(-\frac{d(x,y)^2}{2u}\right)du.
\end{equation*}
    Applying Proposition \ref{covker_bound} (Lemma 2.9 of \cite{brosamler1983laws}) yields
    \begin{equation*}
        I_2(t)\leq\begin{cases}
    Ct &\alpha>d/2\\
    Ct(1+\log^-(d(x,y))) &\alpha=d/2\\
    Ct d(x,y)^{2\alpha-d} &\alpha<d/2.
\end{cases}
    \end{equation*}
Combining these estimates proves \eqref{covbound} and completes the proof.
\end{proof}

We now turn to computing an upper bound for $P\left(\bigcap\limits_{j=1}^{J}A_j\right)$ in \eqref{sumprob}. To this end, define a sequence of events involving $v_g$:
\[I_j=\left\lbrace\vert v_g(t,x)\vert\leq 2\sqrt{f(t_1)}, \forall(t,x)\in R_{1,j}\right\rbrace\text{ for $j\geq 1$ and $I_{0}=\Omega$.}\]
Then, we can express $P\left(\bigcap\limits_{j=1}^{J}A_j\right)$ as a product of conditional probabilities:
\begin{equation}
\label{Acond}
P\left(\bigcap_{j=1}^{J}A_j\right)=P(I_{J})=P(I_{0})\prod_{j=1}^{J}\frac{P(I_j)}{P(I_{j-1})}=\prod_{j=1}^{J}P(I_j\vert I_{j-1}).
\end{equation}
Let $\mathcal{G}_j$ denote the $\sigma-$algebra generated by 
\[N_{(t_1)}(t,x):=\int_{[0,t]\times M} P_{t-s}(x,y)\sigma(s,y,\bar{f}_{t_1}(u_0(y)))W(dsdy),~~(t,x)\in R_{1,j},\]
which represents the noise term in $v_g(t_1,x)$ (see \eqref{shef}). Once we establish a uniform bound for $P\left(I_j\vert \mathcal{G}_{j-1}\right)$, the same bound holds for $P\left(I_j\vert I_{j-1}\right)$. Since $\sigma(s,y,\bar{f}_{t_1}(u_0(y)))$ is deterministic and uniformly bounded, Lemma \ref{varbound} gives
\begin{equation*}
\textrm{Var}\left[N_{(t_1)}(t_1,x)\right]\geq C_7 f(t_1).
\end{equation*}
For $(t,x)\in R_{1,j}\setminus R_{1,j-1}$, one may decompose
\begin{equation}
\label{vgdecom}
v_g(t,x)=\int_{M}P_{t}(x,y)u_0(y)dy+X+Y,
\end{equation}
where 
\begin{equation}
\label{Xdecom}
X=\E\left[N_{(t_1)}(t,x)\vert \mathcal{G}_{j-1}\right]=\sum_{(t,x)\in R_{1,j-1}}\eta^{(j)}(t,x)N_{(t_1)}(t,x)
\end{equation}
is a Gaussian variable expressed as a linear combination of generators of $\mathcal{G}_{j-1}$, and
\begin{equation*}
Y=N_{(t_1)}(t,x)-X.
\end{equation*}
By construction, $Y$ is independent of $\mathcal{G}_{j-1}$, so that
\begin{align*}
&\textrm{Var}(Y\vert\mathcal{G}_{j-1})=\textrm{Var}[N_{(t_1)}(t,x)\vert \mathcal{G}_{j-1}]=\textrm{Var}(Y).
\end{align*}
Applying Anderson's inequality \cite{anderson1955integral}, for a Gaussian random variable $Z \sim N(\mu,\sigma^2)$ and any $a >0$, the probability $P(\vert Z\vert \leq a)$ is maximized at $\mu = 0$. Hence
\begin{equation}
\label{probA}
\begin{split}
P\left(I_j\vert \mathcal{G}_{j-1}\right)&\leq P\left(\vert v_g(t,x)\vert\leq 2\sqrt{f(t_1)}, (t,x)\in R_{1,j}\setminus R_{1,j-1}\bigg| \mathcal{G}_{j-1}\right)\\
&\leq P\left(\left\vert Z'\right\vert\leq \frac{2\sqrt{f(t_1)}}{\sqrt{\textrm{Var}[N_{(t_1)}(t,x)\vert \mathcal{G}_{j-1}]}}\right)
\end{split}
\end{equation}
where $Z'\sim N(0,1)$. Let's use the notation $\textrm{SD}$ to denote the standard deviation of a random variable. Using the Minkowski inequality applied to \eqref{vgdecom} and \eqref{Xdecom}, we have
\[\textrm{SD}[N_{(t_1)}(t,x)]\leq \textrm{SD}(X)+\textrm{SD}(Y)\]
and
\[\textrm{SD}(X)\leq \sum_{(t,x)\in R_{1,j-1}}\left\vert\eta^{(j)}(t,x)\right\vert\cdot \textrm{SD}[N_{(t_1)}(t,x)].\]
By choosing the coefficients to satisfy
\[\sum_{(t,x)\in R_{1,j-1}}\left\vert\eta^{(j)}(t,x)\right\vert<\frac{\sqrt{C_7}}{2\sqrt{C_8}},\]
and applying Lemma \ref{varbound}, we obtain
\begin{equation*}
\begin{split}
\textrm{SD}(X)&\leq \sum_{(t,x)\in R_{1,j-1}}\left\vert\eta^{(j)}(t,x)\right\vert\cdot \textrm{SD}[N_{(t_1)}(t,x)]\\
&\leq \left(\sum_{(t,x)\in R_{1,j-1}}\left\vert\eta^{(j)}(t,x)\right\vert\right)\cdot\sup_{(t,x)\in R_{1,j-1}}\textrm{SD}[N_{(t_1)}(t,x)]\\
&<\frac{\sqrt{C_7}}{2\sqrt{C_8}}\cdot \sqrt{C_8f(t_1)}=\frac{\sqrt{C_7f(t_1)}}{2}.
\end{split}
\end{equation*}
Therefore, $\textrm{SD}(Y)$ can be bounded below by
\begin{equation*}
\textrm{SD}(Y)\geq \textrm{SD}(N_{(t_1)}(t,x)) - \textrm{SD}(X)>\frac{\sqrt{C_7f(t_1)}}{2}.
\end{equation*}
Thus, \eqref{probA} yields a uniform bound:
\begin{equation*}
\begin{split}
P(I_j\vert \mathcal{G}_{j-1})&\leq P\left(\vert Z'\vert\leq \frac{2\sqrt{f(t_1)}}{\sqrt{\textrm{Var}[N_{(t_1)}(t,x)\vert \mathcal{G}_{j-1}]}}\right)\\
&\leq P\left(\vert Z'\vert\leq \frac{4\sqrt{f(t_1)}}{\sqrt{C_7 f(t_1)}}\right)\\
&=P\left(\vert Z'\vert\leq C'\right)<1,
\end{split}
\end{equation*}
where $C'$ depends only on $d$, $\alpha$, and $M$. Combining this with \eqref{Acond} gives
\begin{equation}
\label{probAbound}
P\left(\bigcap_{j=1}^{J}A_j\right)\leq C^{C'\varepsilon^{-2}}=\exp\left(-\frac{C}{\varepsilon^2}\right),
\end{equation}
where $C$ depends only on $d$, $\alpha$, and $M$. The following lemma shows how to select $c_0$ to ensure $\sum\limits_{(t,x)\in R_{1,j-1}}\vert\eta^{(j)}(t,x)\vert\leq \frac{\sqrt{C_7}}{2\sqrt{C_8}}$, thus completing the proof.

\begin{lemma}\label{coeffbound}
For a given $\varepsilon>0$, one can choose $c_0>0$ in \eqref{c0} such that
\begin{equation*}
\sum_{(t,x)\in R_{1,j-1}}\vert\eta^{(j)}(t,x)\vert\leq \frac{\sqrt{C_7}}{2\sqrt{C_8}}.
\end{equation*}
\end{lemma}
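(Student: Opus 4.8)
The plan is to recognize $\eta^{(j)}$ as the coefficient vector of an $L^2$-projection and then exploit diagonal dominance of the underlying covariance matrix. Recall from \eqref{vgdecom}--\eqref{Xdecom} that $X=\mathbb{E}[N_{(t_1)}(t,x)\mid\mathcal{G}_{j-1}]$ is the $L^2$-projection of the centered Gaussian variable $N_{(t_1)}(t_1,x)$ (for the grid point $(t_1,x)\in R_{1,j}\setminus R_{1,j-1}$ fixed there) onto $\operatorname{span}\{N_{(t_1)}(t_1,x_k):x_k\ \text{a spatial point of }R_{1,j-1}\}$; every time coordinate here equals $t_1$. Hence $\eta^{(j)}$ solves the normal equations $\Sigma\,\eta^{(j)}=b$, where $\Sigma_{kl}=\operatorname{Cov}\!\big(N_{(t_1)}(t_1,x_k),N_{(t_1)}(t_1,x_l)\big)$ and $b_l=\operatorname{Cov}\!\big(N_{(t_1)}(t_1,x),N_{(t_1)}(t_1,x_l)\big)$. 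Since the frozen coefficient $\sigma(\cdot,\cdot,\bar{f}_{t_1}(u_0(\cdot)))$ is deterministic with $\mathcal{C}_1\le\sigma\le\mathcal{C}_2$, estimate \eqref{varbound} gives $C_7 f(t_1)\le\Sigma_{kk}\le C_8 f(t_1)$ for all $k$, while \eqref{covbound} bounds the off-diagonal entries, which are moreover nonnegative because $G_{\alpha,\rho}\ge0$, $P_t\ge0$ and $\sigma>0$. Writing $\Sigma=D+E$ with $D=\operatorname{diag}(\Sigma)$, we have $\Sigma=D(I+D^{-1}E)$ and $\eta^{(j)}=(I+D^{-1}E)^{-1}D^{-1}b$, so it suffices to control $\theta:=\|D^{-1}E\|_{1\to1}$ and $\|D^{-1}b\|_1$.

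The core estimate is that, uniformly in $l$,
\[
\sum_{k\ne l}\operatorname{Cov}\!\big(N_{(t_1)}(t_1,x_k),N_{(t_1)}(t_1,x_l)\big)\ \le\
\begin{cases} C\,C_9\, t_1\,\varepsilon^{-2d} & \max(0,d/2-1)<\alpha<d/2,\\[2pt] C\,C_9\, t_1\,\varepsilon^{-2d}\log(1/\varepsilon) & \alpha=d/2, \end{cases}
\]
and the same bound holds for $\sum_l b_l$. I would prove this by a geodesic shell decomposition about $x_l$: since $\{x_k\}$ is $\varepsilon^2$-separated in $M$, Bishop--Gromov volume comparison (already used in Lemma \ref{heat_kernel_integral_bounds}) bounds the number of $x_k$ with $d(x_k,x_l)\in[2^m\varepsilon^2,2^{m+1}\varepsilon^2)$ by $C\,2^{md}$, and since $d(x_k,x_l)\le\operatorname{diam}(M)$ only the shells with $2^m\lesssim\varepsilon^{-2}$ are nonempty, so in particular $\#\{x_k\}\le C\varepsilon^{-2d}$. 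For $\alpha<d/2$ one inserts $\operatorname{Cov}\le C_9 t_1(2^m\varepsilon^2)^{2\alpha-d}$ and sums the geometric series $\sum_m 2^{md}(2^m\varepsilon^2)^{2\alpha-d}=\varepsilon^{2(2\alpha-d)}\sum_m 2^{2m\alpha}\le C\varepsilon^{-2d}$ (dominated by the outermost shells since $\alpha>0$); for $\alpha=d/2$ one uses the crude uniform bound $\log^-(d(x_k,x_l))\le\log^-(\varepsilon^2)=2\log(1/\varepsilon)$ together with $\#\{x_k\}\le C\varepsilon^{-2d}$. Dividing by $\min_k\Sigma_{kk}\ge C_7 f(t_1)$, substituting $t_1=c_0\varepsilon^4$ and the shape of $f$ from \eqref{f_t} gives
\[
\theta\ \le\ \begin{cases}\dfrac{C\,C_9}{C_7}\,c_0^{\,d/2-\alpha}\,\varepsilon^{-4\alpha} & \alpha<d/2,\\[10pt] \dfrac{C\,C_9}{C_7}\cdot\dfrac{\varepsilon^{-2d}\log(1/\varepsilon)}{\log\!\big(1/(c_0\varepsilon^4)\big)} & \alpha=d/2,\end{cases}
\]
and the same right-hand side also bounds $\|D^{-1}b\|_1$.

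To conclude, I would choose $c_0$ in the range permitted by \eqref{c0}. For $\alpha<d/2$, taking $c_0\le C'\varepsilon^{8\alpha/(d-2\alpha)}$ yields $c_0^{\,d/2-\alpha}\varepsilon^{-4\alpha}\le (C')^{d/2-\alpha}$, which is as small as desired once $C'$ is small; for $\alpha=d/2$, taking $c_0\le\exp\!\big(-C'\varepsilon^{-2d}\log(1/\varepsilon)\big)$ yields $\log(1/(c_0\varepsilon^4))\ge C'\varepsilon^{-2d}\log(1/\varepsilon)$, hence $\theta\le C C_9/(C_7 C')$, which is small once $C'$ is large. Fixing the numerical constant in \eqref{c0} small enough (resp.\ large enough) we force $\theta\le\tfrac12$ and $\|D^{-1}b\|_1\le\tfrac12\cdot\tfrac{\sqrt{C_7}}{2\sqrt{C_8}}$. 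In particular $I+D^{-1}E$ is invertible, so $\Sigma$ is positive definite and $\eta^{(j)}$ is the unique solution of the normal equations, and by the Neumann series $\|(I+D^{-1}E)^{-1}\|_{1\to1}\le(1-\theta)^{-1}\le2$. Therefore
\[
\sum_{(t,x)\in R_{1,j-1}}\big|\eta^{(j)}(t,x)\big|=\|\eta^{(j)}\|_1\le\|(I+D^{-1}E)^{-1}\|_{1\to1}\,\|D^{-1}b\|_1\le\frac{\sqrt{C_7}}{2\sqrt{C_8}},
\]
as claimed.

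The step I expect to be the main obstacle is the uniform shell-counting estimate for the covariance sum: one must marry the three regime-dependent bounds of \eqref{covbound} with volume comparison on $M$ so that the emerging power of $\varepsilon$, together with the extra logarithm when $\alpha=d/2$, is absorbed \emph{exactly} by the admissible choice of $c_0$ in \eqref{c0}. This matching requirement is precisely what dictates the peculiar form of \eqref{c0}, in particular its doubly exponential smallness at the critical exponent $\alpha=d/2$.
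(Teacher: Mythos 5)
Your proposal is correct and follows essentially the same approach as the paper's proof: both realize $\eta^{(j)}$ as the solution of the normal equations for the covariance matrix $\Sigma$, split off the diagonal and show the off-diagonal part is small in the induced $\ell_1$ operator norm via dyadic geodesic shells and Bishop--Gromov counting, then close with a Neumann series and the admissible choice of $c_0$ in \eqref{c0}. The only cosmetic deviations are that the paper symmetrizes as $\Sigma=\mathbf{D}\mathbf{T}\mathbf{D}$ rather than your $\Sigma=D(I+D^{-1}E)$ (equivalent up to bounded constants, as the diagonal entries are comparable by \eqref{varbound}), at $\alpha=d/2$ the paper sums the logarithm over shells while you use the coarser uniform bound $\log^-(d(x_k,x_l))\le 2\log(1/\varepsilon)$ (same order), and your aside that the off-diagonals are nonnegative is unnecessary and holds only after choosing $\rho$ large enough that $G_{\alpha,\rho}\ge 0$.
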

\begin{proof}
Let $X$ and $Y$ be defined as in \eqref{vgdecom} and \eqref{Xdecom}. Since $Y$ is independent of $\mathcal{G}_{j-1}$, we have, for all $(t,x)\in R_{1,j-1}$,
\[\textrm{Cov}[Y,N_{(t_1)}(t,x)]=0,\]
and for $(t,y)\in R_{1,j}\setminus R_{1,j-1}$,
\begin{equation}
\label{noisecov}
\begin{split}
\textrm{Cov}[N_{(t_1)}(t,x),N_{(t_1)}(t,y)]&=\textrm{Cov}[N_{(t_1)}(t,x),X]\\
&=\sum_{(t,x')\in R_{1,j-1}}\eta^{(j)}(t,x') \textrm{Cov}[N_{(t_1)}(t,x),N_{(t_1)}(t,x')].
\end{split}
\end{equation}
Writing \eqref{noisecov} in matrix form gives
\begin{equation*}
\textbf{X} =\Sigma\eta,
\end{equation*}
where $\eta=\left(\eta^{(j)}(t,x)\right)_{(t,x)\in R_{1,j-1}}^T$, $\textbf{X}=\left\lbrace\textrm{Cov}[N_{(t_1)}(t,x),N_{(t_1)}(t,y)]\right\rbrace_{(t,x)\in R_{1,j-1}}^T,$
and $\Sigma$ is the covariance matrix of $\left(N_{(t_1)}(t,x)\right)_{(t,x)\in R_{1,j-1}}$. Let $\vert\vert\cdot\vert\vert_{1,1}$ denote the matrix norm induced by the $l_1$-vector norm, that is for a matrix $\textbf{A}$,
\[\vert\vert\textbf{A}\vert\vert_{1,1}:=\sup_{\textbf{x}\neq \textbf{0}}\frac{\vert\vert \textbf{Ax}\vert\vert_{l_1}}{\vert\vert\textbf{x}\vert\vert_{l_1}}.\]
It can be shown that $\vert\vert \textbf{A}\vert\vert_{1,1}=\max\limits_j\sum\limits_{i=1}^n\vert a_{ij}\vert$ (see page 259 of \cite{rao2000linear}). Then
\begin{equation*}
\vert\vert \eta\vert\vert_{l_1}=\vert\vert\Sigma^{-1}\textbf{X}\vert\vert_{l_1}\leq\vert\vert\Sigma^{-1}\vert\vert_{1,1}\vert\vert\textbf{X}\vert\vert_{l_1}.
\end{equation*}

Decompose $\Sigma=\textbf{D}\textbf{T}\textbf{D}$, where $\textbf{D}$ is a diagonal matrix with entries $\sqrt{\textrm{Var}[N_{(t_1)}(t,x)]}$, and $\textbf{T}$ is the correlation matrix with entries
\[e_{t_1}(x,x')=\frac{\textrm{Cov}[N_{(t_1)}(t,x),N_{(t_1)}(t,x')]}{\sqrt{\textrm{Var}[N_{(t_1)}(t,x)]}\cdot\sqrt{\textrm{Var}[N_{(t_1)}(t,x')]}}.\]
Thanks to Lemma \ref{varbound}, for $x\neq x'$,
\[\vert e_{t_1}(x,x')\vert\leq \begin{cases}
    \frac{C_9}{C_7} & \alpha>d/2;\\
    \frac{C_9(1+\log^-(d(x,x')))}{C_7\ln(1/t_1)} & \alpha = d/2;\\
    \frac{C_9d(x,x')^{2\alpha-d}}{C_7t_1^{\alpha-d/2}} & \alpha < d/2.
\end{cases}\]
For a fixed $x_j\in\{x_k\}$, sum over all other points:
\begin{equation*}
\begin{split}
    \sum_{x'\neq x_j}\vert e_{t_1}(x_j,x')\vert\leq\begin{cases}
CN(\varepsilon)&\alpha>d/2;\\
\frac{C}{\ln(1/t_1)}\sum\limits_{x'\in\{x_k\}}(1+\log^-(d(x_j,x')))&\alpha=d/2;\\
Ct_1^{d/2-\alpha}\sum\limits_{x'\in\{x_k\}}d(x_j,x')^{2\alpha-d}&\alpha<d/2.
\end{cases}
\end{split}
\end{equation*}

For $\alpha>d/2$, the total sum of the correlation coefficients diverges at the rate $\varepsilon^{-2d}$ in the limit $\varepsilon\downarrow0$. For the latter two cases, we partition points into dyadic shells around $x_j$:
\begin{equation*}
    A_m(x_j):=\left\lbrace x\in\{x_k\}:2^m\varepsilon^2\leq d(x_j,x)<2^{m+1}\varepsilon^2\right\rbrace, \quad m\geq 0,
\end{equation*}
with $\vert A_m(x_j)\vert\leq C2^{d(m+1)}$. When $\alpha<d/2$,
\begin{equation*}
    \begin{split}
        \sum_{x'\neq x_j}\vert e_{t_1}(x_j,x')\vert&\leq Ct_1^{d/2-\alpha}\sum_{m=0}^{m_{max}}\sum_{x\in A_m(x_j)}d(x_j,x)^{2\alpha-d}\\
        &\leq C2^dt_1^{d/2-\alpha}\varepsilon^{4\alpha-2d}\sum_{m=0}^{m_{max}}2^{2m\alpha}\\
        &\leq Ct_1^{d/2-\alpha}\varepsilon^{-2d}.
    \end{split}
\end{equation*}

When $\alpha=d/2$,
\begin{equation*}
    \begin{split}
        \sum_{x'\neq x_j}\vert e_{t_1}(x_j,x')\vert&\leq \frac{C}{\ln(1/t_1)}\sum_{m=0}^{m_{max}}\sum_{x\in A_m(x_j)}(1+\ln(1/d(x_j,x)))\\
        &\leq \frac{C2^d}{\ln(1/t_1)}\sum_{m=0}^{m_{max}}2^{md}\left(1+\ln(1/\varepsilon^2)-m\ln 2\right)\\
        &\leq \frac{C\varepsilon^{-2d}\ln(1/\varepsilon)}{\ln(1/t_1)}.
    \end{split}
\end{equation*}

Let $\textbf{A}=\textbf{I}-\textbf{T}$ with zero diagonal entries, 
\begin{equation}
\label{matrixA}
\vert\vert\textbf{A}\vert\vert_{1,1}=\max_{(t,x)\in R_{1,j-1}}\sum_{x'\neq x}\vert e_{t_1}(x,x')\vert\leq\begin{cases}
\frac{C\varepsilon^{-2d}\ln(1/\varepsilon)}{\ln(1/t_1)} & \alpha=d/2\\
    Ct_1^{d/2-\alpha}\varepsilon^{-2d} & \alpha<d/2.
\end{cases}
\end{equation}
Choose $\varepsilon>0$ such that $\vert\vert\textbf{A}\vert\vert_{1,1}<\phi<1$. For instance,
\begin{equation}
\label{selectc0}
c_0<\begin{cases}
\exp\left(C\varepsilon^{-2d}\ln(\varepsilon)\right)&\alpha=d/2\\
C\varepsilon^{\frac{8\alpha}{d-2\alpha}}&\alpha<d/2,
\end{cases}
\end{equation}
and such $c_0$ will produce
\begin{equation*}
\vert\vert\textbf{T}^{-1}\vert\vert_{1,1}=\vert\vert(\textbf{I}-\textbf{A})^{-1}\vert\vert_{1,1}\leq \frac{1}{1-\vert\vert\textbf{A}\vert\vert_{1,1}}<\frac{1}{1-\phi},
\end{equation*}
and $\vert\vert\Sigma^{-1}\vert\vert_{1,1}\leq \vert\vert\textbf{D}^{-1}\vert\vert_{1,1}\cdot\vert\vert\textbf{T}^{-1}\vert\vert_{1,1}\cdot\vert\vert\textbf{D}^{-1}\vert\vert_{1,1}\leq \frac{1}{C_7(1-\phi)f(t_1)}$. By Lemma \eqref{varbound}, $\vert\vert\textbf{X}\vert\vert_{l_1}$ can be bounded above by
\begin{equation*}
\vert\vert\textbf{X}\vert\vert_{l_1}\leq\begin{cases}
    \sum\limits_{(t,x)\in R_{1,j-1}}C_9t_1(1+\log^-(d(x,y)) & \alpha=d/2\\
    \sum\limits_{(t,x)\in R_{1,j-1}}C_9t_1d(x,y)^{2\alpha-d}&\alpha<d/2,
\end{cases}
\end{equation*}
where $(t,y)\in R_{1,j}\setminus R_{1,j-1}$. As how we compute the upper bound of $\vert\vert\textbf{A}\vert\vert_{1,1}$ in \eqref{matrixA}, we obtain
\begin{equation*}
\begin{split}
\vert\vert\eta\vert\vert_{l_1}&\leq \frac{1}{C_7(1-\phi)f(t_1)}\vert\vert\textbf{X}\vert\vert_{l_1}<\frac{\phi}{1-\phi}\rightarrow 0 \quad \text{as } \phi \downarrow 0.
\end{split}
\end{equation*}
Hence, with this choice of $c_0$,
\begin{equation*}
\sum_{(t,\textbf{x})\in R_{1,j-1}}\vert\eta^{(j)}(t,\textbf{x})\vert\leq \frac{\sqrt{C_7}}{2\sqrt{C_8}}.
\end{equation*}
\end{proof}

For $\alpha=d/2$, combining \eqref{sumprob} and \eqref{probAbound} gives
\begin{equation*}
\begin{split}
P(F_{1})&\leq \frac{C(d)C_5}{(1\wedge \sqrt{{c_0}^d})\varepsilon^{2d}}\exp\left(-\frac{C_6\ln(1/c_0)}{4\mathcal{D}^2t_1^{1/2}\ln(1/t_1)}\right)+\exp\left(-\frac{C}{\varepsilon^2}\right)\\
&\leq \frac{C}{t_1^{d/2}}\exp\left(-\frac{C_6\ln(1/c_0)}{4\mathcal{D}^2t_1^{1/2}\ln(1/t_1)}\right)+\exp\left(-\frac{C}{\varepsilon^2}\right),
\end{split}
\end{equation*}
and for a sufficiently small $\varepsilon$, this yields
\begin{align*}
P(F_1)&\leq \textbf{C}_4\exp\left(-\frac{\textbf{C}_5}{\varepsilon^2}\right).
\end{align*}

For $(d-1)/2<\alpha<d/2$, 
\begin{equation*}
\begin{split}
P(F_{1})&\leq \frac{C(d)C_5}{(1\wedge \sqrt{{c_0}^d})\varepsilon^{2d}}\exp\left(-\frac{C_6}{4\mathcal{D}^2t_1^{1/2}}\right)+\exp\left(-\frac{C}{\varepsilon^2}\right)\\
&\leq \frac{C}{t_1^{d/2}}\exp\left(-\frac{C_6}{4\mathcal{D}^2t_1^{1/2}}\right)+\exp\left(-\frac{C}{\varepsilon^2}\right).
\end{split}
\end{equation*}
Using $\varepsilon^2=\left(Ct_1\right)^{\frac{d-2\alpha}{2d}}$ from \eqref{c0}, for a small enough $\varepsilon$ and $\mathcal{D}>0$, we conclude
\begin{align*}
P(F_1)&\leq C\exp\left(-\frac{C'}{4\mathcal{D}^2t_1^{1/2}}\right)+\exp\left(-\frac{C}{\varepsilon^2}\right)\\
&\leq\textbf{C}_4\exp\left(-\frac{\textbf{C}_5}{\varepsilon^2}\right).
\end{align*}

Finally, for $\max(0,d/2-1)<\alpha\leq (d-1)/2$,
\begin{equation*}
\begin{split}
P(F_{1})&\leq \frac{C(d)C_5}{(1\wedge \sqrt{{c_0}^d})\varepsilon^{2d}}\exp\left(-\frac{C_6}{4\mathcal{D}^2t_1^{\alpha+1-d/2}}\right)+\exp\left(-\frac{C}{\varepsilon^2}\right)\\
&\leq C\exp\left(-\frac{C'}{\mathcal{D}^2t_1^{\alpha+1-d/2}}\right)+\exp\left(-\frac{C}{\varepsilon^2}\right)\\
&\leq\textbf{C}_4\exp\left(-\frac{\textbf{C}_5}{t_1^{\min\left(\frac{d-2\alpha}{2d},\frac{2\alpha+2-d}{2}\right)}}\right),
\end{split}
\end{equation*}
which completes the proof of \eqref{F1} as well as Proposition \ref{prop}(a).

\subsection*{Proof of Proposition \ref{prop}(b)} The following lemma is a key ingredient in the proof of Proposition \ref{prop}(b).
\begin{lemma}[The Gaussian correlation inequality]\label{Gaussiancorr}For any convex symmetric sets $K, L$ in $\R^d$ and any centered Gaussian measure $\mu$ on $\R^d$, we have
\[
\mu(K\cap L)\geq \mu(K)\mu(L).
\]
\end{lemma}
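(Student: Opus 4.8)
The statement is the Gaussian correlation inequality, proved in full generality by Royen, and the plan is to follow the structure of his argument rather than attempt anything new. \textbf{Step 1: reduce to intersections of symmetric slabs.} First I would use that every open symmetric convex set in $\R^d$ is an increasing union of finite intersections of symmetric slabs $\{x:|\langle x,v\rangle|<c\}$, so that by inner regularity of $\mu$ it suffices to treat $K=\bigcap_{i=1}^m\{|\langle x,u_i\rangle|\le a_i\}$ and $L=\bigcap_{j=1}^{n}\{|\langle x,w_j\rangle|\le b_j\}$. Writing $\mu$ as the law of $Z\sim N(0,\Sigma)$ and setting $X_i=\langle Z,u_i\rangle$ for $i\le m$, $X_{m+j}=\langle Z,w_j\rangle$ for $j\le n$, the vector $(X_1,\dots,X_{m+n})$ is centered Gaussian with some covariance $\Gamma$, which after adding $\delta I$ and letting $\delta\downarrow0$ at the very end I may assume positive definite. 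The claim becomes
\[
P(|X_k|\le c_k\ \text{for all }1\le k\le m+n)\ \ge\ P(|X_k|\le c_k\ \text{for }k\le m)\cdot P(|X_k|\le c_k\ \text{for }m<k\le m+n),
\]
where $(c_k)$ lists the constants $a_i$ and $b_j$.

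\textbf{Step 2: interpolate the off-diagonal block.} Next I would split the index set as $\{1,\dots,m\}\sqcup\{m+1,\dots,m+n\}$, write $\Gamma$ in block form with diagonal blocks $A,B$ and off-diagonal block $C$, and for $\tau\in[0,1]$ let $\Gamma_\tau$ be $\Gamma$ with $C$ replaced by $\tau C$. Since $\Gamma_\tau=(1-\tau)\,\mathrm{diag}(A,B)+\tau\,\Gamma$ is a convex combination of positive semidefinite matrices it is a legitimate covariance, and shrinking $\delta$ if necessary it is positive definite for every $\tau$. Put $g(\tau)=P_{\Gamma_\tau}(|X_k|\le c_k,\ \forall k)$; then $g(0)$ is exactly the product on the right-hand side above (the two coordinate blocks become independent Gaussians) while $g(1)$ is the left-hand side, so it is enough to prove that $g$ is nondecreasing on $[0,1]$.

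\textbf{Step 3: positivity of $g'$.} This is where the real work lies. Rewriting the event as $\{Y_k\le c_k^2,\ \forall k\}$ with $Y_k=X_k^2$, the vector $(Y_1,\dots,Y_{m+n})$ has a central multivariate gamma distribution of shape $1/2$, with joint Laplace transform $\det(I+2\,\mathrm{diag}(s)\,\Gamma_\tau)^{-1/2}$. Differentiating $g$ under the integral and applying Jacobi's formula to $\partial_\tau\det$ writes $g'(\tau)$ as a finite sum, over pairs $i\le m<j$, of an entry of $C$ times a mixed derivative of the multivariate gamma distribution function. Royen's lemma supplies an explicit Laplace- (equivalently contour-) integral representation of the multivariate gamma density, and from it one reads off that each of these terms has the sign forcing $g'(\tau)\ge0$; summing over the pairs gives $g(1)\ge g(0)$, i.e.\ $\mu(K\cap L)\ge\mu(K)\mu(L)$, and unwinding Step~1 finishes the proof.

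\textbf{Main obstacle.} Steps 1 and 2 are routine: outer approximation of convex bodies by slab intersections with monotone convergence, plus the elementary identity $\Gamma_\tau=(1-\tau)\mathrm{diag}(A,B)+\tau\Gamma$. The hard part is Step 3, proving that every derivative of the multivariate gamma distribution function appearing in $g'(\tau)$ carries the correct sign; this is precisely Royen's 2014 theorem, whose proof turns on a delicate integral representation of the multivariate gamma density together with careful sign bookkeeping, and which I would import as a black box rather than reprove here.
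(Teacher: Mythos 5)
Your proposal and the paper take essentially the same approach: the paper's ``proof'' is simply a citation of Royen \cite{royen2014simple} and of the Lata{\l}a--Matlak exposition \cite{latala2017royen}, and your sketch is a faithful outline of Royen's argument (reduction to symmetric slabs, interpolation $\Gamma_\tau$ of the off-diagonal covariance block, and monotonicity of $g(\tau)$ via the multivariate gamma distribution), with the crux---the sign of the mixed derivatives in $g'(\tau)$---imported from Royen exactly as the paper does. Both treat the inequality as an external black-box ingredient, which is the appropriate stance here since reproving it would be a substantial digression.
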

\begin{proof}
See in paper \cite{royen2014simple} and \cite{latala2017royen}. 
\end{proof}

By the Markov property of $u(t,\cdot)$, the behavior of the solution on the interval $[t_n,t_{n+1}]$ depends only on $u(t_n,\cdot)$ and on the noise $\dot{W}(t,x)$ restricted to $[t_n,t]\times M$. Arguing as in the proof of Proposition \ref{prop}(a) and recalling $\mathcal{C}_3$ in \eqref{c3}, it suffices to show that
\begin{equation}
\label{E0}
P\left(E_{0}\right)\geq  \textbf{C}_6\exp\left(-\frac{\textbf{C}_7}{t_1^{d/2}}\right),
\end{equation}
where the constants $\textbf{C}_6$, $\textbf{C}_7$ are independent of $u_0$, provided that $\vert u_0(x)\vert\leq \frac{\mathcal{C}_3}{3}t_1^{h/4}$ for all $x\in M$ with $h=\min(1,2\alpha-d+2)$ .

We prove \eqref{E0} in two steps. First, we establish the estimate in the Gaussian case. We then extend the analysis to the non-Gaussian setting and demonstrate that the approximation error between the Gaussian and non-Gaussian cases can be effectively controlled by an appropriate choice of the time intervals over which the coefficients are frozen.

\subsection*{Deterministic $\sigma$}
We begin with the Gaussian case, where $\sigma(t,x,u)$ is deterministic and depends only on $(t,x)$. For $n \geq 0$, we define the events
\begin{equation}
\label{Dsequence}
D_n=\left\lbrace\vert u(t_{n+1},x)\vert\leq \frac{\mathcal{C}_3}{6}t_1^{h/4},\text{and}~\vert u(t,x)\vert\leq \frac{2\mathcal{C}_3}{3}t_1^{h/4},~\forall t\in[t_n,t_{n+1}],x\in M\right\rbrace.
\end{equation}
Let
\[P_t(u_0)(x):=P_{t}(\cdot,\cdot)*u_0(x)=\int_{M}P_{t}(x,y)u_0(y)dy,\]
then
\begin{equation}
\label{u0convolute}
\vert P_t(u_0)(x)\vert\leq \sup_{x\in M}\vert u_0(x)\vert\leq \frac{\mathcal{C}_3}{3}t_1^{h/4}.
\end{equation}

Suppose $\theta:[0,{t_1}]\times\Omega\to\mathcal{H}^{\alpha,\rho}$ is an $\mathcal{F}_t$-progressively measurable process satisfying:
\begin{enumerate}
    \item Integrability:
    \begin{equation*}
        \int_0^{t_1}\Vert \theta_t\Vert^2_{\mathcal{H}^{\alpha,\rho}}dt<\infty\quad a.s.
    \end{equation*}
    \item Novikov's condition:
    \begin{equation*}
        \E\left[\exp\left(\frac{1}{2}\int_0^{t_1}\Vert \theta_t\Vert^2_{\mathcal{H}^{\alpha,\rho}}dt\right)\right]<\infty.
    \end{equation*}
\end{enumerate}
Then we can define a new probability measure $Q$ on $\mathcal{F}_t$ by
\begin{equation*}
    \frac{dQ}{dP} := \exp\left(\int_0^{t_1}\langle\theta_t,dW_t\rangle_{\alpha,\rho}-\frac{1}{2}\int_0^{t_1}\Vert \theta_t\Vert^2_{\mathcal{H}^{\alpha,\rho}}dt\right)
\end{equation*}
and under $Q$, the process
\begin{equation*}
    \widetilde{W}_t:=W_t - \int_0^t\theta_sds
\end{equation*}
is a centered Gaussian process over $\mathcal{H}^{\alpha,\rho}$. Formally, for $x\in M$, 
\begin{align*}
\dot{\widetilde{W}}(t,x)=\dot{W}(t,x)-\theta(t,x),
\end{align*}
which remains white in time and colored in space under $Q$. For a deterministic, uniformly bounded $\theta_t$ with $\vert\theta_t\vert<B<\infty$, the integrability and Novikov's condition reduce to verifying
\begin{equation}
\begin{split}
\label{novi}
\int_0^{t_1}\Vert \theta_t\Vert^2_{\mathcal{H}^{\alpha,\rho}}dt&=\int_0^{t_1}\int_{M^2}\theta(t,x)G_{\alpha,\rho}(x,y)\theta(t,y)dxdydt\\
&=\int_0^{t_1}\frac{\rho}{m_0}\left(\int_M\theta(t,x)dx\right)^2 + \sum_{n=1}^\infty\lambda_n^{\alpha}\left(\int_M\theta(t,x)\phi_n(x)dx\right)^2dt\\
&\leq C{t_1}B^2<+\infty.
\end{split}
\end{equation}
We choose $\theta(t,x)=-\frac{P_t(u_0)(x)}{t_1\sigma(t,x)}$. Then equation \eqref{sol} can be rewritten as
\begin{align*}
u(t,x)&=P_t(u_0)(x)+\int_{[0,t]\times M}P_{t-s}(x,y)\sigma(s,y)\left[\widetilde{W}(dsdy)-\frac{P_s(u_0)(y)}{t_1\sigma(s,y)}dsdy\right]\\
&=P_t(u_0)(x)-\frac{tP_t(u_0)(x)}{t_1}+\int_{[0,t]\times M}P_{t-s}(x,y)\sigma(s,y)\widetilde{W}(dsdy)\\
&=\left(1-\frac{t}{t_1}\right)P_t(u_0)(x)+\int_{[0,t]\times M}P_{t-s}(x,y)\sigma(s,y)\widetilde{W}(dsdy).
\end{align*}
The first term vanishes at time $t_1$, and by \eqref{u0convolute},
\begin{equation}
\label{initialbound}
\left|\left(1-\frac{t}{t_1}\right)P_t(u_0)(x)\right|\leq \frac{\mathcal{C}_3}{3}t_1^{h/4}.
\end{equation}

We define
\[\widetilde{N}(t,x):=\int_{[0,t]\times M}P_{t-s}(x,y)\sigma(s,y)\widetilde{W}(dyds).\]
Although $\widetilde{W}$ does not have the same covariance as $W$ under $P$, it is Gaussian under $Q$, and Lemma \ref{larged} applies as
\begin{equation}
    \begin{split}
    \label{Q_bound}
        Q\left(\sup_{\substack{0\leq t\leq c_0\varepsilon^4\\ x\in B_M(p,\sqrt{c_0}\varepsilon^2)}}\vert \widetilde{N}(t,x)\vert>\frac{\mathcal{C}_3}{6}t_1^{h/4}\right)&=Q\left(\sup_{\substack{0\leq t\leq \left(c_0^{1/4}\varepsilon\right)^4\\ x\in B_M(p,\sqrt{c_0}\varepsilon^2)}}\vert \widetilde{N}(t,x)\vert>\frac{\mathcal{C}_3}{6}t_1^{h/4}\right)\\
&\leq C_5\exp\left(-\frac{\mathcal{C}_3^2C_6}{36\mathcal{C}_2^2}\right)<1,
    \end{split}
\end{equation}
where $\gamma=1$ in Lemma \ref{larged}, and the last inequality follows from the definition of $\mathcal C_3$ in \eqref{c3}.

We are now in a position to apply the Gaussian correlation lemma (Lemma \ref{Gaussiancorr}) in order to bound the probability of the event under the measure $Q$. For any $\theta>0$, we define
\begin{equation*}
A_k(\theta):=\left\lbrace\omega:\sup_{(t,x)\in[0,t_1]\times U_k}\vert \widetilde{N}(t,x)(\omega)\vert\leq\theta\right\rbrace,
\end{equation*}
where $\{U_k\}_{k=1}^K$ is a finite cover of $M$. Let $D_k\subset[0,t_1]\times U_k$ be countable and dense, and $D_{k,n}\subset D_k$ is an increasing sequence of finite subsets with $\bigcup\limits_n D_{k,n}=D_k$. Define another sequence of sets
\begin{equation*}
A^{(n)}_k(\theta):=\left\lbrace\omega:\sup_{(t,x)\in D_{k,n}}\vert \widetilde{N}(t,x)(\omega)\vert\leq\theta\right\rbrace.
\end{equation*}
Then $A_k^{(n)}(\theta)$ is a decreasing sequence of events which converges to $A_k(\theta)$ almost surely given that $\widetilde{N}$ has a.s. continuous paths, and therefore
\begin{equation*}
\lim_{n\to\infty}Q\left(\bigcap_{k=1}^KA^{(n)}_k(\theta)\right)=Q\left(\bigcap_{n=1}^\infty\bigcap_{k=1}^KA^{(n)}_k(\theta)\right)=Q\left(\bigcap_{k=1}^K\bigcap_{n=1}^\infty A^{(n)}_k(\theta)\right)= Q\left(\bigcap_{k=1}^KA_k(\theta)\right)
\end{equation*}
and
\begin{equation*}
\lim_{n\to\infty}\prod_{k=1}^KQ\left(A_k^{(n)}(\theta)\right)=\prod_{k=1}^K\lim_{n\to\infty}Q\left(A_k^{(n)}(\theta)\right)=\prod_{k=1}^KQ\left(A_k(\theta)\right),
\end{equation*}
because $K$ is finite and $\prod_{k=1}^Kx_k$ is continuous on $[0,1]^K$. Each $A_k^{(n)}(\theta)$ is a convex, symmetric subset of $\R^{|D_{k,n}|}$, so Lemma \ref{Gaussiancorr} yields
\begin{equation*}
Q\left(\bigcap_{k=1}^KA^{(n)}_{k}(\theta)\right)\geq\prod_{k=1}^KQ\left(A^{(n)}_k(\theta)\right).
\end{equation*}
Taking limits as $n\to\infty$ on both sides gives
\begin{equation}
\label{Gaussiancorrmani}
Q\left(\bigcap_{k=1}^KA_k(\theta)\right)\geq\prod_{k=1}^KQ(A_k(\theta)).
\end{equation}

By \eqref{Dsequence}, \eqref{initialbound}, \eqref{Q_bound}, and \eqref{Gaussiancorrmani},
\begin{equation}
\label{Q}
\begin{split}
Q(D_0)&\geq Q\left(\sup_{\substack{0\leq t\leq t_1\\ x\in M}}\vert \widetilde{N}(t,x)\vert\leq\frac{\mathcal{C}_3}{6}t_1^{h/4}\right)\\
&\geq Q\left(\sup_{\substack{0\leq t\leq t_1\\ x\in B_M(x,\sqrt{c_0}\varepsilon^2)}}\vert \widetilde{N}(t,x)\vert\leq\frac{\mathcal{C}_3}{6}t_1^{h/4}\right)^{\left(\frac{C}{\sqrt{c_0}\varepsilon^2}\right)^d}\\
&\geq \left[1-C_5\exp\left(-\frac{\mathcal{C}_3^2C_6}{36\mathcal{C}_2^2}\right)\right]^{\left(\frac{C}{\sqrt{c_0}\varepsilon^2}\right)^d}.
\end{split}
\end{equation}
Since $\frac{dQ}{dP}$ is a Radon-Nikodym derivative,
\begin{equation}
\label{radon}
1=\E\left[\frac{dQ}{dP}\right]
\end{equation}
and, for a deterministic $\theta_t$, we can estimate the second moment of the Radon-Nikodym derivative by replacing $\theta_t$ with $2\theta_t$,
\begin{equation}
\label{radonnikodym}
\E\left[\left(\frac{dQ}{dP}\right)^2\right]=\exp\left(\int_0^{t_1}\Vert \theta_t\Vert^2_{\mathcal{H}^{\alpha,\rho}}dt\right)\leq \exp\left(Ct_1^{h/2-1}\right)
\end{equation}
where the last inequality comes from \eqref{novi} and \eqref{radon}, and by Cauchy-Schwarz,
\[Q(D_0)\leq \sqrt{\E\left[\left(\frac{dQ}{dP}\right)^2\right]}\cdot\sqrt{P(D_0)}.\]
Combining \eqref{Q} and \eqref{radonnikodym}, we can bound the probability of event $D_0$ under measure $P$ as
\begin{equation}
\label{probD}
\begin{split}
P(D_0)&\geq \exp\left(-Ct_1^{h/2-1}\right)\exp\left(\frac{C'}{c_0^{d/2}\varepsilon^{2d}}\ln\left[1-C_5\exp\left(-\frac{\mathcal{C}_3^2C_6}{36\mathcal{C}_2^2}\right)\right]\right)\\
&\geq \exp\left(-Ct_1^{h/2-1}-C't_1^{-d/2}\right)\\
&\geq C\exp\left(-C't_1^{-d/2}\right),
\end{split}
\end{equation}
since $h=\min(2\alpha-d+2,1)$ and the term $t_1^{-d/2}$ dominates as $\varepsilon\downarrow0$.

\begin{remark}
    This reflects a metric entropy effect: enforcing a uniform smallness event over space–time incurs a cost proportional to the number of essentially independent spatial blocks. The scaling is sharp for sup-norm events of Gaussian fields with spatial regularity $h/2$.
\end{remark}

\subsection*{Non-deterministic $\sigma$}
We now treat the case of random $\sigma(t,x,u)$ via a freezing argument. One can decompose
\[u(t,x)=u_g(t,x)+M(t,x),\]
where $u_g$ solves
\[\partial_t u_g(t,x)=\frac{1}{2}\Delta_M u_g(t,x)+\sigma(t,x,u_0(x))\dot{W}(t,x)\]
and 
\[M(t,x)=\int_{[0,t]\times M}P_{t-s}(x,y)[\sigma(s,y,u(s,y))-\sigma(s,y,u_0(y))]W(dsdy).\]
The process $u_g$ is Gaussian, and under the assumption $\vert u_0(x)\vert\leq \frac{\mathcal{C}_3}{3}t_1^{h/4}$, we define
\[\widetilde{D}_0=\left\lbrace\vert u_g(t_{1},x)\vert\leq \frac{\mathcal{C}_3}{6}t_1^{h/4},\text{and}~\vert u_g(t,x)\vert\leq\frac{2\mathcal{C}_3}{3}t_1^{h/4}~\forall t\in[0,t_{1}],x\in M\right\rbrace.\]
By \eqref{probD}, we acquire
\begin{equation}
\label{probD0}
P\left(\widetilde{D}_0\right)\geq C\exp\left(-C't_1^{-d/2}\right).
\end{equation}

Now we define the stopping time
\[\tau=\inf\left\lbrace t>0:\vert u(t,x)-u_0(x)\vert>t_1^{d/4}\text{~for some $x\in M$}\right\rbrace,\]
with $\tau=+\infty$ if the set is empty, and set
\[\widetilde{M}(t,x)=\int_{[0,t]\times M}P_{t-s}(x,y)[\sigma(s,y,u(s\wedge \tau,y))-\sigma(s,y,u_0(y))]W(dsdy).\]
On $\{\tau>t_1\}$, we have $M(t,x)=\widetilde M(t,x)$ for all $t\le t_1$. Therefore,
\begin{equation}
\label{E0bound}
\begin{split}
P(E_{0})&\geq P\left(\widetilde{D}_0\bigcap \left\lbrace \sup_{\substack{0\leq t\leq t_1\\ x\in M}}\vert M(t,x)\vert\leq\frac{\mathcal{C}_3}{6}t_1^{h/4}\right\rbrace\right)\\
&\geq P\left(\widetilde{D}_0\bigcap \left\lbrace \sup_{\substack{0\leq t\leq t_1\\ x\in M}}\vert M(t,x)\vert\leq\frac{\mathcal{C}_3}{6}t_1^{h/4}\right\rbrace\bigcap\{\tau>t_1\}\right)\\
&\geq P(\widetilde{D}_0)-P\left( \sup_{\substack{0\leq t\leq t_1\\ x\in M}}\vert \widetilde{M}(t,x)\vert>\frac{\mathcal{C}_3}{6}t_1^{h/4}\right)
\end{split}
\end{equation}
with $\vert u(t,x)-u_0(x)\vert\leq t_1^{d/4}$ for all $t\in[0,t_1]$ and $x\in M$. Applying Remark \ref{largeremark} to $\widetilde M$ and using a union bound as in \eqref{probB}, we obtain
\begin{equation}
\label{probDtilde}
P\left( \sup_{\substack{0\leq t\leq t_1\\ x\in M}}\vert \widetilde{M}(t,x)\vert>\frac{\mathcal{C}_3}{6}t_1^{h/4}\right)\leq \frac{C_5}{(c_0\varepsilon^4)^{d/2}}\exp\left(-\frac{C_6}{\mathcal{D}^2 t_1^{d/2}}\right).
\end{equation}
Combining \eqref{probD0}, \eqref{E0bound}, and \eqref{probDtilde}, we conclude that there exists a $\mathcal{D}_0>0$ such that for all $0<\mathcal{D}<\mathcal{D}_0$,
\begin{equation*}
\begin{split}
P(E_{0})&\geq C\exp\left(-C't_1^{-d/2}\right)-\frac{C_5}{t_1^{d/2}}\exp\left(-\frac{C_6}{\mathcal{D}^2 t_1^{d/2}}\right)\\
&\geq \textbf{C}_6\exp\left(-\frac{\textbf{C}_7}{t_1^{d/2}}\right).
\end{split}
\end{equation*}
This establishes \eqref{E0} and completes the proof of Proposition \ref{prop}(b).

\bibliographystyle{alpha}
\bibliography{manifold_small_ball}

\end{document}